\newtheorem{definition}{Definition}
\newtheorem{assumption}{Assumption}
\newtheorem{observation}{Observation}
\newtheorem{corollary}{Corollary}
\newtheorem{proposition}{Proposition}
\newtheorem{theorem}{Theorem}
\newtheorem{remark}{Remark}
\newtheorem{lemma}{Lemma}
\def\mb{\mathbb}
\renewcommand{\tagform@}[1]{\textup{(\textcolor{red!50!black}{#1})}}
\newcommand{\indicate}[1]{\mathds{1}\small{[#1]}}
\newcommand*\diff{\mathop{}\!\mathrm{d}}
\newcommand\redout{\bgroup\markoverwith{\textcolor{red}{\rule[.5ex]{2pt}{0.4pt}}}\ULon}
\newcommand{\cmnt}[1]{\ignorespaces}
\title{Distributionally and Adversarially Robust Logistic Regression via Intersecting Wasserstein Balls}
\author[1]{\href{mailto:<a.selvi19@imperial.ac.uk>?Subject=Your UAI 2025 paper}{Aras Selvi}{}}
\author[2]{\href{mailto:<eleonora.kreacic@jpmorgan.com>?Subject=Your UAI 2025 paper}{Eleonora Krea{\v{c}}i{\'c}{}}{}}
\author[2]{Mohsen Ghassemi}
\author[2]{Vamsi K. Potluru}
\author[2]{Tucker Balch}
\author[2]{Manuela Veloso}
\affil[1]{%
    Imperial Business School, Imperial College London
}
\affil[2]{%
    JP Morgan AI Research
}
\begin{document}
\maketitle

\begin{abstract}
  Adversarially robust optimization (ARO) has emerged as the \emph{de facto} standard for training models that hedge against adversarial attacks in the test stage. While these models are robust against adversarial attacks, they tend to suffer severely from overfitting. To address this issue, some successful methods replace the empirical distribution in the training stage with alternatives including \emph{(i)} a worst-case distribution residing in an ambiguity set, resulting in a distributionally robust (DR) counterpart of ARO; \emph{(ii)} a mixture of the empirical distribution with a distribution induced by an auxiliary (\emph{e.g.}, synthetic, external, out-of-domain) dataset. Inspired by the former, we study the Wasserstein DR counterpart of ARO for logistic regression and show it admits a tractable convex optimization reformulation. Adopting the latter setting, we revise the DR approach by intersecting its ambiguity set with another ambiguity set built using the auxiliary dataset, which offers a significant improvement whenever the Wasserstein distance between the data generating and auxiliary distributions can be estimated. We study the underlying optimization problem, develop efficient solution algorithms, and demonstrate that the proposed method outperforms benchmark approaches on standard datasets. 
\end{abstract}
\section{INTRODUCTION}
Supervised learning traditionally involves access to a training dataset whose instances are assumed to be independently sampled from a true data-generating distribution~\citep{bishop2006pattern, hastie2009elements}. Optimizing an expected loss for the empirical distribution constructed from such a training set, also known as \textit{empirical risk minimization} (ERM), enjoys several desirable properties in relatively generic settings, including convergence to the true risk minimization problem as the number of training samples increases~\citep[Chapter 2]{vapnik1999nature}. In real-world applications, however, various challenges, such as data scarcity and the existence of adversarial attacks, lead to deteriorated out-of-sample performance for models trained via ERM.

One of the key limitations of ERM, particularly as it is designed to minimize an expected loss for the empirical distribution, emerges from the finite nature of data in practice. This leads ERM to suffer from the `optimism bias', also known as overfitting~\citep{murphy2022}, or the optimizer's curse~\citep{demiguel2009portfolio,smith2006optimizer}, causing deteriorated out-of-sample performance. A popular approach to prevent this phenomenon, \textit{distributionally robust optimization} (DRO; \citealt{DY10:distr_rob_opt}), optimizes the expected loss for the worst-case distribution residing within a pre-specified ambiguity set.

Another key challenge faced by ERM in practice is adversarial attacks, where an adversary perturbs the observed features during the testing or deployment phase~\citep{advattackpaper,goodfellow2014explaining}, also known as evasion corruption at test time~\citep{biggio2013evasion}. For neural networks, the paradigm of \textit{adversarial training} (AT; \citealt{madry2017towards}) is thus designed to provide adversarial robustness by simulating such attacks in the training stage. Several successful variants of AT, specialized to different losses and attacks, have been proposed in the literature to achieve adversarial robustness without significantly reducing performance on training sets~\citep{shafahi2019adversarial,zhang2019theoretically,gao2019convergence,PangLYZY22}. Some studies \citep{uesato2018adversarial,carlini2019evaluating,wu2020adversarial} investigate the adversarial robustness guarantees of various training algorithms, leading to a research direction focused on heuristic improvements to such models (\textit{e.g.}, \citealt{rade2022reducing}). Our work aligns with another recent line of research \citep{xing2022artificially,bennouna2023certified} on \textit{adversarially robust optimization} (ARO), which constrains ERM to guarantee an \emph{exact}, pre-specified level of adversarial robustness while maximizing training accuracy.

Recently, it has been observed that the two aforementioned notions of robustness can be at odds, as adversarially robust (AR) models suffer from severe overfitting (\textit{robust overfitting}; \citealt{raghunathan2019adversarial,yu2022understanding,li2023understanding}). Indeed, it is observed that robust overfitting is even more severe than traditional overfitting~\citep{rice2020overfitting}. To this end, some works address robust overfitting by revisiting AT algorithms and adding adjustments for better generalization~\citep{chen2020robust,li2023clean}. In a recent work,~\citet[Thm 3.2]{bennouna2023certified} decompose the error gap of robust overfitting into the statistical error of estimating the true data-generating distribution via the empirical distribution and an adversarial error resulting from the adversarial attacks, hence proposing the simultaneous adoption of DRO and ARO.

In this work, we study logistic regression (LR) for binary classification that is adversarially robust against $\ell_p$-attacks~\citep{croce2020robustbench}. To address robust overfitting faced by the adversarially robust LR model, we employ a DRO approach where distributional ambiguity is modeled with the type-1 Wasserstein metric. We base our work on an observation that the worst-case logistic loss under adversarial attacks can be represented as a Lipschitz continuous and convex loss function. This allows us to use existing Wasserstein DRO machinery for Lipschitz losses, and derive an \textit{exact} reformulation of the Wasserstein DR counterpart of adversarially robust LR as a tractable convex problem. 

Our main contribution lies in reducing the size of the Wasserstein ambiguity set in the DRO problem mentioned above, in order to create a less conservative problem while preserving the same distributional robustness guarantees. To accomplish this, we draw inspiration from recent work on ARO that leverages auxiliary datasets~(\textit{e.g.}, \citealt{gowal2021improving,xing2022artificially}) and revise our DRO problem by intersecting its ambiguity set with another ambiguity set constructed using an auxiliary dataset. Examples of auxiliary data include synthetic data generated from a generative model (\textit{e.g.}, privacy-preserving data release), data in the presence of distributional shifts (\textit{e.g.}, different time periods/regions), noisy data (\textit{e.g.}, measurement errors), or out-of-domain data (\textit{e.g.}, different source); any auxiliary dataset is viable as long as its instances are sampled independently from an underlying data-generating distribution whose Wasserstein distance to the true data-generating distribution is known or can be estimated. Figure~\ref{figure:balls} illustrates our framework.

\begin{figure}[!t]
    \centering
    \resizebox{1\linewidth}{!}{
    \begin{tikzpicture}
    \draw[dashed, line width=1.5pt] (0,0) circle (2);
    \draw[decorate, decoration={brace, amplitude=10pt, mirror}, line width = 0.8pt] (-2, 2) -- (-2,-2) node[midway, left=12pt] {\LARGE $\mathfrak{B}_{\varepsilon}(\mathbb{P}_N)$};

    \draw[dashed, line width=1.5pt, color = red] (3,-1) circle (2.5);
    \draw[decorate, decoration={brace, amplitude=10pt}, line width = 0.8pt, color = red] (5.5, 1.5) -- (5.5,-3.5) node[midway, right=12pt] {{\LARGE $\mathfrak{B}_{\widehat{\varepsilon}}(\widehat{\mathbb{P}}_{\widehat{N}})$}};    
    \begin{scope}
      \clip (0,0) circle (2);
      \fill[blue!30] (3,-1) circle (2.5);
    \end{scope}

    \fill (0,0) circle (2pt) node (pointA) {};
    
    \node at (0,0.3) {\large $\mathbb{P}_N$};

    \fill (1,-1.2) circle (2pt) node (pointB) {};
    \node at (0.9,-1.5) {\large $\mathbb{P}^0$};

    \fill (3,-1) circle (2pt) node (pointC) {};
    \node at (3.4,-0.8) {\large $\widehat{\mathbb{P}}_{\widehat{N}}$};

    \fill (2.2,-0.9) circle (2pt) node (pointD) {};
    \node at (2.2,-0.55) {\large $\widehat{\mathbb{P}}$};

    \fill (1.2,-0.2) circle (2pt) node (pointE) {};
    \node at (1.4,0.1) {\large $\mathbb{Q}_{\mathrm{mix}}$};
    
    
    \draw[line width = 1pt,color=darkgray] (pointA) -- node[midway, sloped, below, color = darkgray] { \textbf{(1)}} (pointB);
    \draw[line width = 1pt,color=darkgray] (pointC) -- node[midway, sloped, below, color = darkgray] { \textbf{(2)}} (pointD);
    \draw[line width = 1pt,color=darkgray] (pointD) -- node[midway, sloped, below, color = darkgray] { \textbf{(3)}} (pointB);

    \draw[<->, line width =0.8pt, color = darkgray] (pointA) -- (-1.9,0) node[midway, above] {\large $\varepsilon$};
    \draw[<->, line width =0.8pt, color = darkgray] (pointC) -- (3,-3.5) node[midway, right] {\large $\widehat{\varepsilon}$};
\end{tikzpicture}}
\caption{\textit{Traditional ARO optimizes the expected adversarial loss over the empirical distribution $\mathbb{P}_N$ constructed from $N$ i.i.d.\@ samples of the (unknown) true data-generating distribution $\mathbb{P}^0$. Replacing $\mathbb{P}_N$ with a worst-case distribution in the ball $\mathfrak{B}_{\varepsilon}(\mathbb{P}_N)$ gives us its DR counterpart. To reduce the size of this ball, we intersect it with another ball $\mathfrak{B}_{\widehat{\varepsilon}}(\widehat{P}_{\widehat{N}})$ while ensuring $\mathbb{P}^0$ is still included with high confidence. The latter ball is centered at an empirical distribution $\widehat{\mathbb{P}}_{N}$ constructed from $\widehat{N}$ i.i.d.\@ samples of some auxiliary distribution $\widehat{\mathbb{P}}$. Recent works using auxiliary data in ARO propose optimizing the expected adversarial loss over a mixture $\mathbb{Q}_\mathrm{mix}$ of $\mathbb{P}_N$ and $\widehat{P}_{\widehat{N}}$; we show that this distribution resides in $\mathfrak{B}_{\varepsilon}({\mathbb{P}}_N) \cap \mathfrak{B}_{\widehat \varepsilon}({\widehat{\mathbb{P}}}_{\widehat{N}})$ under some conditions.}}
    \label{figure:balls}
\end{figure}

The paper unfolds as follows. In Section~\ref{section:related}, we review related literature on DRO and ARO, with a focus on their interactions. We examine the use of auxiliary data in ARO and the intersection of Wasserstein balls in DRO. We discuss open questions for LR to motivate our loss function choice in this work. Section~\ref{section:prelim} gives preliminaries on ERM, ARO, and type-1 Wasserstein DRO. In Section~\ref{sectionDRO}, we discuss that the adversarial logistic loss can be reformulated as a Lipschitz convex function, enabling the use of Wasserstein DRO machinery for Lipschitz losses. Our main contribution (\textit{cf.}~Figure~\ref{figure:balls}) is in Section~\ref{sect_reducing_conserv}, where we provide an explicit reformulation of the distributionally and adversarially robust LR problem over the intersection of two Wasserstein balls, prove the NP-hardness of this problem, and derive a convex relaxation of it. Our work is mainly on \textit{optimization} where we focus on how to solve the underlying problems upon cross-validating Wasserstein ball radii, however, in Section~\ref{sec:radii} we discuss some preliminary statistical approaches to set such radii. We close the paper with numerical experiments on standard benchmark datasets in Section~\ref{section:experiments}. We borrow the standard notation in DR machine learning, which is elaborated on in our Appendices. 

\section{RELATED WORK} \label{section:related}
\textbf{Auxiliary data in ARO.} The use of auxiliary data appears in the ARO literature. In particular, it is shown that additional unlabeled data sampled from the same~\citep{carmon2022unlabeled,xing2022unlabeled} or different~\citep{deng2021improving} data-generating distributions could provide adversarial robustness. \cite{sehwag2022robust} show that adversarial robustness can be certified even when it is provided for a synthetic dataset as long as the distance between its generator and the true data-generating distribution can be quantified. \cite{gowal2021improving} and \cite{xing2022artificially} propose optimizing a weighted combination of ARO over empirical and synthetic datasets. We show that the latter approach can be recovered by our model. 

\textbf{DRO-ARO interactions.} In this work, we optimize ARO against worst-case data-generating distributions residing in an ambiguity set, where the \mbox{type-1} Wasserstein metric is used for distances since it is arguably the most common choice in machine learning (ML) with Lipschitz losses~\citep{shafieezadeh2019regularization,gao2023finite}. In the literature, it is shown that standard ARO is equivalent to the DRO of the original loss function with a type-$\infty$ Wasserstein metric~\citep{staib2017distributionally,khim2018adversarial,pydi2021many,regniez2022a,frank2024existence}. In other words, in the absence of adversarial attacks, training models adversarially with artificial attacks provide some distributional robustness. Hence, our DR ARO approach can be interpreted as optimizing the logistic loss over the worst-case distribution whose $1$-Wasserstein distance is bounded by a pre-specified radius from at least one distribution residing in an $\infty$-Wasserstein ball around the empirical distribution. Conversely, \cite{sinha2017certifying} discuss that while DRO over Wasserstein balls is intractable for generic losses (\textit{e.g.}, neural networks), its Lagrange relaxation resembles ARO and thus ARO yields a certain degree of (relaxed) distributional robustness~\citep{wu2020adversarial,bui2022unified,phan2023global}. Such literature suggests that, when there is no concern about the statistical errors caused by using empirical distributions (\textit{e.g.}, in very high-data regimes), one can train DR models to obtain adversarial robustness guarantees. However, as discussed by~\cite{bennouna2022holistic}, when statistical errors exist, then we need to be simultaneously robust against adversarial attacks and statistical errors. To the best of our knowledge, there have not been works optimizing a pre-specified level of type-$1$ Wasserstein distributional robustness (that hedges against overfitting, \citealt{kuhn2019wasserstein}) and adversarial robustness (that hedges against adversarial attacks, \citealt{goodfellow2014explaining}) \textit{simultaneously}. To our knowledge, the only approach that considers the exact DR counterpart of ARO is proposed by~\cite{bennouna2023certified}, who model distributional ambiguity with $\varphi$-divergences for neural networks.

\textbf{Intersecting ambiguity sets in DRO.} Recent work started to explore the intersection of ambiguity sets for different contexts~\citep{datajoin,wang2024contextual} or different metrics \citep{intersectballs}. Our idea of intersecting Wasserstein balls is originated from the ``Surround, then Intersect'' strategy~\citep[\S 5.2]{taskesen2021sequential} to train linear regression under sequential domain adaptation in a non-adversarial setting (see the work of~\citealt{shafahi2020adversarially} and~\citealt{song2018improving} for robustness in domain adaptation/transfer learning). The aforementioned work focuses on the squared loss function with an ambiguity set using the Wasserstein metric developed for the first and second distributional moments. In a recent study,~\cite{rychener2024wasserstein} generalize most of the previous results and prove that DRO problems over the intersection of two Wasserstein balls admit tractable convex reformulations whenever the loss function is the maximum of concave functions. They also discuss why distributions lying in the intersection of two Wasserstein balls are more natural candidates for the unknown true distribution than those that are Wasserstein barycenters or mixture distributions of the empirical and auxiliary distributions (referred to as heterogeneous data sources; see Example 1, Proposition 2, and Corollary 1).

\textbf{Logistic loss in DRO and ARO.} Our choice of LR aligns with the current directions and open questions in the related literature. In the DRO literature, even in the absence of adversarial attacks, the aforementioned work of~\cite{taskesen2021sequential} on the intersection of Wasserstein ambiguity sets is restricted to linear regression. The authors show that this problem admits a tractable convex optimization reformulation, and their proof relies on the properties of the squared loss. Similarly,~\cite{rychener2024wasserstein} discuss that the logistic loss fails to satisfy the piece-wise concavity assumption and is inherently difficult to optimize over the intersection of Wasserstein balls. We contribute to the DRO literature for adversarial and non-adversarial settings because we show that such a problem would be NP-hard for the logistic loss even without adversarial attacks, and develop specialized approximation techniques. Our problem recovers DR LR~\citep{NIPS2015,selvi2022wasserstein} as a special case in the absence of adversarial attacks and auxiliary data. Answering theoretical challenges posed by logistic regression has been useful in answering more general questions in the DRO literature, such as DR LR~\citep{NIPS2015} leading to DR ML~\citep{shafieezadeh2019regularization} and mixed-feature DR LR~\citep{selvi2022wasserstein} leading to mixed-feature DR Lipschitz ML~\citep{belbasi2023s}. Finally, in the (non-DR) ARO literature, there are recent theory developments on understanding the effect of auxiliary data (\textit{e.g.},~\citealt{xing2022artificially}) specifically for squared and logistic loss functions. 

\paragraph{Single-step adversarial training and single-shot ARO.}
Our work proposes a single-shot convex optimization procedure to train logistic models that are both adversarially and distributionally robust.
Although the terminology may resemble the recent work on \emph{single-step adversarial training} for neural networks \citep{wong2020fast, lin2023eliminating, lin2024layer}, the two approaches operate differently. Single-step adversarial training generates adversarial perturbations using a single gradient computation at each iteration of iterative model training and improves robustness through updates, with performance typically assessed at intermediate checkpoints. In contrast, our method solves a convex optimization problem once to obtain a model that satisfies both forms of robustness by design. To enable tractability, this approach leverages the convexity and Lipschitz continuity of the loss function under adversarial attacks, which hold for the logistic loss function. While single-step adversarial training applies broadly to general classes of models such as neural networks, our framework offers a complementary, optimization-based perspective in the logistic regression model, where structural properties can be fully exploited.

\section{PRELIMINARIES} \label{section:prelim}
We consider a binary classification problem where an instance is modeled as $(\bm{x}, y) \in \Xi := \mathbb{R}^n \times \{-1, +1\}$
and the labels depend on the features via $\mathrm{Prob}[y \mid \bm{x}] \; = \; [1 + \exp(-y \cdot \bm{\beta}^\top \bm{x})]^{-1}$ for some $\bm{\beta} \in \mathbb{R}^n$; its associated loss is the \textit{logloss} $\ell_{\bm{\beta}}(\bm{x}, y) := \log( 1 + \exp{(-y \cdot \bm{\beta}^\top \bm{x})})$. 

\textbf{Empirical risk minimization.} Let $\mathcal{P}(\Xi)$ denote the set of distributions supported on $\Xi$ and $\mathbb{P}^0 \in \mathcal{P}(\Xi)$ denote the true data-generating distribution. One wants to minimize the expected logloss over $\mathbb{P}^0$, that is
\begin{align}\label{ideal}\tag{RM}
    \begin{array}{cl}
        \displaystyle \underset{\bm{\beta}\in \mathbb{R}^n}{\inf} \; \displaystyle \mathbb{E}_{\mathbb{P}^0} [\ell_{\bm{\beta}}(\bm{x}, y)]. 
    \end{array}
\end{align}
In practice, $\mathbb{P}^0$ is hardly ever known, and one resorts to the empirical distribution ${\mathbb{P}}_N = \frac{1}{N} \sum_{i \in [N]} \delta_{\bm{\xi}^i}$ where $\bm{\xi}^i = (\bm{x}^i, y^i), \  i \in [N]$, are i.i.d.\@ samples from $\mathbb{P}^0$ and $\delta_{\bm{\xi}}$
denotes the Dirac distribution supported on $\bm{\xi}$.
The empirical risk minimization (ERM) problem is thus
\begin{align}\label{erm}\tag{ERM}
    \begin{array}{cl}
    \displaystyle \underset{\bm{\beta}\in \mathbb{R}^n}{\inf} \;\mathbb{E}_{{\mathbb{P}}_{N}} [\ell_{\bm{\beta}}(\bm{x}, y)].
    \end{array}
\end{align}

\textbf{Distributionally robust optimization.} To be able to define a distance between distributions, we first define the following feature-label metric on $\Xi$.
\begin{definition}\label{def:feature-label}
    The distance between instances $\bm{\xi} = (\bm{x}, y) \in \Xi$ and $\bm{\xi'} = (\bm{x'}, y') \in \Xi$ for $\kappa \geq 0$ and $q \geq 1$ is
    \begin{align*}
        d(\bm{\xi}, \bm{\xi'}) = \lVert \bm{x} - \bm{x'} \rVert_{q} + \kappa \cdot \indicate{y \neq y'}.
    \end{align*}
\end{definition}
Using this metric, we define the Wasserstein distance.
\begin{definition}\label{def:Wasser}
    The type-1 Wasserstein distance between distributions $\mathbb{Q}, \mathbb{Q}' \in \mathcal{P}(\Xi)$ is defined as
    \begin{align*}
        \mathrm{W}(\mathbb{Q}, \mathbb{Q}') =  \underset{\Pi \in \mathcal{C}(\mathbb{Q},\mathbb{Q}')}{\inf} \left\{ \int_{\Xi \times \Xi} 
        d(\bm{\xi}, \bm{\xi'}) \Pi(\diff \bm{\xi}, \diff \bm{\xi'}) \right\},
    \end{align*}
    where $\mathcal{C}(\mathbb{Q},\mathbb{Q}')$ is the set of couplings of $\mathbb{Q}$ and $\mathbb{Q}'$.
\end{definition}
In finite-data settings, the distance between the true data-generating distribution and the empirical distribution is upper-bounded by some $\epsilon>0$. The Wasserstein DRO problem is thus defined as
\begin{align}\label{dro}\tag{DRO}
    \begin{array}{cl}
        \displaystyle \underset{\bm{\beta}\in \mathbb{R}^n}{\inf} \; \displaystyle \underset{\mathbb{Q} \in \mathfrak{B}_{\varepsilon}({\mathbb{P}}_N)}{\sup} \; \mathbb{E}_{\mathbb{Q}} [\ell_{\bm{\beta}}(\bm{x}, y)], 
    \end{array}
\end{align}
where $\mathfrak{B}_{\varepsilon}(\mathbb{P}) := \{\mathbb{Q} \in \mathcal{P}(\Xi) : \mathrm{W}(\mathbb{Q}, \mathbb{P}) \leq \varepsilon \}$ denotes the Wasserstein ball centered at $\mathbb{P} \in \mathcal{P}(\Xi)$ with radius $\varepsilon$. We refer to \cite{med17} and \cite{kuhn2019wasserstein} for the properties of~\ref{dro} and estimating $\varepsilon$.

\paragraph{Adversarially robust optimization.}
    The goal of adversarial robustness is to provide robustness against adversarial attacks~\citep{goodfellow2014explaining}. An adversarial attack, in the widely studied $\ell_p$-noise setting \citep{croce2020robustbench}, perturbs the features of the test instances $(\bm{x}, y)$ by adding additive noise $\bm{z}$ to $\bm{x}$. The adversary chooses the noise vector $\bm{z}$, subject to $\lVert \bm{z} \rVert_{p} \leq \alpha$, so as to maximize the loss $\ell_{\bm{\beta}}(\bm{x}+ \bm{z}, y)$ associated with this perturbed test instance. Therefore, ARO solves the following optimization problem in the training stage to hedge against adversarial perturbations at the test stage:
\begin{align}\label{adv}\tag{ARO}
    \begin{array}{cl}
        \displaystyle \underset{\bm{\beta}\in \mathbb{R}^n}{\inf} \; \displaystyle \mathbb{E}_{{\mathbb{P}}_{N}} [\underset{\bm{z}: \lVert \bm{z} \rVert_p \leq \alpha}{\sup} \{\ell_{\bm{\beta}}(\bm{x} + \bm{z}, y)\} ] .
    \end{array}
\end{align}
\ref{adv} reduces to~\ref{erm} when $\alpha = 0$. Note that~\ref{adv} is identical to feature robust training~\citep{bertsimas2019robust} which is not motivated by adversarial attacks, but by the presence of noisy observations in the training set~\citep{BTEGN09:rob_opt,gorissen2015practical}.

\textbf{DRO-ARO connection.} A connection between ARO and DRO is noted in the literature~(\citealt[Proposition 3.1]{staib2017distributionally}, \citealt[Lemma 22]{khim2018adversarial}, \citealt[Lemma 5.1]{pydi2021many}, \citealt[Proposition 2.1]{regniez2022a}, \citealt[Lemma 3]{frank2024existence}, and \citealt[\S 3]{bennouna2023certified}). Namely, problem~\ref{adv} is equivalent to a DRO problem
\begin{align}\label{literature_problem}
    \begin{array}{cl}
        \displaystyle \underset{\bm{\beta}\in \mathbb{R}^n}{\inf} \; \displaystyle \underset{\mathbb{Q} \in \mathfrak{B}^\infty_{\alpha}({\mathbb{P}}_N)}{\sup} \; \mathbb{E}_{\mathbb{Q}} [\ell_{\bm{\beta}}(\bm{x}, y)],
    \end{array}
\end{align}
where the ambiguity set $\mathfrak{B}^\infty_{\alpha}({\mathbb{P}}_N)$ is a type-$\infty$ Wasserstein ball~\citep{givens1984class} with radius $\alpha$. Hence, in non-adversarial settings,~\ref{adv} provides robustness with respect to the type-$\infty$ Wasserstein distance. In the case of adversarial attacks, it suffers from robust overfitting as discussed earlier. To address this issue, one straightforward approach is to revisit~\eqref{literature_problem} and replace $\alpha$ with some $\alpha' > \alpha$. This approach, however, does not provide improvements for the out-of-sample performance since \textit{(i)} the type-$\infty$ Wasserstein distance employed in problem~\eqref{literature_problem} uses a metric on the feature space, ignoring labels; \textit{(ii)} \mbox{type-$\infty$} Wasserstein distances do not provide strong out-of-sample performances in ML (unlike, \textit{e.g.}, the type-$1$ Wasserstein distance) since the required radii to provide meaningful robustness guarantees are typically too large \citep[\S 1.2.2, and references therein]{bennouna2022holistic}. We thus study the type-$1$ Wasserstein counterpart of~\ref{adv}, which we initiate in the next section.

\section{DISTRIBUTIONALLY AND ADVERSARIALLY ROBUST LR}\label{sectionDRO}
Here we derive the Wasserstein DR counterpart of~\ref{adv} that will set the ground for our main result in the next section. We impose the following assumption.
\begin{assumption}\label{ass:DRO}
    We are given a finite $\varepsilon > 0$ value satisfying $\mathrm{W}(\mathbb{P}^0, \mathbb{P}_N) \leq \varepsilon$.
\end{assumption}
The assumption implies that we know an $\varepsilon > 0$ value satisfying $\mathbb{P}^0 \in \mathfrak{B}_{\varepsilon}(\mathbb{P}_N)$. Typically, however, $\varepsilon$ is either estimated through cross-validation or finite sample statistics, with the assumption then regarded as holding with high confidence (see \S\ref{sec:radii} for a review of related results we can borrow). The distributionally and adversarially robust LR problem is thus:
\begin{align}\label{advdro}\tag{DR-ARO}
\mspace{-4mu}
    \underset{\bm{\beta}\in \mathbb{R}^n}{\inf} \underset{\mathbb{Q} \in \mathfrak{B}_{\varepsilon}({\mathbb{P}}_N)}{\sup} \mathbb{E}_{\mathbb{Q}} [\sup_{\bm{z}: \lVert \bm{z} \rVert_p \leq \alpha} \{\ell_{\bm{\beta}}(\bm{x} + \bm{z}, y)\} ].
\end{align}
By employing a simple duality trick for the inner $\sup$-problem, as commonly applied in robust optimization~\citep{BTEGN09:rob_opt,BdH22:rob_opt}, we can represent~\ref{advdro} as a standard non-adversarial DRO problem with an updated loss function, which we name the \textit{adversarial loss}.

\begin{observation}\label{obs:prelim}
    Problem~\textup{\ref{advdro}} is equivalent to
    \begin{align*}
        \begin{array}{cl}
        \underset{\bm{\beta}\in \mathbb{R}^n}{\inf} \underset{\mathbb{Q} \in \mathfrak{B}_{\varepsilon}({\mathbb{P}}_N)}{\sup} \mathbb{E}_{\mathbb{Q}}  [\ell^{\alpha}_{\bm{\beta}}(\bm{x}, y)],
        \end{array}
    \end{align*}
    where the \emph{adversarial loss} $\ell^{\alpha}_{\bm{\beta}}$ is defined as
    \begin{align*}
    \ell^{\alpha}_{\bm{\beta}}(\bm{x}, y) := \log(1 + \exp(  - y\cdot \bm{\beta}^\top \bm{x} + \alpha \cdot \lVert \bm{\beta} \rVert_{p^\star} )),
    \end{align*}
    for $p^\star$ satisfying $1/p + 1/p^\star = 1$. The univariate representation $L^{\alpha}(z) := \log(1 + \exp(- z +  \alpha \cdot \lVert \bm{\beta} \rVert_{p^\star} ))$ of $\ell^{\alpha}_{\bm{\beta}}$ is convex and has a Lipschitz modulus of $1$.
\end{observation}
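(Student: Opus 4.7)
The plan is to prove the equivalence by exchanging the inner adversarial $\sup$ with the loss-composition, then read off the Lipschitz/convexity properties from the resulting univariate form.

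First I would rewrite the adversarial inner problem. For fixed $(\bm{x}, y, \bm{\beta})$, using that the logistic function $t \mapsto \log(1 + \exp(t))$ is strictly increasing, pull it outside the supremum:
\begin{align*}
    \sup_{\bm{z}:\,\lVert\bm{z}\rVert_p \le \alpha} \ell_{\bm{\beta}}(\bm{x}+\bm{z}, y) \;=\; \log\Bigl(1 + \exp\bigl(-y\bm{\beta}^\top \bm{x} + \sup_{\bm{z}:\,\lVert\bm{z}\rVert_p \le \alpha} (-y\bm{\beta}^\top \bm{z})\bigr)\Bigr).
\end{align*}
Since $y \in \{-1,+1\}$, the substitution $\bm{z} \mapsto -y\bm{z}$ leaves the feasible set invariant, so the inner $\sup$ equals $\sup_{\lVert\bm{z}\rVert_p \le \alpha} \bm{\beta}^\top \bm{z}$, which by the definition of the dual norm equals $\alpha \lVert \bm{\beta}\rVert_{p^\star}$ with $1/p + 1/p^\star = 1$. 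Substituting this into the display above yields exactly $\ell^{\alpha}_{\bm{\beta}}(\bm{x}, y) = \log(1 + \exp(-y\bm{\beta}^\top \bm{x} + \alpha \lVert \bm{\beta}\rVert_{p^\star}))$. Because this identity holds pointwise in $(\bm{x}, y)$ and neither the outer $\inf_{\bm{\beta}}$ nor the DRO $\sup_{\mathbb{Q}}$ depends on $\bm{z}$, I can substitute $\ell^{\alpha}_{\bm{\beta}}$ for the inner $\sup$ inside $\mathbb{E}_{\mathbb{Q}}[\cdot]$, establishing the claimed equivalence.

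For the univariate representation $L^{\alpha}(z) = \log(1 + \exp(-z + \alpha \lVert\bm{\beta}\rVert_{p^\star}))$, convexity follows immediately because the softplus function $s \mapsto \log(1 + \exp(s))$ is convex and $z \mapsto -z + \alpha \lVert\bm{\beta}\rVert_{p^\star}$ is affine (with $\bm{\beta}$ treated as fixed here), so the composition is convex. For the Lipschitz modulus, differentiate:
\begin{align*}
    \frac{\diff L^{\alpha}(z)}{\diff z} \;=\; -\,\frac{\exp(-z + \alpha \lVert\bm{\beta}\rVert_{p^\star})}{1 + \exp(-z + \alpha \lVert\bm{\beta}\rVert_{p^\star})},
\end{align*}
whose absolute value is bounded by $1$ since the sigmoid takes values in $(0,1)$. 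Hence $L^{\alpha}$ is $1$-Lipschitz, completing the claim.

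There is no real obstacle here: the only subtlety is verifying that the $\sup$ can be pulled through the monotone $\log(1+\exp(\cdot))$, and that the dual-norm reformulation is independent of $y \in \{-1,+1\}$. Both steps are standard robust-optimization manipulations, and the Lipschitz/convexity bounds follow from elementary calculus on the softplus.
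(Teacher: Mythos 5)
Your proof follows essentially the same route as the paper's: pull the monotone $\log(1+\exp(\cdot))$ through the adversarial supremum, evaluate the resulting linear maximization over the $\ell_p$-ball via the dual norm (noting $|{-y}|=1$), then inspect the derivative of $L^\alpha$ for convexity and Lipschitz bounds. Both steps are carried out correctly.

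One small gap: you only prove that $|{L^{\alpha}}'(z)| \le 1$, which shows $L^{\alpha}$ is $1$-Lipschitz, but the statement asserts the Lipschitz \emph{modulus} is $1$, i.e.\ $\sup_{z}|{L^{\alpha}}'(z)| = 1$ (this sup is what the paper's notation $\mathrm{Lip}(\cdot)$ refers to, and it matters later since $\mathrm{Lip}(L^\alpha)$ is substituted literally into the Wasserstein-DRO dual constraint in Corollary~\ref{corr:tractable_og}). To close this you would add, as the paper does, that $|{L^{\alpha}}'(z)| = [1 + \exp(z - \alpha\lVert\bm{\beta}\rVert_{p^\star})]^{-1} \to 1$ as $z\to -\infty$, so the supremum of the derivative magnitude is exactly $1$.
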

As a corollary of Observation~\ref{obs:prelim}, we can directly employ the techniques proposed by~\cite{shafieezadeh2019regularization} to dualize the inner $\sup$-problem of~\ref{advdro} and obtain a tractable reformulation.
\begin{corollary}\label{corr:tractable_og}
    Problem \textup{\ref{advdro}} admits the following tractable convex optimization reformulation:
    \begin{align*}
        \begin{array}{cll}
        \underset{\substack{\bm{\beta}, \lambda, \bm{s}}}{\inf}   & \displaystyle \varepsilon \lambda + \dfrac{1}{N} \sum_{i=1}^N s_i  \\[1.2em]
        \mathrm{s.t.}  & \displaystyle \ell_{\bm{\beta}}^\alpha(\bm{x}^i, y^i) \leq s_i & \forall i \in [N] \\[0.6em]
        & \displaystyle \ell_{\bm{\beta}}^\alpha(\bm{x}^i, -y^i) - \lambda \kappa \leq s_i & \forall i \in [N] \\[0.6em]
        & \displaystyle \lVert \bm{\beta} \rVert_{q^\star} \leq \lambda \\[0.6em]
        & \displaystyle \bm{\beta} \in \mathbb{R}^n, \; \lambda \geq 0, \; \bm{s} \in  \mathbb{R}^N_{+},
        \end{array}
    \end{align*}
    for $q^\star$ satisfying $1/q + 1/q^\star = 1$.
\end{corollary}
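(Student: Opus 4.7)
The plan is to obtain the reformulation as a direct specialization of the Wasserstein DRO strong duality for Lipschitz losses developed by \cite{shafieezadeh2019regularization}. The starting point is Observation~\ref{obs:prelim}, which lets us replace \ref{advdro} by the non-adversarial DRO problem
\begin{align*}
\underset{\bm{\beta}}{\inf} \; \underset{\mathbb{Q} \in \mathfrak{B}_{\varepsilon}(\mathbb{P}_N)}{\sup} \; \mathbb{E}_{\mathbb{Q}}\bigl[\ell_{\bm{\beta}}^{\alpha}(\bm{x}, y)\bigr],
\end{align*}
where $\ell_{\bm{\beta}}^{\alpha}(\bm{x}, y) = L^{\alpha}(y \cdot \bm{\beta}^{\top}\bm{x})$ with $L^{\alpha}$ convex and $1$-Lipschitz. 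Hence it suffices to dualize the inner worst-case expectation and verify that the familiar closed-form expressions go through.

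First, I would invoke Kantorovich duality: because $\mathbb{P}_N$ is an $N$-atomic distribution and $\ell_{\bm{\beta}}^{\alpha}$ is upper semicontinuous and grows at most linearly in $\bm{x}$, strong duality applies and the inner supremum equals
\begin{align*}
\underset{\lambda \geq 0}{\inf} \; \lambda \varepsilon + \frac{1}{N} \sum_{i \in [N]} \underset{\bm{\xi} \in \Xi}{\sup} \bigl\{ \ell_{\bm{\beta}}^{\alpha}(\bm{\xi}) - \lambda\, d(\bm{\xi}, \bm{\xi}^i) \bigr\}.
\end{align*}
Next, I would split each inner supremum over $\bm{\xi} = (\bm{x}, y)$ according to whether $y = y^i$ or $y = -y^i$. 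For the same-label case, the perturbation penalty is $\lambda \lVert \bm{x} - \bm{x}^i\rVert_{q}$; because $L^{\alpha}$ is $1$-Lipschitz, H\"older's inequality implies that $\bm{x} \mapsto \ell_{\bm{\beta}}^{\alpha}(\bm{x}, y^i)$ is Lipschitz with modulus $\lVert \bm{\beta}\rVert_{q^{\star}}$ under $\lVert\cdot\rVert_q$, so the supremum equals $\ell_{\bm{\beta}}^{\alpha}(\bm{x}^i, y^i)$ when $\lambda \geq \lVert\bm{\beta}\rVert_{q^{\star}}$ and is $+\infty$ otherwise. The flipped-label case yields, under the same feasibility condition, the value $\ell_{\bm{\beta}}^{\alpha}(\bm{x}^i, -y^i) - \lambda\kappa$.

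Finally, I would introduce epigraphical variables $s_i \geq \max\{\ell_{\bm{\beta}}^{\alpha}(\bm{x}^i, y^i),\, \ell_{\bm{\beta}}^{\alpha}(\bm{x}^i, -y^i) - \lambda\kappa\}$ and the finiteness constraint $\lVert\bm{\beta}\rVert_{q^{\star}} \leq \lambda$; substituting these into the dual objective produces exactly the program in the statement. Convexity is inherited because $\ell_{\bm{\beta}}^{\alpha}(\bm{x}^i, \pm y^i)$ is the composition of the convex log-sum-exp with an affine map in $\bm{\beta}$ plus the convex term $\alpha \lVert\bm{\beta}\rVert_{p^{\star}}$, and the norm constraint on $\bm{\beta}$ is convex.

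The main obstacle is, as usual, the strong-duality step and the precise tracking of which Lipschitz modulus corresponds to which dual norm; but since Observation~\ref{obs:prelim} has already packaged the adversarial contribution $\alpha \lVert \bm{\beta}\rVert_{p^{\star}}$ into a univariate $1$-Lipschitz loss, the Lipschitz bookkeeping reduces to the same one used for non-adversarial DR logistic regression, and no new difficulty arises beyond citing the template in \cite{shafieezadeh2019regularization}.
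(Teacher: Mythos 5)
Your proposal is correct and follows essentially the same route as the paper: both first invoke Observation~\ref{obs:prelim} to rewrite \ref{advdro} as a standard Wasserstein DRO problem for the $1$-Lipschitz convex univariate loss $L^{\alpha}$, and then apply the Lipschitz-loss DRO machinery of \cite{shafieezadeh2019regularization}. The only difference is presentational: the paper cites Theorem~14(ii) of that reference directly and substitutes $\mathrm{Lip}(L^{\alpha})=1$, whereas you re-derive the content of that theorem (Kantorovich duality, label-split, worst-case realization giving $\ell^{\alpha}_{\bm{\beta}}(\bm{x}^i,\pm y^i)$ and the feasibility condition $\lVert\bm{\beta}\rVert_{q^\star}\leq\lambda$) from scratch, which is mathematically equivalent.
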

The constraints of this problem are exponential cone representable (derivation is in the appendices) and for $q \in \{1,2,\infty\}$, the yielding problem can be solved with the exponential cone solver of MOSEK~\citep{mosek2016modeling} in polynomial time~\citep{N18:convex}. 

\section{MAIN RESULT}\label{sect_reducing_conserv}
In \S\ref{sectionDRO} we discussed the traditional DRO setting where we have access to an empirical distribution $\mathbb{P}_N$ constructed from $N$ i.i.d.\@ samples of the true data-generating distribution $\mathbb{P}^0$, and we are given (or we estimate) some $\varepsilon$ so that $\mathbb{P}^0 \in \mathfrak{B}_{\varepsilon}({\mathbb{P}}_N)$. Recently in DRO literature, it became a key focus to study the case where we have access to an additional auxiliary empirical distribution $\widehat{\mathbb{P}}_{\widehat{N}}$ constructed from $\widehat{N}$ i.i.d.\@ samples $\widehat{\bm{\xi}}^j = (\widehat{\bm{x}}^j, \widehat{y}^j), \ j \in [\widehat{N}]$, of some other distribution $\widehat{\mathbb{P}}$; given the increasing availability of useful auxiliary data in the ARO domain, we explore this direction here. We start with the following assumption.
\begin{assumption}\label{ass:strong}
    We are given finite $\varepsilon, \widehat{\varepsilon} > 0$ values satisfying $\mathrm{W}(\mathbb{P}^0, \mathbb{P}_N) \leq \varepsilon$ and $\mathrm{W}(\mathbb{P}^0, \widehat{\mathbb{P}}_{\widehat{N}}) \leq \widehat{\varepsilon}$.
\end{assumption}

The assumption implies that we know $\varepsilon, \widehat{\varepsilon} > 0$ values satisfying $\mathbb{P}^0 \in \mathfrak{B}_{\varepsilon}(\mathbb{P}_N) \cap \mathfrak{B}_{\widehat{\varepsilon}}(\widehat{\mathbb{P}}_{\widehat{N}})$. In practice, this assumption is ensured to hold with high confidence by estimating the $\varepsilon$ and $\widehat{\varepsilon}$ values; methods across various domains which we can adopt are reviewed in \S\ref{sec:radii}. We want to optimize the adversarial loss over the intersection $\mathfrak{B}_{\varepsilon}(\mathbb{P}_N) \cap \mathfrak{B}_{\widehat{\varepsilon}}(\widehat{\mathbb{P}}_{\widehat{N}})$:
\begin{align}\label{synth}\tag{Inter-ARO}
    \underset{\bm{\beta} \in \mathbb{R}^n}{\mathrm{inf}} \underset{\mathbb{Q} \in \mathfrak{B}_{\varepsilon}(\mathbb{P}_{N}) \cap \mathfrak{B}_{\widehat{\varepsilon}}(\widehat{\mathbb{P}}_{\widehat{N}})}{\sup} \mathbb{E}_{\mathbb{Q}}[\ell^{\alpha}_{\bm{\beta}} (\bm{x}, y)].
\end{align}
Note that Assumption~\ref{ass:strong} implies that $\varepsilon$ and $\widehat{\varepsilon}$ guarantee that $\mathfrak{B}_{\varepsilon}(\mathbb{P}_N) \cap \mathfrak{B}_{\widehat{\varepsilon}}(\widehat{\mathbb{P}}_{\widehat{N}})$ is nonempty, and problem~\ref{synth} is thus feasible. This problem is expected to outperform~\ref{advdro} as the ambiguity set is smaller while still including $\mathbb{P}^0$. However, problem~\ref{synth} is challenging to solve even in the absence of adversarial attacks ($\alpha = 0$) as we reviewed in~\S\ref{section:related}. To address this challenge, we first reformulate~\ref{synth} as a semi-infinite optimization problem with finitely many variables.
\begin{proposition}\label{prop:reformulate}
    \textup{\ref{synth}} is equivalent to:
    \begin{align*}
        \begin{array}{cl}
        \underset{\substack{\bm{\beta}, \lambda, \widehat{\lambda} \\ \bm{s}, \widehat{\bm{s}}}}{\inf}     & \displaystyle \varepsilon \lambda + \widehat{\varepsilon} \widehat{\lambda} + \dfrac{1}{N} \sum_{i=1}^N s_i + \dfrac{1}{\widehat{N}} \sum_{j=1}^{\widehat{N}} \widehat{s}_j \\
        \mathrm{s.t.} & \underset{\bm{x} \in \mathbb{R}^n}{\sup} \{\ell^{\alpha}_{\bm{\beta}}(\bm{x}, l) - \lambda \lVert \bm{x}^i - \bm{x} \rVert_q  - \widehat{\lambda} \lVert \widehat{\bm{x}}^j - \bm{x} \rVert_q\}  \\
        & \hfill \leq s_i + \dfrac{\kappa(1 - l y^i)}{2} \lambda + \widehat{s}_j + \dfrac{\kappa(1 -  l \widehat{y}^j)}{2} \widehat{\lambda} \\
        & \hfill \forall (i,j, l) \in [N] \times [\widehat{N}] \times \{ -1, 1 \}\\[1mm]
        & \bm{\beta} \in \mathbb{R}^n, \; \lambda \geq 0, \; \widehat{\lambda} \geq 0, \; \bm{s} \in \mathbb{R}^N_{+}, \; \widehat{\bm{s}} \in \mathbb{R}^{\widehat{N}}_{+}.
        \end{array}
    \end{align*}
\end{proposition}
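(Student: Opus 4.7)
The plan is to dualize the inner worst-case expectation over the intersection $\mathfrak{B}_{\varepsilon}(\mathbb{P}_N) \cap \mathfrak{B}_{\widehat{\varepsilon}}(\widehat{\mathbb{P}}_{\widehat{N}})$ via a joint-coupling lifting. First I would rewrite the inner supremum in its primal form: by the gluing lemma, any feasible $\mathbb{Q}$ together with transport plans $\Pi_1 \in \mathcal{C}(\mathbb{P}_N, \mathbb{Q})$ and $\Pi_2 \in \mathcal{C}(\widehat{\mathbb{P}}_{\widehat{N}}, \mathbb{Q})$ can be lifted to a single measure $\Pi$ on $\Xi^3$ whose first two marginals are $\mathbb{P}_N$ and $\widehat{\mathbb{P}}_{\widehat{N}}$, whose third marginal is $\mathbb{Q}$, and which satisfies $\int d(\bm{\xi}_1,\bm{\xi}_0)\diff\Pi \leq \varepsilon$ and $\int d(\bm{\xi}_2,\bm{\xi}_0)\diff\Pi \leq \widehat{\varepsilon}$. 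Since the first two marginals are discrete, $\Pi$ disintegrates as $\sum_{i,j}\pi_{ij}\,\delta_{\bm{\xi}^i}\otimes\delta_{\widehat{\bm{\xi}}^j}\otimes \mathbb{Q}_{ij}$ with weights $\pi$ in the transportation polytope $\{\pi \geq 0 : \sum_j \pi_{ij} = 1/N,\ \sum_i \pi_{ij} = 1/\widehat{N}\}$ and conditional distributions $\mathbb{Q}_{ij} \in \mathcal{P}(\Xi)$.

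Next I would introduce Lagrange multipliers $\lambda,\widehat{\lambda}\geq 0$ for the two transport budget constraints. For fixed $\lambda,\widehat{\lambda}$, each $\mathbb{Q}_{ij}$ is optimized pointwise, reducing its contribution to $M_{ij}(\lambda,\widehat{\lambda}) := \sup_{\bm{\xi}_0 \in \Xi}\{\ell^{\alpha}_{\bm{\beta}}(\bm{\xi}_0) - \lambda\,d(\bm{\xi}^i,\bm{\xi}_0) - \widehat{\lambda}\,d(\widehat{\bm{\xi}}^j,\bm{\xi}_0)\}$. Splitting $\bm{\xi}_0 = (\bm{x},l)$ across the two label choices $l \in \{-1,+1\}$ and using $\indicate{l\neq y^i} = (1-ly^i)/2$ eliminates the label indicators in the feature-label metric and produces the affine shifts $\kappa(1-ly^i)\lambda/2$ and $\kappa(1-l\widehat{y}^j)\widehat{\lambda}/2$ in $\lambda,\widehat{\lambda}$. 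The remaining sup over $\pi$ on the transportation polytope is a linear program whose LP dual introduces epigraphical variables $s_i/N$ and $\widehat{s}_j/\widehat{N}$ with constraints $s_i + \widehat{s}_j \geq M_{ij}(\lambda,\widehat{\lambda})$; rearranging these yields precisely the semi-infinite inequalities of the proposition. The nonnegativity $\bm{s},\widehat{\bm{s}}\geq \bm{0}$ may be assumed without loss of generality since the shift $(s_i,\widehat{s}_j)\mapsto(s_i+c,\widehat{s}_j-c)$ leaves both the objective and the constraints invariant and $M_{ij} \geq \ell^{\alpha}_{\bm{\beta}}(\bm{\xi}^i) \geq 0$.

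The principal obstacle is justifying strong duality when both Lagrange multipliers are pulled through simultaneously: the classical Kantorovich-based argument of \cite{med17} and \cite{blanchet2019quantifying} covers only a single Wasserstein constraint, whereas we carry two at once. I would discharge this by invoking the strong duality result of \cite{rychener2024wasserstein} for DRO problems over intersections of type-1 Wasserstein balls; alternatively, one can view the lifted problem on $\Pi$ as a generalized moment problem and apply an Isii/Shapiro-type strong duality theorem, where Assumption~\ref{ass:strong} ensures that $\mathbb{P}^0$ is a (strictly) feasible primal point providing the necessary constraint qualification. A secondary technicality is that the supremum defining $M_{ij}$ need not be attained on $\mathbb{R}^n$, so the coupling reduction must preserve a supremum rather than a maximum, which is exactly how the constraint is written in the proposition.
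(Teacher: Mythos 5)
Your proposal takes a genuinely different route from the paper's proof, and the overall architecture is sound. The paper keeps the two transport plans $\Pi \in \mathcal{C}(\mathbb{Q}, \mathbb{P}_N)$ and $\widehat{\Pi} \in \mathcal{C}(\mathbb{Q}, \widehat{\mathbb{P}}_{\widehat{N}})$ as separate objects, introduces auxiliary non-negative measures $\mathbb{A}_{\pm 1}$ to decouple the shared-marginal constraint between them, and writes the algebraic (infinite-dimensional LP) dual of the whole moment problem in one step, appealing to \citet{shapiro2001duality} for strong duality under $\varepsilon, \widehat{\varepsilon} > 0$; the resulting function-valued dual variables $p_{\pm 1}, \widehat{p}_{\pm 1}$ are then eliminated by hand. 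You instead glue the two plans into a single measure on $\Xi^3$, disintegrate along the discrete first two coordinates to expose a transportation-polytope structure in the weights $\pi_{ij}$, and then proceed by two sequential dualizations: a Lagrangian over the two budget constraints (yielding $\lambda, \widehat{\lambda}$) followed by finite LP duality over the transportation polytope (yielding $\bm{s}, \widehat{\bm{s}}$). Your decomposition is conceptually tidier and makes the origin of each dual variable transparent; the price is that you must certify that the sequential dualization is tight, which the paper sidesteps by dualizing everything at once.

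Two steps need repair. First, the inequality $M_{ij}(\lambda, \widehat{\lambda}) \geq \ell^{\alpha}_{\bm{\beta}}(\bm{\xi}^i) \geq 0$ is false: substituting $\bm{\xi}_0 = \bm{\xi}^i$ into the supremum only gives $M_{ij} \geq \ell^{\alpha}_{\bm{\beta}}(\bm{\xi}^i) - \widehat{\lambda}\, d(\widehat{\bm{\xi}}^j, \bm{\xi}^i)$, which can be negative once $\widehat{\lambda}$ is large. Since your shift argument requires $\min_i s_i + \min_j \widehat{s}_j \geq 0$ (to find a single offset $c$ that makes both vectors nonnegative), and you derive this from $M_{ij} \geq 0$, the claimed WLOG nonnegativity of $\bm{s}, \widehat{\bm{s}}$ is not actually established. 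The paper secures nonnegativity by a different device: the ``integrates-to-one'' primal constraints are replaced by inequalities (justified by objective pressure, using $\ell^{\alpha}_{\bm{\beta}} \geq 0$), so the associated dual variables are nonnegative from the outset. Second, invoking the strong duality of \citet{rychener2024wasserstein} is delicate here, because the paper points out that the logistic loss violates the piecewise-concavity assumption behind their tractable reformulations; you would need to check that their abstract duality statement (as opposed to their tractability theorems) covers this case. Your alternative — treating the lifted problem as a generalized moment problem and invoking an Isii/Shapiro-type theorem under the constraint qualification supplied by Assumption~\ref{ass:strong} — is what the paper in fact does, and is the safer path.
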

Even though this problem recovers the tractable problem~\ref{advdro} as $\widehat{\varepsilon} \rightarrow \infty$, it is NP-hard in the finite radius settings. We reformulate~\ref{synth} as an adjustable robust optimization problem \citep{ben2004adjustable,yanikouglu2019survey}, and borrow tools from this literature to obtain the following result.
\begin{proposition}\label{prop:np-hard}
    \textup{\ref{synth}} is equivalent to an adjustable RO problem with $\mathcal{O}(N\cdot \widehat{N})$ two-stage robust constraints, which is NP-hard even when $N = \widehat{N} = 1$.
\end{proposition}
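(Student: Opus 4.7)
The statement has two parts: the adjustable-RO equivalence with $\mathcal{O}(N\widehat{N})$ constraints, and NP-hardness at $N=\widehat{N}=1$. I would build both directly on Proposition~\ref{prop:reformulate}.

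For the adjustable RO equivalence, I would start from the $2N\widehat{N}$ semi-infinite constraints in Proposition~\ref{prop:reformulate}. Each has the form $\sup_{\bm{x}\in\mathbb{R}^n}\{\ell^\alpha_{\bm{\beta}}(\bm{x},l)-\lambda\lVert\bm{x}^i-\bm{x}\rVert_q-\widehat{\lambda}\lVert\widehat{\bm{x}}^j-\bm{x}\rVert_q\}\leq \mathrm{RHS}_{ijl}$. Using the dual-norm identity $\lVert\bm{z}\rVert_q=\max_{\lVert\bm{u}\rVert_{q^\star}\leq 1}\bm{u}^\top\bm{z}$, I would rewrite $-\lambda\lVert\bm{x}^i-\bm{x}\rVert_q=\min_{\lVert\bm{u}\rVert_{q^\star}\leq\lambda}\bm{u}^\top(\bm{x}-\bm{x}^i)$ and analogously for the auxiliary sample. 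Substituting these in and converting the outer $\sup_{\bm{x}}$ into a universal quantifier, the constraint becomes: for every realization of the uncertain parameter $\bm{x}\in\mathbb{R}^n$, there exist recourse vectors $\bm{u}(\bm{x}),\bm{v}(\bm{x})$ in the dual-norm balls of radii $\lambda,\widehat{\lambda}$ satisfying $\ell^\alpha_{\bm{\beta}}(\bm{x},l)+\bm{u}^\top(\bm{x}-\bm{x}^i)+\bm{v}^\top(\bm{x}-\widehat{\bm{x}}^j)\leq \mathrm{RHS}_{ijl}$. This is exactly the defining form of a two-stage adjustable robust constraint, so \ref{synth} is equivalent to an adjustable RO problem with $\mathcal{O}(N\widehat{N})$ such constraints.

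For NP-hardness at $N=\widehat{N}=1$, I would construct a polynomial-time reduction from a canonical NP-hard problem, following the standard templates in the adjustable-RO literature (e.g., \citealt{ben2004adjustable,yanikouglu2019survey}). The structural observation is that, for any fixed first-stage $(\bm{\beta},\lambda,\widehat{\lambda})$, verifying a single adjustable constraint amounts to a convex-plus-concave maximization over $\bm{x}\in\mathbb{R}^n$, where the convex part is the adversarial logistic loss and the concave part is the sum of the two negated norm penalties. I would embed a hard combinatorial instance (e.g., subset-sum or 3-SAT) into the two data points $(\bm{x}^1,y^1),(\widehat{\bm{x}}^1,\widehat{y}^1)$, the Wasserstein radii $\varepsilon,\widehat{\varepsilon}$, and the adversarial budget $\alpha$, so that feasibility of the adjustable constraint at an appropriate first-stage solution is equivalent to the combinatorial instance being a \emph{yes}-instance. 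Because the outer optimization couples the two constraints (one per $l\in\{-1,+1\}$) through a single $(\bm{\beta},\lambda,\widehat{\lambda})$, the combinatorial structure must be embedded jointly across both sign choices.

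The main obstacle will be the reduction itself: the first part is a routine dualization, whereas the second part requires a careful geometric encoding so that the strictly convex logistic loss, combined with the piecewise-linear norm penalties, faithfully represents a discrete combinatorial problem without continuous relaxation gaps. A cleaner fallback, which I would pursue if the direct encoding proves unwieldy, is to first take the non-adversarial limit $\alpha=0$ and let $\lVert\bm{\beta}\rVert$ grow so that $\log(1+\exp(\cdot))$ approaches its affine asymptote; the adjustable constraint then becomes piecewise linear, and one can directly invoke the established NP-hardness of two-stage polyhedral adjustable linear RO from the cited works.
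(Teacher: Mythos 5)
Your first half is a legitimate alternative route to an adjustable-RO form, but it is not the paper's route, and the difference matters for the second half. The paper does not dualize the two norm penalties directly. It first proves a technical DC-duality lemma (Lemma~\ref{lemma:main}, built on Toland's duality and infimal convolutions) that conjugates the entire term $\sup_{\bm{x}}\{L(\bm{\omega}^\top\bm{x})-\lambda\lVert\bm{a}-\bm{x}\rVert_q-\widehat{\lambda}\lVert\widehat{\bm{a}}-\bm{x}\rVert_q\}$ into $\sup_{\theta\in\mathrm{dom}(L^*)}\inf_{\bm{z}}\{\dots\}$, yielding adjustable constraints with a \emph{scalar} uncertain parameter $\theta$ (ranging over the compact domain of the conjugate) and an adjustable vector $\bm{z}$. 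Crucially, this produces bilinear couplings $\theta\cdot\bm{z}^\top(\widehat{\bm{x}}^j-\bm{x}^i)$ in the objective and $\lvert\theta\rvert\cdot\lVert\bm{\beta}-\bm{z}\rVert_{q^\star}\leq\lambda$, $\lvert\theta\rvert\cdot\lVert\bm{z}\rVert_{q^\star}\leq\widehat{\lambda}$ in the recourse constraints, i.e.\ a two-stage robust problem with \emph{random recourse}. Your formulation instead keeps $\bm{x}\in\mathbb{R}^n$ (an unbounded, high-dimensional uncertainty set) as the universal quantifier, keeps the logistic loss intact, and has recourse sets $\lVert\bm{u}\rVert_{q^\star}\leq\lambda$, $\lVert\bm{v}\rVert_{q^\star}\leq\widehat{\lambda}$ that do \emph{not} depend on the uncertainty. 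That is a structurally different adjustable-RO problem, and it is not the one the paper's NP-hardness citations (Subramanyam et al.\ and Guslitser) address.

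The genuine gap is in the NP-hardness argument. Your primary plan (a bespoke reduction from subset-sum or 3-SAT) is only sketched and you acknowledge it would be delicate; the paper does not do this. Your fallback is flawed: taking $\alpha=0$ and ``letting $\lVert\bm{\beta}\rVert$ grow so the log-sum-exp approaches its affine asymptote'' does not establish a reduction, because $\bm{\beta}$ is an optimization variable, not a parameter you may send to infinity, and a limiting approximation of the loss is not a polynomial-time reduction that preserves yes/no answers. The paper's actual argument is cleaner and does not rely on any such limit: with $N=\widehat{N}=1$ and $\kappa=0$, it folds $s+\widehat{s}$ into the objective, applies Lemma~\ref{lemma:main}, and exhibits an $\inf\text{-}\sup\text{-}\inf$ problem in which the uncertain $\theta$ multiplies both the adjustable variable (in the inner objective) and the decision $\bm{\beta}$ (in the recourse constraint $\lvert\theta\rvert\,\lVert\bm{\beta}-\bm{z}\rVert_{q^\star}\leq\lambda$, which even at $q=1$ yields $-\lambda\leq\theta\bm{\beta}-\theta\bm{z}\leq\lambda$). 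This is exactly the random-recourse two-stage robust LP structure proved NP-hard by Guslitser, even with a constant objective. So the hardness comes ``for free'' from the shape of the reformulation, with no combinatorial encoding needed — but only once one has derived the $\theta$/$\bm{z}$ form via the lemma, which your proposal bypasses.
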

The adjustable RO literature has developed a rich arsenal of relaxations that can be leveraged for~\ref{synth}.
We adopt the `static relaxation technique'~\citep{bertsimas2015tight} to restrict the feasible region of~\ref{synth} and obtain a tractable approximation.
\begin{theorem}[main]\label{thm:main}
    The following convex optimization problem is a feasible relaxation of~\textup{\ref{synth}}:
    {\allowdisplaybreaks
    \begin{align}\label{safe}\tag{\textup{Inter-ARO$^\star$}}
        \begin{array}{cl}
        \underset{\substack{\bm{\beta}, \lambda, \widehat{\lambda}, \bm{s}, \widehat{\bm{s}} \\ \bm{z}_{ij}^+, \bm{z}_{ij}^-}}{\inf} & \displaystyle \varepsilon \lambda + \widehat{\varepsilon} \widehat{\lambda} + \frac{1}{N} \sum_{i=1}^N s_i + \frac{1}{\widehat{N}} \sum_{j=1}^{\widehat{N}} \widehat{s}_j \\
        \mathrm{s.t.} & L^{\alpha}(l\cdot\bm{\beta}^\top \bm{x}^i +  \bm{z}_{ij}^{l\top}(\widehat{\bm{x}}^j - \bm{x}^i)) \\
        & \hfill \leq  s_i + \dfrac{\kappa(1 - ly^i)}{2} \lambda + \widehat{s}_j + \dfrac{\kappa(1 - l\widehat{y}^j)}{2} \widehat{\lambda} \\
        & \hfill \forall (i,j, l) \in [N] \times [\widehat{N}] \times \{ -1, 1 \}\\[0.8em]
        & \lVert l\bm{\beta} - \bm{z}^{l}_{ij} \rVert_{q^\star} \leq \lambda \\
        & \hfill \forall (i,j, l) \in [N] \times [\widehat{N}] \times \{ -1, 1 \}\\[0.8em]
        &\lVert \bm{z}^{l}_{ij} \rVert_{q^\star} \leq \widehat{\lambda}  \\
        & \hfill \forall (i,j, l) \in [N] \times [\widehat{N}] \times \{ -1, 1 \}\\[0.8em]
        & \bm{\beta} \in \mathbb{R}^n, \; \lambda \geq 0, \; \widehat{\lambda} \geq 0, \; \bm{s} \in \mathbb{R}^N_{+}, \; \widehat{\bm{s}} \in \mathbb{R}^{\widehat{N}}_{+}\\[0.8em]
        & \bm{z}^{l}_{ij} \in \mathbb{R}^n, \  (i,j,l) \in [N] \times [\widehat{N}] \times \{ -1, 1\}.
        \end{array}
    \end{align}
    }
\end{theorem}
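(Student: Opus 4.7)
The plan is to show that every feasible point of \ref{safe} yields a feasible point of the semi-infinite reformulation from Proposition~\ref{prop:reformulate} with an identical objective value; since the auxiliary variables $\bm{z}^l_{ij}$ do not appear in the objective, this immediately implies that \ref{safe} is a safe (upper-bounding) convex approximation of \ref{synth}. Convexity of \ref{safe} itself is clear: the loss constraint has LHS convex in $(\bm{\beta}, \bm{z}^l_{ij})$ because $L^\alpha$ is convex (Observation~\ref{obs:prelim}) and its argument is linear in these variables, while all remaining constraints are norm bounds. The real content is reducing the semi-infinite constraint of Proposition~\ref{prop:reformulate} to the finite constraint system of \ref{safe} via an insertion/subtraction trick that plays the role of the static decision rule in the Bertsimas--de Ruiter relaxation technique.

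Fix $(i,j,l)$ and arbitrary $\bm{x}\in\mathbb{R}^n$. The idea is to decompose the argument of the logistic loss by inserting $\bm{z}^l_{ij}$ as a ``splitter'' of $l\bm{\beta}$:
$$
l\bm{\beta}^\top\bm{x}
\;=\;
\bigl(l\bm{\beta}^\top\bm{x}^i+\bm{z}_{ij}^{l\top}(\widehat{\bm{x}}^j-\bm{x}^i)\bigr)
+(l\bm{\beta}-\bm{z}^l_{ij})^\top(\bm{x}-\bm{x}^i)
+\bm{z}_{ij}^{l\top}(\bm{x}-\widehat{\bm{x}}^j).
$$
Because $L^\alpha$ is $1$-Lipschitz by Observation~\ref{obs:prelim}, applying Lipschitzness on the first summand and the triangle inequality on the remaining two yields
$$
L^\alpha(l\bm{\beta}^\top\bm{x})
\;\le\;
L^\alpha\!\bigl(l\bm{\beta}^\top\bm{x}^i+\bm{z}_{ij}^{l\top}(\widehat{\bm{x}}^j-\bm{x}^i)\bigr)
+\bigl|(l\bm{\beta}-\bm{z}^l_{ij})^\top(\bm{x}-\bm{x}^i)\bigr|
+\bigl|\bm{z}_{ij}^{l\top}(\bm{x}-\widehat{\bm{x}}^j)\bigr|.
$$
Hölder's inequality paired with the two norm constraints of \ref{safe}, $\|l\bm{\beta}-\bm{z}^l_{ij}\|_{q^\star}\le\lambda$ and $\|\bm{z}^l_{ij}\|_{q^\star}\le\widehat{\lambda}$, bounds the two inner-product magnitudes by $\lambda\|\bm{x}-\bm{x}^i\|_q$ and $\widehat{\lambda}\|\bm{x}-\widehat{\bm{x}}^j\|_q$ respectively. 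Invoking the loss constraint of \ref{safe} on the anchor term then rearranging gives
$$
L^\alpha(l\bm{\beta}^\top\bm{x})-\lambda\|\bm{x}^i-\bm{x}\|_q-\widehat{\lambda}\|\widehat{\bm{x}}^j-\bm{x}\|_q
\;\le\;
s_i+\tfrac{\kappa(1-ly^i)}{2}\lambda+\widehat{s}_j+\tfrac{\kappa(1-l\widehat{y}^j)}{2}\widehat{\lambda},
$$
and since $\bm{x}$ was arbitrary, taking the supremum recovers exactly the semi-infinite inequality of Proposition~\ref{prop:reformulate}.

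The main conceptual hurdle is discovering the right splitting: one must realise that the two Wasserstein penalties in the Proposition~\ref{prop:reformulate} constraint admit a ``one-$\bm{z}$-per-pair'' linear-decision-rule structure, so that writing $l\bm{\beta}=(l\bm{\beta}-\bm{z}^l_{ij})+\bm{z}^l_{ij}$ routes each half of the dual coefficient through a distinct norm term. This is precisely the specialisation of the static relaxation of \citet{bertsimas2015tight} to the adjustable RO form produced by Proposition~\ref{prop:np-hard}; once this splitting is in hand, the remaining work is the routine Lipschitz--Hölder chain above, and the $\mathcal{O}(N\widehat{N})$ blow-up of both constraint and variable counts is inherited from the cardinality of the index set $[N]\times[\widehat{N}]\times\{-1,+1\}$ already present in Proposition~\ref{prop:reformulate}.
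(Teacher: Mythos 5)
Your proof is correct, and the decomposition identity
\begin{equation*}
l\bm{\beta}^\top\bm{x}
=\bigl(l\bm{\beta}^\top\bm{x}^i+\bm{z}_{ij}^{l\top}(\widehat{\bm{x}}^j-\bm{x}^i)\bigr)
+(l\bm{\beta}-\bm{z}^l_{ij})^\top(\bm{x}-\bm{x}^i)
+\bm{z}_{ij}^{l\top}(\bm{x}-\widehat{\bm{x}}^j)
\end{equation*}
does check out, as does the Lipschitz--H\"older chain that follows. But the route is genuinely different from the paper's. The paper derives~\ref{safe} \emph{constructively} from the adjustable-RO reformulation~\eqref{ARO} established in Proposition~\ref{prop:np-hard}: it swaps the quantifier order $\forall\theta\,\exists\bm{z}$ to $\exists\bm{z}\,\forall\theta$, then eliminates the universally quantified $\theta$ by recognising the biconjugate $L^{\alpha\ast\ast}=L^{\alpha}$ and the identity $\sup_{\theta\in\mathrm{dom}(L^{\alpha\ast})}\lvert\theta\rvert=\mathrm{Lip}(L^{\alpha})=1$. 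This relies on the whole conjugate-duality apparatus of Lemma~\ref{lemma:main} (Toland duality, infimal convolutions). Your argument, by contrast, bypasses Lemma~\ref{lemma:main} and Proposition~\ref{prop:np-hard} entirely: you take the semi-infinite constraint of Proposition~\ref{prop:reformulate} at face value and directly verify, pair by pair $(i,j,l)$ and pointwise in $\bm{x}$, that any point feasible for~\ref{safe} satisfies it. This is a purely primal ``certificate of restriction,'' shorter and more self-contained, with the trade-off that it does not explain how one would discover the specific $\bm{z}^l_{ij}$-splitting (the paper's derivation produces the static decision rule structure organically from the quantifier swap, whereas you must posit it up front). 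Both arguments are sound; the paper's is generative, yours is verificational, and yours has the merit of showing that the conjugate machinery is not actually needed to \emph{certify} the result once its form is known.

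One small wording caveat: the constraint in Proposition~\ref{prop:reformulate} is written with $\ell^{\alpha}_{\bm{\beta}}(\bm{x},l)=L^{\alpha}(l\cdot\bm{\beta}^\top\bm{x})$, and you implicitly use $\|\bm{x}-\bm{x}^i\|_q=\|\bm{x}^i-\bm{x}\|_q$ to match signs; this is fine but worth stating. Also be a little careful calling your step ``applying Lipschitzness on the first summand'': what you actually use is $L^{\alpha}(a+b)\le L^{\alpha}(a)+\lvert b\rvert$ followed by the triangle inequality on $b=b_1+b_2$, which is the clean way to phrase it.
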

Similarly to~\ref{advdro}, the constraints of~\ref{safe} are exponential cone representable (\textit{cf.}~appendices).

Recall that for $\widehat{\varepsilon}$ large enough,~\ref{synth} reduces to~\ref{advdro}. The following corollary shows that, despite~\ref{safe} being a relaxation of~\ref{synth}, a similar property holds. That is, ``not learning anything from auxiliary data'' remains feasible: the static relaxation does not force learning from $\widehat{\mathbb{P}}_{\widehat{N}}$, and it learns from auxiliary data only if the objective improves.

\begin{corollary}\label{corr:relax}
    \underline{\emph{Feasibility of ignoring auxiliary data:}} Any feasible solution $(\bm{\beta}, \lambda, \bm{s})$ of~\textup{\ref{advdro}} can be used to recover a feasible solution $(\bm{\beta}, \lambda, \widehat{\lambda}, \bm{s}, \widehat{\bm{s}}, \bm{z}^{+}_{ij}, \bm{z}^{-}_{ij})$ for~\ref{safe} with $\widehat{\lambda} = 0$, $\widehat{\bm{s}} = \mathbf{0}$, and $\bm{z}^{+}_{ij} = \bm{z}^{-}_{ij} = \mathbf{0}$. \\
    \underline{\emph{Convergence to~\ref{synth}}}: The optimal value of \ref{safe} converges to the optimal value of~\textup{\ref{synth}}, with the same set of $\bm\beta$ solutions, as $\widehat\varepsilon \rightarrow \infty$.
\end{corollary}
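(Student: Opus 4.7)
The plan is to handle the two bullets separately, with the first being a direct substitution check and the second a sandwich argument.

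For the \emph{feasibility of ignoring auxiliary data} claim, I would just plug the proposed values into each constraint of \ref{safe} and reduce them to the constraints of \ref{advdro} given by Corollary \ref{corr:tractable_og}. With $\bm{z}^\pm_{ij} = \bm{0}$, the first constraint becomes $L^\alpha(l\bm{\beta}^\top\bm{x}^i) \leq s_i + \tfrac{\kappa(1-ly^i)}{2}\lambda + \widehat{s}_j + \tfrac{\kappa(1-l\widehat{y}^j)}{2}\widehat{\lambda}$, which at $\widehat{\bm{s}} = \bm{0}$, $\widehat{\lambda} = 0$ splits into the two \ref{advdro} inequalities (the $l = y^i$ case gives $\ell_{\bm{\beta}}^\alpha(\bm{x}^i, y^i) \leq s_i$ since $1 - ly^i = 0$, and the $l = -y^i$ case gives $\ell_{\bm{\beta}}^\alpha(\bm{x}^i, -y^i) - \kappa \lambda \leq s_i$ since $1 - ly^i = 2$). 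The norm constraint collapses to $\|\bm{\beta}\|_{q^\star} \leq \lambda$ (also from \ref{advdro}), and $\|\bm{0}\|_{q^\star} \leq 0 = \widehat{\lambda}$ is trivial. The objectives match because $\widehat{\varepsilon}\widehat{\lambda} = 0$ and $\widehat{\bm{s}} = \bm{0}$.

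For the \emph{convergence to \ref{synth}} claim, I would sandwich \ref{safe} between \ref{synth} and \ref{advdro}. Theorem \ref{thm:main} gives $\mathrm{opt}(\ref{safe}) \geq \mathrm{opt}(\ref{synth})$ (the static restriction of the semi-infinite reformulation of Proposition \ref{prop:reformulate} is feasibility-restrictive), while the first claim just proved gives $\mathrm{opt}(\ref{safe}) \leq \mathrm{opt}(\ref{advdro})$. Finally, whenever $\widehat{\varepsilon} \geq \varepsilon + \mathrm{W}(\mathbb{P}_N, \widehat{\mathbb{P}}_{\widehat{N}})$, the triangle inequality for the Wasserstein metric gives $\mathfrak{B}_{\varepsilon}(\mathbb{P}_N) \subseteq \mathfrak{B}_{\widehat{\varepsilon}}(\widehat{\mathbb{P}}_{\widehat{N}})$, so $\mathfrak{B}_{\varepsilon}(\mathbb{P}_N) \cap \mathfrak{B}_{\widehat{\varepsilon}}(\widehat{\mathbb{P}}_{\widehat{N}}) = \mathfrak{B}_{\varepsilon}(\mathbb{P}_N)$ and \ref{synth} coincides with \ref{advdro}. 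Chaining the three inequalities shows that all three optimal values agree for every sufficiently large $\widehat{\varepsilon}$, which yields the value convergence.

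For the matching of $\bm{\beta}$-argmin sets, the first claim shows that every \ref{advdro}-optimal $\bm{\beta}^\star$ lifts to a \ref{safe}-optimal tuple. Conversely, for any \ref{safe}-optimal tuple $(\bm{\beta}, \lambda, \widehat{\lambda}, \bm{s}, \widehat{\bm{s}}, \bm{z})$, the bounded nature of $\mathrm{opt}(\ref{advdro})$ together with the $\widehat{\varepsilon}\widehat{\lambda}$ term in the objective forces $\widehat{\lambda} = 0$ once $\widehat{\varepsilon}$ exceeds the threshold above; the constraint $\|\bm{z}^l_{ij}\|_{q^\star} \leq \widehat{\lambda}$ then forces $\bm{z}^l_{ij} = \bm{0}$, and optimality in $\widehat{\bm{s}}$ drives $\widehat{s}_j = 0$, so the remaining constraints on $(\bm{\beta}, \lambda, \bm{s})$ are exactly those of \ref{advdro} at the common optimal value. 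I expect the main, though mild, obstacle to be articulating this last reduction cleanly enough that both inclusions of the argmin sets follow directly; once the sandwich is in place the rest is bookkeeping.
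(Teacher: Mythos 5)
Your proof of the first bullet is correct and in fact more economical than the paper's: the paper goes through an elaborate chain of reformulations (introducing $\tau$, arguing all $\widehat{s}_j$ must coincide at optimality, splitting on $y^i=\pm 1$) to prove that the $\widehat{\lambda}=0$ restriction of \ref{safe} is \emph{equivalent} to \ref{advdro} as an optimization problem, whereas you only need — and only prove — the direct feasibility implication, which is precisely what the statement asks for. The substitution and the $l=y^i$ / $l=-y^i$ case split are verified correctly.

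For the convergence claim your route genuinely differs from the paper's. The paper argues only that $\widehat{\varepsilon}\to\infty$ forces $\widehat{\lambda}=0$ in the objective, and then reuses the equivalence established in its proof of the first bullet. Your sandwich $\mathrm{opt}(\text{Inter-ARO}) \leq \mathrm{opt}(\text{Inter-ARO}^\star) \leq \mathrm{opt}(\text{DR-ARO})$, combined with the observation that $\widehat{\varepsilon}\geq\varepsilon+\mathrm{W}(\mathbb{P}_N,\widehat{\mathbb{P}}_{\widehat{N}})$ makes $\mathfrak{B}_\varepsilon(\mathbb{P}_N)\subseteq\mathfrak{B}_{\widehat\varepsilon}(\widehat{\mathbb{P}}_{\widehat{N}})$ by the Wasserstein triangle inequality, is both more explicit and quantitatively stronger: it yields exact equality of the three optimal values for \emph{all} $\widehat{\varepsilon}$ beyond a computable finite threshold, rather than just a limit. (The first inequality does hold in the claimed direction — the static relaxation of Theorem~\ref{thm:main} tightens the constraints, so it is a conservative outer minimization.)

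Where you should be more careful is the $\bm{\beta}$-argmin matching. Your statement that the $\widehat{\varepsilon}\widehat{\lambda}$ term "forces $\widehat{\lambda}=0$ once $\widehat{\varepsilon}$ exceeds the threshold" is not literally true: for any finite $\widehat{\varepsilon}$ the objective only bounds $\widehat{\lambda}\leq\mathrm{opt}(\text{DR-ARO})/\widehat{\varepsilon}$, which may remain strictly positive, and in principle a small positive $\widehat{\lambda}$ could allow a nonzero $\bm{z}^l_{ij}$ that slightly relaxes the logloss constraints with no net objective loss. What is clean is that the lower bound of the sandwich becomes tight at the $\widehat{\lambda}=0$, $\bm{z}=\bm{0}$, $\widehat{\bm{s}}=\bm{0}$ lift of any \ref{advdro}-optimal $\bm{\beta}$, so the \ref{advdro}-argmin is contained in the \ref{safe}-argmin; the reverse inclusion, as $\widehat{\varepsilon}\to\infty$, really is a limiting statement about the argmin correspondence, and both your sketch and the paper's one-line justification gloss over it at the same level of rigor. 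You are right to flag this as the soft spot; the paper does not treat it more carefully than you do.
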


In light of Corollary~\ref{corr:relax}, Appendix~\ref{app_uai_data_sect} discusses that some simulations in our numerical experiments chose not to incorporate the auxiliary data by setting a sufficiently large $\widehat{\varepsilon}$. We close the section by discussing how~\ref{synth} can recover some problems in the DRO and ARO literature. Firstly, recall that~\ref{synth} can ignore the auxiliary data once $\widehat\varepsilon$ is set large enough, reducing this problem to~\ref{advdro}. Moreover, notice that $\alpha = 0$ reduces $\ell_{\bm{\beta}}^\alpha$ to $\ell_{\bm{\beta}}$, hence for $\alpha = 0$ and $\widehat{\varepsilon} = \infty$~\ref{synth} recovers the Wasserstein LR model of~\cite{NIPS2015}.
We next relate~\ref{synth} to the problems in the ARO literature that use auxiliary data. 
The works in this  literature~\citep{gowal2021improving,xing2022artificially} solve the following
\begin{align}\label{synth_literature}
    \mspace{-10mu}
    \begin{array}{r}
        \displaystyle \underset{\bm{\beta} \in \mathbb{R}^n}{\inf} \   \dfrac{1}{N + w \widehat{N}} \big[ \sum_{i \in [N]}  \underset{\bm{z}^i \in \mathcal{B}_{p}(\alpha)}{\sup} \{\ell_{\bm{\beta}}(\bm{x}^i + \bm{z}^i, y^i)\}  +  \\
        \displaystyle w \sum_{j \in [\widehat N]}  \underset{\bm{z}^j \in {\mathcal{B}_p(\alpha)}}{\sup} \{\ell_{\bm{\beta}}(\bm{\widehat x}^j + \bm{z}^j, \widehat{y}^j)\} \big],
    \end{array}
\end{align} 
for some $w > 0$, where $ \mathcal{B}_p(\alpha) := \{ \bm{z} \in \mathbb{R}^n : \lVert \bm{z} \rVert_p \leq \alpha\}$. We observe that~\eqref{synth_literature} resembles a variant of~\ref{adv} that replaces the empirical distribution $\mathbb{P}_N$ with its mixture with $\widehat{\mathbb{P}}_{\widehat{N}}$:
\begin{observation}\label{obs:literature}
    Problem~\eqref{synth_literature} is equivalent to
    \begin{align}\label{eq:with_mixture}
    \begin{array}{cl}
        \displaystyle \underset{\bm{\beta} \in \mathbb{R}^n}{\inf} &  \displaystyle \mathbb{E}_{\mathbb{Q}_{\mathrm{mix}}} [\ell^{\alpha}_{\bm{\beta}}(\bm{x}, y)]
    \end{array}
\end{align}
where $\mathbb{Q}_{\mathrm{mix}} := \lambda \cdot \mathbb{P}_N +  (1-\lambda)\cdot \widehat{\mathbb{P}}_{\widehat{N}}$ for $\lambda = \frac{N}{N + w \widehat{N}}$.
\end{observation}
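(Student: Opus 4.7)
The equivalence is essentially an algebraic reorganization, together with a single structural ingredient already established in the paper. The plan is, first, to invoke Observation~\ref{obs:prelim} (applied pointwise, instance by instance) to rewrite each inner supremum $\sup_{\lVert\bm z\rVert_p\le\alpha}\ell_{\bm\beta}(\bm x+\bm z,y)$ as the closed-form adversarial loss $\ell^{\alpha}_{\bm\beta}(\bm x,y)=\log(1+\exp(-y\cdot\bm\beta^\top\bm x+\alpha\lVert\bm\beta\rVert_{p^\star}))$. This turns the objective of~\eqref{synth_literature} into the deterministic finite sum
\[
\frac{1}{N+w\widehat N}\Bigl[\sum_{i\in[N]}\ell^{\alpha}_{\bm\beta}(\bm x^i,y^i)+w\sum_{j\in[\widehat N]}\ell^{\alpha}_{\bm\beta}(\widehat{\bm x}^j,\widehat y^j)\Bigr].
\]

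\noindent Second, I would factor the coefficient $1/(N+w\widehat N)$ into $N/(N+w\widehat N)$ for the first sum (absorbing the $1/N$ needed to turn it into an empirical expectation over $\mathbb{P}_N$) and $w\widehat N/(N+w\widehat N)$ for the second (with $1/\widehat N$ turning it into an empirical expectation over $\widehat{\mathbb{P}}_{\widehat N}$). Setting $\lambda=N/(N+w\widehat N)$, so that $1-\lambda=w\widehat N/(N+w\widehat N)$, rewrites the objective as
\[
\lambda\,\mathbb{E}_{\mathbb{P}_N}[\ell^{\alpha}_{\bm\beta}(\bm x,y)]+(1-\lambda)\,\mathbb{E}_{\widehat{\mathbb{P}}_{\widehat N}}[\ell^{\alpha}_{\bm\beta}(\bm x,y)].
\]
Third, by linearity of the expectation with respect to mixtures, this equals $\mathbb{E}_{\mathbb{Q}_{\mathrm{mix}}}[\ell^{\alpha}_{\bm\beta}(\bm x,y)]$ for $\mathbb{Q}_{\mathrm{mix}}=\lambda\mathbb{P}_N+(1-\lambda)\widehat{\mathbb{P}}_{\widehat N}$, which is precisely the objective of~\eqref{eq:with_mixture}. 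Since the inner suprema inside the finite sum decouple across samples and depend only on the corresponding $(\bm x^i,y^i)$ or $(\widehat{\bm x}^j,\widehat y^j)$, this manipulation is valid for every $\bm\beta\in\mathbb R^n$, so the two infima coincide as optimization problems in $\bm\beta$.

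\noindent\textbf{Main obstacle.} There is no substantive obstacle: the only nontrivial step is appealing to Observation~\ref{obs:prelim} to collapse the per-sample adversarial maximizations into a closed-form convex loss, after which the remaining argument is an arithmetic rearrangement followed by linearity of expectation. Thus the proof can be presented in just a few lines.
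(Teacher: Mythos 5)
Your proof is correct and uses the same two ingredients as the paper's own argument: the per-instance identity $\sup_{\lVert\bm z\rVert_p\le\alpha}\ell_{\bm\beta}(\bm x+\bm z,y)=\ell^{\alpha}_{\bm\beta}(\bm x,y)$ from Observation~\ref{obs:prelim}, and linearity of the expectation with respect to the mixture $\mathbb{Q}_{\mathrm{mix}}$ together with the discrete, finite-sum representation of the empirical measures. The only cosmetic difference is order of operations (you collapse the suprema first and then split the mixture, whereas the paper splits the mixture integral first and invokes the supremum-equals-$\ell^{\alpha}$ identity at the end), so the two arguments are essentially the same.
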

We give a condition on $\varepsilon$ and $\widehat{\varepsilon}$ to guarantee that the mixture distribution introduced in Proposition~\ref{obs:literature} lives in $\mathfrak{B}_{\varepsilon}({\mathbb{P}}_N) \cap \mathfrak{B}_{\widehat \varepsilon}({\widehat{\mathbb{P}}}_{\widehat{N}})$, that is, the distribution $\mathbb{Q}_{\mathrm{mix}}$ will be feasible in the $\sup$ problem of~\ref{synth}.
\begin{proposition}\label{prop:included}
    For any $\lambda \in (0,1)$ and $\mathbb{Q}_{\mathrm{mix}} = \lambda \cdot \mathbb{P}_N +  (1-\lambda)\cdot \widehat{\mathbb{P}}_{\widehat{N}}$, we have $\mathbb{Q}_{\mathrm{mix}} \in \mathfrak{B}_{\varepsilon}({\mathbb{P}}_N) \cap \mathfrak{B}_{\widehat \varepsilon}({\widehat{\mathbb{P}}}_{\widehat{N}})$ whenever 
    $\varepsilon + \widehat{\varepsilon} \geq \mathrm{W}(\mathbb{P}_N, \widehat{\mathbb{P}}_{\widehat{N}})$ and $\frac{\widehat{\varepsilon}}{\varepsilon} = \frac{\lambda}{1 - \lambda}$.
\end{proposition}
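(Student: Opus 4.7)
The plan is to prove $\mathbb{Q}_{\mathrm{mix}} \in \mathfrak{B}_{\varepsilon}(\mathbb{P}_N)$ and $\mathbb{Q}_{\mathrm{mix}} \in \mathfrak{B}_{\widehat{\varepsilon}}(\widehat{\mathbb{P}}_{\widehat{N}})$ separately, both via the convexity of the type-1 Wasserstein distance in each of its arguments. Concretely, I would first recall (or briefly establish) the inequality
\begin{equation*}
    \mathrm{W}(\mathbb{P}, \lambda \mathbb{Q}_1 + (1-\lambda) \mathbb{Q}_2) \;\leq\; \lambda \cdot \mathrm{W}(\mathbb{P},\mathbb{Q}_1) + (1-\lambda) \cdot \mathrm{W}(\mathbb{P},\mathbb{Q}_2),
\end{equation*}
which follows from the fact that if $\Pi_1 \in \mathcal{C}(\mathbb{P},\mathbb{Q}_1)$ and $\Pi_2 \in \mathcal{C}(\mathbb{P},\mathbb{Q}_2)$ are admissible couplings, then $\lambda \Pi_1 + (1-\lambda) \Pi_2$ is an admissible coupling in $\mathcal{C}(\mathbb{P}, \lambda \mathbb{Q}_1 + (1-\lambda) \mathbb{Q}_2)$ whose transport cost is the corresponding convex combination. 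Taking the infimum on the right-hand side yields the bound.

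Applying this with $\mathbb{P} = \mathbb{P}_N$, $\mathbb{Q}_1 = \mathbb{P}_N$, and $\mathbb{Q}_2 = \widehat{\mathbb{P}}_{\widehat{N}}$ gives
\begin{equation*}
    \mathrm{W}(\mathbb{P}_N, \mathbb{Q}_{\mathrm{mix}}) \;\leq\; \lambda \cdot \mathrm{W}(\mathbb{P}_N, \mathbb{P}_N) + (1-\lambda)\cdot \mathrm{W}(\mathbb{P}_N, \widehat{\mathbb{P}}_{\widehat{N}}) \;=\; (1-\lambda)\cdot \mathrm{W}(\mathbb{P}_N, \widehat{\mathbb{P}}_{\widehat{N}}),
\end{equation*}
and symmetrically $\mathrm{W}(\widehat{\mathbb{P}}_{\widehat{N}}, \mathbb{Q}_{\mathrm{mix}}) \leq \lambda \cdot \mathrm{W}(\mathbb{P}_N, \widehat{\mathbb{P}}_{\widehat{N}})$.

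Next I would translate the two hypotheses into the quantities appearing in these bounds. From $\widehat{\varepsilon}/\varepsilon = \lambda/(1-\lambda)$ we obtain $\lambda = \widehat{\varepsilon}/(\varepsilon + \widehat{\varepsilon})$ and $1-\lambda = \varepsilon/(\varepsilon + \widehat{\varepsilon})$. Combining these with the triangle-type hypothesis $\varepsilon + \widehat{\varepsilon} \geq \mathrm{W}(\mathbb{P}_N, \widehat{\mathbb{P}}_{\widehat{N}})$ yields
\begin{equation*}
    (1-\lambda)\cdot \mathrm{W}(\mathbb{P}_N, \widehat{\mathbb{P}}_{\widehat{N}}) \;=\; \tfrac{\varepsilon}{\varepsilon + \widehat{\varepsilon}}\cdot \mathrm{W}(\mathbb{P}_N, \widehat{\mathbb{P}}_{\widehat{N}}) \;\leq\; \varepsilon,
\end{equation*}
and analogously $\lambda \cdot \mathrm{W}(\mathbb{P}_N, \widehat{\mathbb{P}}_{\widehat{N}}) \leq \widehat{\varepsilon}$, proving the two ball memberships simultaneously.

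The argument is essentially an application of Wasserstein convexity plus algebra, and I do not anticipate a genuine obstacle. The only mild subtlety to watch for is making sure the convexity inequality is cleanly justified for our feature-label metric on $\Xi$ from Definition~\ref{def:feature-label}; since the bound holds for any nonnegative lower-semicontinuous cost, it applies verbatim here. If desired, for the concrete case of finitely supported empirical distributions, one could instead exhibit the coupling directly: transport the mass that $\mathbb{Q}_{\mathrm{mix}}$ places on each atom $\bm{\xi}^i$ of $\mathbb{P}_N$ to itself at zero cost, and transport its mass on each atom $\widehat{\bm{\xi}}^j$ of $\widehat{\mathbb{P}}_{\widehat{N}}$ to $\mathbb{P}_N$ via an optimal plan between $\widehat{\mathbb{P}}_{\widehat{N}}$ and $\mathbb{P}_N$, scaled by $1-\lambda$; this gives the same bound without invoking convexity abstractly.
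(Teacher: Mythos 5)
Your proof is correct and takes essentially the same route as the paper: both prove and then apply the convexity of the type-1 Wasserstein distance in one argument (the paper packages this as Lemmas~\ref{lem_mixture} and~\ref{lem_coupling}, via the same mixture-of-couplings construction you describe), obtaining $\mathrm{W}(\mathbb{P}_N,\mathbb{Q}_{\mathrm{mix}}) \leq (1-\lambda)\mathrm{W}(\mathbb{P}_N,\widehat{\mathbb{P}}_{\widehat N})$ and its symmetric counterpart, and then invoke the two hypotheses. The only cosmetic difference is the final algebra: you solve the ratio condition for $\lambda$ and $1-\lambda$ and substitute, whereas the paper shows the two sufficient inequalities collapse to $\lambda\varepsilon = (1-\lambda)\widehat{\varepsilon}$.
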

For $\lambda = \frac{N}{N + \widehat{N}}$, if the intersection $\mathfrak{B}_{\varepsilon}({\mathbb{P}}_N) \cap \mathfrak{B}_{\widehat \varepsilon}({\widehat{\mathbb{P}}}_{\widehat{N}})$ is nonempty, Proposition~\ref{prop:included} implies that a sufficient condition for this intersection to include $\mathbb{Q}_{\mathrm{mix}}$ is $\widehat{\varepsilon} / \varepsilon = N / \widehat{N}$, which is intuitive since the radii of Wasserstein ambiguity sets are chosen inversely proportional to the number of samples~\citep[Theorem 18]{kuhn2019wasserstein}.
\section{SETTING WASSERSTEIN RADII}\label{sec:radii}
Thus far, we have assumed knowledge of DRO ball radii $\varepsilon$ and $\widehat\varepsilon$ that satisfy Assumptions \ref{ass:DRO} and \ref{ass:strong}. In this section, we employ Wasserstein finite-sample statistics techniques to estimate these values.
 
\textbf{Setting $\epsilon$ for \ref{advdro}.} 
In the following theorem, we present tight characterizations for $\varepsilon$ so that the ball $\mathfrak{B}_{\varepsilon}({{\mathbb{P}}}_{{N}})$ includes the true distribution $\mathbb{P}^0$ with arbitrarily high confidence. We show that for an $\varepsilon$ chosen in such a manner,~\ref{advdro} is well-defined. The full description of this result is available in our appendices.
\begin{theorem}[abridged collection of results from \citealt{fournier2015rate,kuhn2019wasserstein,YKW21:linear_optimization_wasserstein}]\label{thm:stats1}
    For light-tailed distribution $\mathbb{P}^0$ and $\varepsilon \geq \mathcal{O}(\frac{\log(\eta^{-1})}{N})^{1/n}$ for $\eta \in (0,1)$, we have: \emph{(i)} $\mathbb{P}^0 \in \mathfrak{B}_{\varepsilon}(\mathbb{P}_N)$ with $1 - \eta$ confidence; \emph{(ii)} \textup{\ref{advdro}} overestimates the expected loss for $\mathbb{P}^0$ with $1-\eta$ confidence; \emph{(iii)} \textup{\ref{advdro}} is asymptotically consistent $\mathbb{P}^0$-a.s.; \emph{(iv)} worst-case distributions for optimal solutions of~\textup{\ref{advdro}} are supported on at most $N+1$ outcomes.
\end{theorem}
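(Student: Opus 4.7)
Since the theorem is an abridged collection of existing results adapted to our setting, the plan is to verify that the adversarial loss $\ell^{\alpha}_{\bm\beta}$ fits into the hypotheses of each cited result, and then invoke them in sequence. The key observation, established already in Observation~\ref{obs:prelim}, is that the adversarial loss is convex and Lipschitz continuous in $\bm{x}$ with modulus $\lVert\bm\beta\rVert_{p^\star}$ (and trivially bounded in $y$ by the bounded label space), so it belongs to the class of losses for which Wasserstein DRO enjoys finite-sample and consistency guarantees.

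For part \emph{(i)}, I would invoke the concentration inequality of Fournier and Guillin (2015, Theorem~2): for a light-tailed distribution $\mathbb{P}^0$ on $\mathbb{R}^n$ and $N$ i.i.d.\@ samples, the empirical measure satisfies $\mathrm{W}(\mathbb{P}^0,\mathbb{P}_N)\leq \varepsilon$ with probability at least $1-\eta$ whenever $\varepsilon \geq c_1(\log(c_2/\eta)/N)^{1/\max\{n,2\}}$ for constants depending on the light-tail parameters. The feature-label metric of Definition~\ref{def:feature-label} is equivalent to a product metric (after embedding the discrete label into $\{-\kappa/2,+\kappa/2\}$), so the same concentration rate applies. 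This yields $\mathbb{P}^0 \in \mathfrak{B}_{\varepsilon}(\mathbb{P}_N)$ with confidence $1-\eta$.

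For part \emph{(ii)}, the argument is immediate: conditional on the event $\mathbb{P}^0 \in \mathfrak{B}_{\varepsilon}(\mathbb{P}_N)$ from \emph{(i)}, we have $\mathbb{E}_{\mathbb{P}^0}[\ell^{\alpha}_{\bm\beta}(\bm x,y)] \leq \sup_{\mathbb{Q}\in\mathfrak{B}_{\varepsilon}(\mathbb{P}_N)} \mathbb{E}_{\mathbb{Q}}[\ell^{\alpha}_{\bm\beta}(\bm x,y)]$ for every $\bm\beta$, hence the optimal value of~\ref{advdro} upper-bounds $\inf_{\bm\beta} \mathbb{E}_{\mathbb{P}^0}[\ell^{\alpha}_{\bm\beta}(\bm x,y)]$ with the same confidence. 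For part \emph{(iii)}, I would apply the asymptotic consistency result of Esfahani and Kuhn / Kuhn et al.\@ (2019, Theorem~3.6): as $N\to\infty$ with $\varepsilon = \varepsilon_N$ decaying at the rate prescribed above, the series $\sum_{N}\eta_N$ is summable, so Borel--Cantelli gives $\mathrm{W}(\mathbb{P}^0,\mathbb{P}_N)\to 0$ $\mathbb{P}^0$-a.s.; combined with the continuity of $\bm\beta \mapsto \sup_{\mathbb{Q}\in\mathfrak{B}_{\varepsilon_N}(\mathbb{P}_N)}\mathbb{E}_{\mathbb{Q}}[\ell^{\alpha}_{\bm\beta}]$ and the Lipschitz nature of the loss, the optimal value and optimizers of~\ref{advdro} converge almost surely to those of~\ref{ideal}.

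Part \emph{(iv)} is the only structural statement, and I anticipate it to be the least routine step. I would apply the worst-case distribution characterization from Yang--Kim--Wang (2021) (and the corresponding result in Kuhn et al.\@ 2019 for convex losses): the inner supremum in~\ref{advdro} is an infinite-dimensional linear program over probability measures with $N$ marginal-transport constraints and one budget constraint on the total transport cost, and strong duality together with the tractable dual reformulation in Corollary~\ref{corr:tractable_og} implies that an optimal primal measure can always be chosen as a discrete measure with support size at most equal to the number of active dual constraints, which is at most $N+1$. Concretely, at an optimum of Corollary~\ref{corr:tractable_og}, for each atom $\bm\xi^i$ of $\mathbb{P}_N$ the worst-case mass is either kept at $\bm\xi^i$ (if only the first constraint is tight) or moved to a single perturbation (if the second constraint is tight), producing at most $N$ atoms in addition to at most one residual atom from the transport budget, for a total of $N+1$. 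The main obstacle here is verifying the exchange of sup-inf and the extremal argument rigorously in our adversarial-loss setting, but since $\ell^{\alpha}_{\bm\beta}$ inherits the convex--Lipschitz structure required by these cited theorems, their proofs transfer without modification.
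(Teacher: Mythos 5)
Your overall plan matches the paper's: verify that the adversarial loss $\ell^{\alpha}_{\bm\beta}$ inherits the regularity needed by the cited Wasserstein DRO results and then invoke them in sequence. Parts \emph{(i)}, \emph{(ii)}, and \emph{(iv)} are handled essentially as the paper does — the paper cites \citet[Thms.~18,~19]{kuhn2019wasserstein} for \emph{(i)}--\emph{(ii)} and simply defers to \citet{YKW21:linear_optimization_wasserstein} for \emph{(iv)}, whereas you also sketch an extreme-point intuition for the $N{+}1$ support bound; that sketch is plausible in spirit but slightly misdescribes the mechanism (the $N{+}1$ arises because at most one empirical atom has its mass \emph{split} between two destinations, not because of a ``residual atom from the transport budget''). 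Since the paper doesn't prove \emph{(iv)} either, this is not a gap so much as a heuristic that would need tightening if expanded.

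The genuine gap is in part \emph{(iii)}. You invoke ``the Lipschitz nature of the loss'' to pass from $\mathrm{W}(\mathbb{P}^0,\mathbb{P}_N)\to 0$ to convergence of the worst-case risk, but the Lipschitz modulus of $\ell^{\alpha}_{\bm\beta}(\cdot,y)$ scales with $\lVert\bm\beta\rVert$, so it is \emph{not} uniform over $\bm\beta\in\mathbb{R}^n$. The paper resolves this by explicitly restricting to a bounded hypothesis set $\mathcal{H}\subseteq[-M,M]^n$ (noting this is w.l.o.g.\ for linear classifiers) and then verifying the \emph{growth condition} required by \citet[Thm.~20]{kuhn2019wasserstein}: there exist $\bm\xi^0\in\Xi$ and $C>0$ with $\ell^{\alpha}_{\bm\beta}(\bm\xi)\le C\bigl(1+d(\bm\xi,\bm\xi^0)\bigr)$ \emph{uniformly} over $\bm\beta\in\mathcal{H}$. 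This verification is the nontrivial content of the proof for part \emph{(iii)} (it uses $\log(1+e^t)\le\log 2+|t|$, H\"older to compare $\ell_1$ and $\ell_q$ norms, and the bound $\lVert\bm\beta\rVert\le M\sqrt n$). Your proof omits both the restriction to a bounded $\mathcal{H}$ and the growth-condition computation, so as written the asymptotic consistency step does not go through.
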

We next derive an analogous result for~\ref{synth}.

\textbf{Choosing $\epsilon$ and $\widehat{\varepsilon}$ in \ref{synth}.} Recall that \ref{synth} revises~\ref{advdro} by intersecting $\mathfrak{B}_{\varepsilon}(\mathbb{P}_N)$ with $\mathfrak{B}_{\widehat\varepsilon}(\widehat{\mathbb{P}}_{\widehat{N}})$. We need a nonempty intersection for \ref{synth} to be well-defined. A necessary and sufficient condition follows from the triangle inequality $\varepsilon + \widehat{\varepsilon} \geq \mathrm{W}(\mathbb{P}_N, \widehat{\mathbb{P}}_{\widehat{N}})$, where $\mathrm{W}(\mathbb{P}_N, \widehat{\mathbb{P}}_{\widehat{N}})$ can be computed with linear optimization as both distributions are discrete. We also want this intersection to include $\mathbb{P}^0$ with high confidence, in order to satisfy Assumption~\ref{ass:strong}. We next provide a tight characterization for such $\varepsilon, \widehat{\varepsilon}$. The full description of this result is available in our appendices.
\begin{theorem}[abridged]\label{thm:stats2}
    For light-tailed $\mathbb{P}^0$ and $\widehat{\mathbb{P}}$, if $\varepsilon \geq \mathcal{O}(\frac{\log(\eta_1^{-1})}{N})^{1/n}$ and $\widehat{\varepsilon} \geq \mathrm{W}(\mathbb{P}^0, \widehat{\mathbb{P}}) + \mathcal{O}(\frac{\log(\eta_2^{-1})}{\widehat{N}})^{1/n}$ for $\eta_1, \eta_2 \in (0,1)$ with $\eta := \eta_1 + \eta_2 < 1$, we have: \emph{(i)} $\mathbb{P}^0 \in \mathfrak{B}_{\varepsilon}({\mathbb{P}}_N) \cap \mathfrak{B}_{\widehat \varepsilon}({\widehat{\mathbb{P}}}_{\widehat{N}})$ with $1 - \eta$ confidence; \emph{(ii)} \textup{\ref{synth}} overestimates true loss with $1-\eta$ confidence.
\end{theorem}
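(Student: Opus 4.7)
The plan is to prove part \emph{(i)} by applying a Fournier--Guillin style measure-concentration inequality to each sample separately, combining them with the triangle inequality for the Wasserstein metric, and then invoking a union bound. Part \emph{(ii)} then follows almost for free, since once $\mathbb{P}^0$ belongs to the ambiguity set the worst-case expectation dominates the true expectation.

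First I would bound $\mathrm{W}(\mathbb{P}^0,\mathbb{P}_N)$. Since $\mathbb{P}^0$ is light-tailed and $\mathbb{P}_N$ is the empirical measure of $N$ i.i.d.\ samples from $\mathbb{P}^0$, the concentration result underlying Theorem~\ref{thm:stats1} (\emph{cf.}\ \citealt{fournier2015rate}) gives
\begin{equation*}
    \mathrm{Prob}\!\left[\mathrm{W}(\mathbb{P}^0,\mathbb{P}_N) > \varepsilon\right] \;\leq\; \eta_1
    \qquad \text{whenever} \qquad \varepsilon \;\geq\; \mathcal{O}\!\left(\tfrac{\log(\eta_1^{-1})}{N}\right)^{1/n}.
\end{equation*}
This handles the first ball in the intersection. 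Next I would bound $\mathrm{W}(\mathbb{P}^0,\widehat{\mathbb{P}}_{\widehat N})$ in two pieces via the triangle inequality
\begin{equation*}
    \mathrm{W}(\mathbb{P}^0,\widehat{\mathbb{P}}_{\widehat N}) \;\leq\; \mathrm{W}(\mathbb{P}^0,\widehat{\mathbb{P}}) + \mathrm{W}(\widehat{\mathbb{P}},\widehat{\mathbb{P}}_{\widehat N}).
\end{equation*}
The first summand is the (assumed known, or at least upper bounded) distributional-shift term, and the second is the empirical concentration of $\widehat{\mathbb{P}}_{\widehat N}$ around its population distribution $\widehat{\mathbb{P}}$. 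Applying the same Fournier--Guillin bound to the auxiliary sample, which is independent of the primary sample, yields
\begin{equation*}
    \mathrm{Prob}\!\left[\mathrm{W}(\widehat{\mathbb{P}},\widehat{\mathbb{P}}_{\widehat N}) > \mathcal{O}\!\left(\tfrac{\log(\eta_2^{-1})}{\widehat N}\right)^{1/n}\right] \;\leq\; \eta_2.
\end{equation*}
Substituting into the triangle inequality and using the stated lower bound on $\widehat\varepsilon$ gives $\mathrm{W}(\mathbb{P}^0,\widehat{\mathbb{P}}_{\widehat N}) \leq \widehat\varepsilon$ with probability at least $1-\eta_2$.

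Finally, a union bound over the two failure events shows that both containments $\mathbb{P}^0 \in \mathfrak{B}_\varepsilon(\mathbb{P}_N)$ and $\mathbb{P}^0 \in \mathfrak{B}_{\widehat\varepsilon}(\widehat{\mathbb{P}}_{\widehat N})$ hold simultaneously with probability at least $1-\eta_1-\eta_2 = 1-\eta$, proving \emph{(i)}. For \emph{(ii)}, on the high-probability event just constructed $\mathbb{P}^0$ is feasible in the $\sup$-problem defining~\ref{synth}, so
\begin{equation*}
    \mathbb{E}_{\mathbb{P}^0}[\ell^{\alpha}_{\bm{\beta}}(\bm{x},y)] \;\leq\; \sup_{\mathbb{Q}\in\mathfrak{B}_\varepsilon(\mathbb{P}_N)\cap\mathfrak{B}_{\widehat\varepsilon}(\widehat{\mathbb{P}}_{\widehat N})}\mathbb{E}_{\mathbb{Q}}[\ell^{\alpha}_{\bm{\beta}}(\bm{x},y)]
\end{equation*}
for every $\bm{\beta}$, and in particular for the optimizer of~\ref{synth}, which gives the overestimation claim with confidence $1-\eta$.

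The main obstacle is not technical depth but careful bookkeeping: the two empirical measures must be treated as independent so that the union bound is valid with the stated $\eta_1+\eta_2$, and the triangle-inequality decomposition for $\widehat{\mathbb{P}}_{\widehat N}$ must explicitly separate the irreducible distributional-shift term $\mathrm{W}(\mathbb{P}^0,\widehat{\mathbb{P}})$ from the sample-size-dependent concentration term. One must also verify light-tailedness is inherited/assumed for both $\mathbb{P}^0$ and $\widehat{\mathbb{P}}$ to invoke Fournier--Guillin on each side; these are exactly the hypotheses of the theorem.
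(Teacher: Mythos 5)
Your proposal matches the paper's own argument essentially step for step: apply the Fournier--Guillin light-tailed concentration bound to each empirical measure separately, decompose $\mathrm{W}(\mathbb{P}^0,\widehat{\mathbb{P}}_{\widehat N})$ via the triangle inequality into the irreducible shift $\mathrm{W}(\mathbb{P}^0,\widehat{\mathbb{P}})$ plus the sample-concentration term, union-bound the two failure events to get confidence $1-\eta_1-\eta_2$, and then note that once $\mathbb{P}^0$ lies in the intersection the $\sup$ over the ambiguity set dominates the true expectation. The only minor imprecision is that the union bound itself does not require independence; independence (or rather, that each containment event depends only on its own sample) is what lets one identify probabilities under the joint product measure with the corresponding marginal probabilities, but this does not affect the validity of the argument.
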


\begin{remark}\textup{\ref{synth}} is not asymptotically consistent, given that $\widehat{N} \rightarrow \infty$ will let $\widehat{\varepsilon} \rightarrow \mathrm{W}(\mathbb{P}^0, \widehat{\mathbb{P}})$ due to the 
non-zero constant distance between the true distribution $\mathbb{P}^0$ and the auxiliary distribution $\widehat{\mathbb{P}}$. 
\ref{synth} is thus not useful in asymptotic data regimes.
\end{remark}

\begin{remark}The assumption that true data-generating distributions are light-tailed is satisfied when $\Xi$ is compact, and it is a common assumption for even simple sample average approximation techniques~\citep[Assumption 3.3]{med17}.
\end{remark}

\paragraph{Knowledge of $\mathrm{W}(\mathbb{P}^0, \widehat{\mathbb{P}})$.} In Theorem~\ref{thm:stats2}, we use $\mathrm{W}(\mathbb{P}^0, \widehat{\mathbb{P}})$ explicitly. This distance, however, is typically unknown, and a common approach is to cross-validate it\footnote{In practice, distance between the unknown true and auxiliary data-generating distributions is also cross-validated in the transfer learning and domain adaptation literature~\citep{zhong2010cross}.}. This would be applicable in our setting thanks to Corollary~\ref{corr:relax}, because the relaxation~\ref{safe} does not force learning from the auxiliary data unless it is useful, that is, one can seek evidence for the usefulness of the auxiliary data via cross-validation. Moreover, there are several domains where $\mathrm{W}(\mathbb{P}^0, \widehat{\mathbb{P}})$ is known exactly. For some special cases, we can use direct domain knowledge (\textit{e.g.}, the ``Uber vs Lyft'' example of~\citealt{taskesen2021sequential}). A recent example comes from learning from multi-source data, where $\mathbb{P}^0$ is named the target distribution and $\widehat{\mathbb{P}}$ is the source distribution~\citep[\S 1]{rychener2024wasserstein}. Another domain is private data release, where a data holder shares a subset of opt-in data to form $\mathbb{P}_{N}$, and generates a privacy-preserving synthetic dataset from the rest. The (privately generated) synthetic distribution has a known nonzero Wasserstein distance from the true data-generating distribution \citep{dwork2014algorithmic,ullman2020pcps}. See~\citep[\S 5]{rychener2024wasserstein} for more scenarios that enable quantifying $\mathrm{W}(\mathbb{P}^0, \widehat{\mathbb{P}})$. Alternatively, one can directly rely on $\mathrm{W}(\mathbb{P}_N, \widehat{\mathbb{P}})$ if it is known, especially when synthetic data generators are trained on the empirical dataset. By employing Wasserstein GANs, which minimize the Wasserstein-1 distance, the distance between the generated distribution and the training distribution is minimized. This ensures that the synthetic distribution remains within a radius of the training distribution~\citep{WGAN}.

\section{EXPERIMENTS} \label{section:experiments}
We conduct a series of experiments, each having a different source of auxiliary data, to test the proposed DR ARO models. We use the following abbreviations, where `solution' refers to the optimal $\bm{\beta}$ to make decisions: 
\begin{itemize}[itemsep=1pt,topsep=0pt]
    \item[-] \texttt{ERM}: Solution of problem~\ref{erm} (\textit{i.e.}, nai\"ve LR);
    \item[-] \texttt{ARO}: Solution of problem~\ref{adv} (\textit{i.e.}, adversarially robust LR);
    \item[-] \texttt{ARO+Aux}: Solution of problem~\eqref{synth_literature} (\textit{i.e.}, replacing the empirical distribution of \ref{adv} with its mixture with auxiliary data);
    \item[-] \texttt{DRO+ARO}: Solution of~\ref{advdro} (\textit{i.e.}, the Wasserstein DR counterpart of~\ref{adv});
    \item[-] \texttt{DRO+ARO+Aux}: Solution of~\ref{safe} (\textit{i.e.}, relaxation of~\ref{synth} that intersects the ambiguity set of~\ref{advdro} with an auxiliary Wasserstein ball);
\end{itemize}
All parameters are $5$-fold cross-validated from various grids. The Wasserstein radii use the grid $\{10^{-6}, 10^{-5}, 10^{-4}, 10^{-3}, 10^{-2}, 10^{-1}, 0,1,2,5,10 \}$, which is sufficient to ensure that the rule-of-thumb $\varepsilon = \mathcal{O}(1/\sqrt{N})$ is included around the center of this grid for all experiments conducted. To ensure that the intersections of Wasserstein balls are nonempty, we compute $\mathrm{W}(\mathbb{P}_N, \widehat{\mathbb{P}}_{\widehat{N}})$ once, and discard all combinations $(\varepsilon, \widehat{\varepsilon})$ with $\varepsilon + \widehat{\varepsilon} < \mathrm{W}(\mathbb{P}_N, \widehat{\mathbb{P}}_{\widehat{N}})$. The weight parameter $\omega$ of \texttt{ARO+Aux} is cross-validated from the grid $\{10^{-6}, 10^{-5}, 10^{-4}, 10^{-3}, 10^{-2}, 10^{-1}, 0, 1\}$. We fix the norms defining the feature-label metric and the adversarial attacks to $\ell_1$- and $\ell_2$-norms, respectively. The parameter $\kappa$ (\textit{cf.}~Definition~\ref{def:feature-label}) is cross-validated from the grid $\{1, \sqrt{n}, n\}$, and since $n$ is the number of features, this grid includes cases where label uncertainty is equivalent to uncertainty of a single feature ($\kappa = 1$), label uncertainty is equivalent to uncertainty of all features combined ($\kappa = n$), and an intermediary case ($\kappa = \sqrt{n}$). The case of ignoring label uncertainty ($\kappa = \infty$) is purposely not included in the grid, since one of the key reasons behind robust overfitting is that while \texttt{ARO} is equivalent to a distributionally robust model, the underlying ambiguity is only around the features (\textit{cf.} our discussion in \S\ref{section:related}). All simulated adversarial attacks are worst-case $\ell_p$-attacks that are instance-wise at test time, and the experiments assume we know the strength $\alpha$ and norm $\ell_p$ of the adversarial attacks. 

All experiments are conducted in Julia and executed on Intel Xeon 2.66GHz processors with 16GB memory in single-core mode. We use MOSEK's exponential cone optimizer to solve all problems. To interpret the results accurately, recall that \texttt{DRO+ARO} and \texttt{DRO+ARO+Aux} are the DR models that we propose. Note also that \texttt{ERM}, \texttt{ARO}, and \texttt{DRO+ARO} do not utilize auxiliary data, while \texttt{DRO+ARO+Aux} and \texttt{ARO+Aux} have access to the same auxiliary datasets across all experiments (\textit{i.e.}, we do not sample different auxiliary distributions for different methods to ensure that our comparisons are made \textit{ceteris paribus}). Moreover, while \texttt{ARO} does not have access to auxiliary data, one can interpret \texttt{ARO+Aux} as a generalization of \texttt{ARO} that also has access to auxiliary datasets since it takes a mixture (with mixture weight $\omega$) of the empirical dataset with the auxiliary dataset. For example, $\omega = 1$ would simply revise \texttt{ARO} by appending the empirical dataset with the auxiliary dataset.

\subsection{UCI Datasets (Auxiliary Data is Synthetically Generated)}\label{sec_uci}
We compare the out-of-sample error rates of each method on 10 UCI datasets for binary classification~\citep{kelly2023uci}. For each dataset, we run 10 simulations as follows: \textit{(i)} Select $40\%$ of the data as a test set ($N_{\mathrm{te}} \propto 0.4$); \textit{(ii)} Sample $25\%$ of the remaining to form a training set ($N \propto 0.6 \cdot 0.25$); \textit{(iii)}  The rest ($\widehat{N} \propto 0.6 \cdot 0.75$) is used to fit a synthetic generator Gaussian Copula from the SDV package~\citep{SDV_paper}, which is then used to generate auxiliary data. The mean errors on the test set are reported in Table~\ref{tab:UCI} for $\ell_2$-attacks of strength $\alpha = 0.05$. The best error is always achieved by \texttt{DRO+ARO+Aux}, followed by \texttt{DRO+ARO}, \texttt{DRO+Aux}, \texttt{ARO}, \texttt{ERM}, respectively. In our appendices, we report similar results for attack strengths $\alpha \in \{0, 0.05, 0.2\}$, and share data preprocessing details and standard deviations of out-of-sample errors.
\begin{table*}[t]
\centering
\caption{Out-of-sample errors of UCI experiments with $\ell_2$-attacks of strength $\alpha = 0.05$.}
\begin{tabular}{lcccccc}
\toprule
Data   &  \texttt{ERM} & \texttt{ARO} & \texttt{ARO+Aux} & \texttt{DRO+ARO} & \texttt{DRO+ARO+Aux} \\
\midrule 
    {absent} & 44.02\%  & 38.82\%  & 35.95\%  &  34.22\%  & \textbf{32.64\%}\\
    {anneal} & 18.08\%  & 16.61\%  &  14.97\%  & 13.50\%  & \textbf{12.78\%}  \\
    {audio} & 21.43\%  & 21.54\% & 17.03\%  & 11.76\%  & \textbf{\phantom{0}9.01\%}  \\
      {breast-c} & \phantom{0}4.74\%  & \phantom{0}4.93\% & \phantom{0}3.87\% & \phantom{0}3.06\% & \textbf{\phantom{0}2.52\%}  \\
     {contrac} & 44.14\%  & 42.86\% & 40.98\%  & 40.00\%  & \textbf{39.65\%}  \\
     {derma}  & 15.97\%  & 16.46\%  & 13.47\% & 12.78\%  & \textbf{10.84\%}   \\
     {ecoli}  & 16.30\%  & 14.67\%  & 13.26\%  &  11.11\%  & \textbf{\phantom{0}9.78\%} \\
     {spam} & 11.35\%  & 10.23\%  & 10.16\%  & 9.83\%  & \textbf{9.81\%} \\
     {spect}  &  33.75\%  & 29.69\%  &  25.78\%  & 25.47\%  & \textbf{21.56\%}  \\
     {p-tumor}  & 21.84\%  & 20.81\%  & 17.35\%  & 16.18\%  & \textbf{14.78\%}  \\
\bottomrule
\end{tabular}
\label{tab:UCI}
\end{table*}

\subsection{MNIST/EMNIST Datasets (Auxiliary Data is Out-of-Domain)}
We use the MNIST digits dataset~\citep{mnist} to classify whether a digit is 1 or 7. For an auxiliary dataset, we use the larger EMNIST digits dataset~\citep{emnist}, whose authors summarize that this dataset has additional samples collected from a different group of individuals (high school students). Since EMNIST digits include MNIST digits, we remove the latter from the EMNIST dataset. We simulate the following 25 times: \textit{(i)} Sample 1,000 instances from the MNIST dataset as a training set; \textit{(ii)} The remaining instances in the MNIST dataset are our test set; \textit{(iii)} Sample 1,000 instances from the \mbox{EMNIST} dataset as an auxiliary dataset. Table~\ref{tab:mnist} reports the mean out-of-sample errors in various adversarial attack regimes. The results are analogous to the UCI experiments. Additionally, note that in the absence of adversarial attacks ($\alpha = 0$), \texttt{DRO+ARO} coincides with the Wasserstein LR model of~\cite{NIPS2015}, and the results thus imply that even without adversarial attacks, we can improve the state-of-the-art DR model by revising its ambiguity set in light of auxiliary data.

\begin{table*}[t]
\caption{Out-of-sample errors of MNIST/EMNIST experiments with various attacks.}
\centering
\begin{tabular}{lcccccc}
\toprule
Attack   &  \texttt{ERM}  &  \texttt{ARO} & \texttt{ARO+Aux} & \texttt{DRO+ARO} & \texttt{DRO+ARO+Aux} \\
\midrule 
    No attack ($\alpha = 0$) & \phantom{10}1.55\%  & \phantom{10}1.55\%  & \phantom{10}1.19\%  &  \phantom{10}0.64\%  & \phantom{10}\textbf{0.53\%}\\
    $\ell_1$ ($\alpha = 68/255$) & \phantom{10}2.17\% & \phantom{10}1.84\% & \phantom{10}1.33\% & \phantom{10}0.66\% & \phantom{10}\textbf{0.57\%} \\
    $\ell_2$ ($\alpha = 128/255$) & \phantom{1}99.93\% & \phantom{10}3.36\% & \phantom{10}2.54\% & \phantom{10}2.40\% & \phantom{10}\textbf{2.12\%}  \\
    $\ell_\infty$ ($\alpha = 8/255$) & 100.00\% & \phantom{10}2.60\% & \phantom{10}2.38\% & \phantom{10}2.20\% & \phantom{10}\textbf{1.95\%}  \\
\bottomrule
\end{tabular}
\label{tab:mnist}
\end{table*}

\subsection{Artificial Experiments (Auxiliary Data is Perturbed)}
\begin{figure*}[!t]
    \centering
        \includegraphics[width=0.4\textwidth]{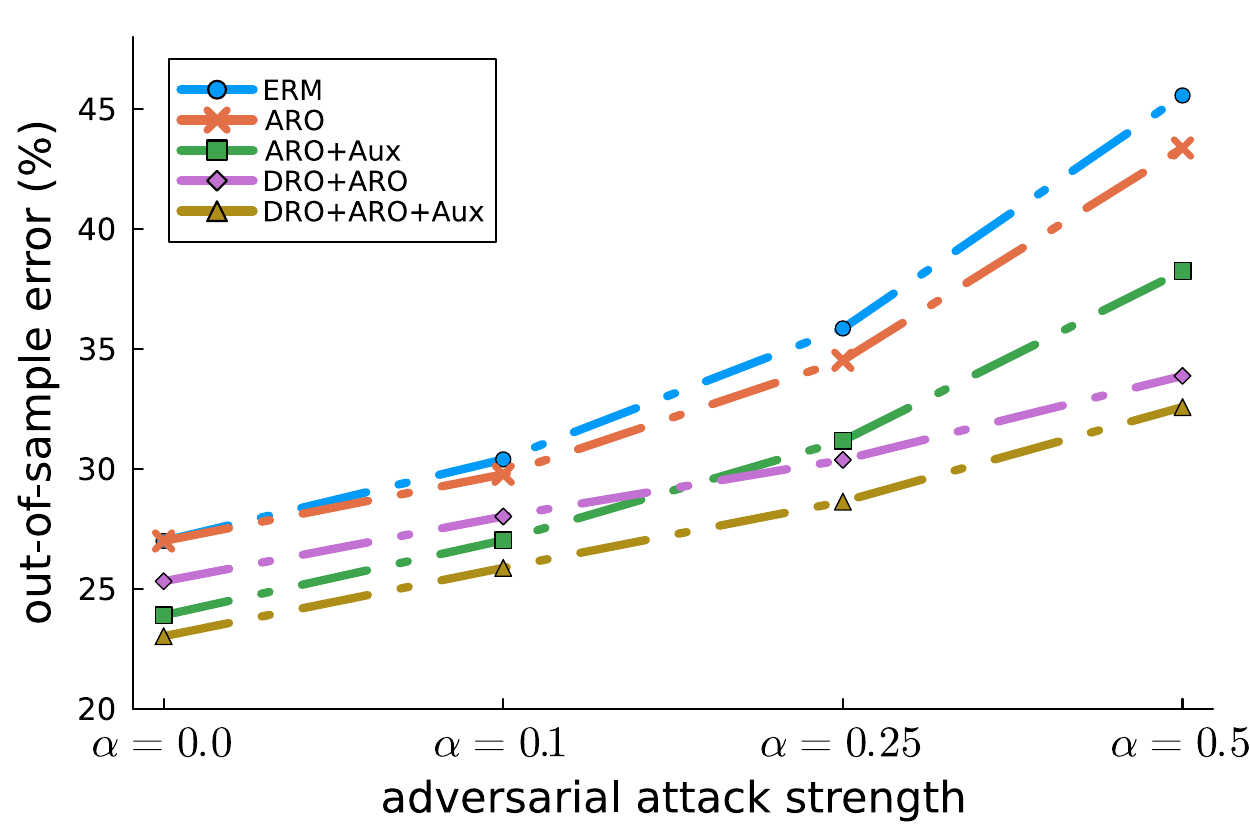}
        \hspace{1cm}
        \includegraphics[width=0.4\textwidth]{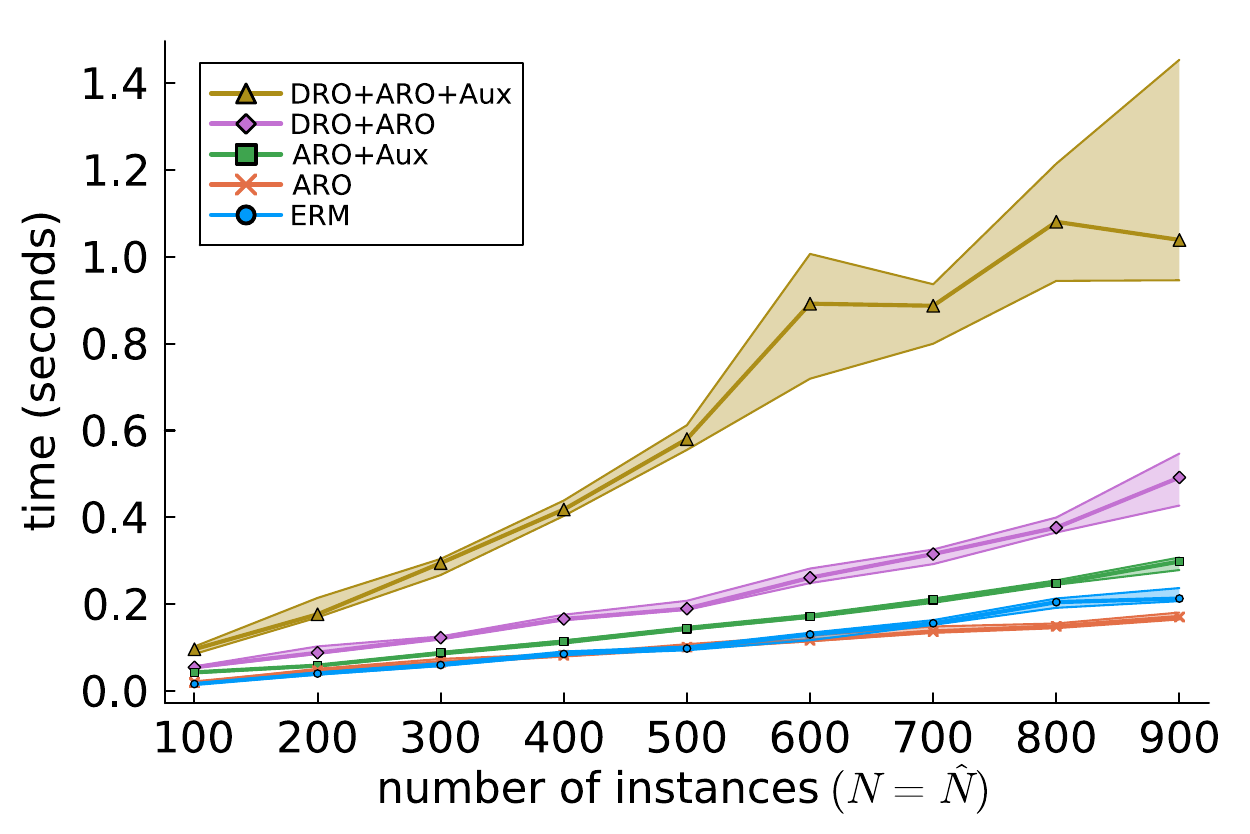}
    \caption{\textit{Out-of-sample errors under varying attack strengths (left) and runtimes under varying numbers of empirical and auxiliary instances (right) of artificial experiments.}}
    \label{fig:toy_example}
\end{figure*}

We generate empirical and auxiliary datasets by controlling their data-generating distributions (more details in the appendices). We simulate $25$ cases, each with $N = 100$ training, $\widehat{N} = 200$ auxiliary, and $N_{\mathrm{te}} = 10,000$ test instances and $n=100$ features. The performance of benchmark models with varying attacks is available in Figure~\ref{fig:toy_example} (left). \texttt{ERM} provides the worst performance, followed by \texttt{ARO}. The relationship between \texttt{DRO+ARO} and \texttt{ARO+Aux} is not monotonic: the former works better in larger attack regimes, conforming to the robust overfitting phenomenon. Finally, \texttt{Adv+DRO+Aux} always performs the best. We conduct a similar simulation for datasets with $n=100$, and gradually increase $N = \widehat{N}$ to report median ($50\% \pm 15\%$ quantiles shaded) runtimes of each method (\textit{cf.}~Figure~\ref{fig:toy_example}, right). The fastest methods is \texttt{ARO}, followed by \texttt{ERM}, \texttt{ARO+Aux}, \texttt{DRO+ARO}, and \texttt{DRO+ARO+Aux}. The slowest is \texttt{DRO+ARO+Aux}, but the runtime scales graciously. 

\section{CONCLUSIONS}
We formulate the distributionally robust counterpart of adversarially robust LR. Additionally, we demonstrate how to effectively utilize appropriately curated auxiliary data by intersecting Wasserstein balls. We illustrate the superiority of the proposed approach in terms of out-of-sample performance and confirm its scalability in practical settings.

From a theoretical point of view, it would be natural to extend our work to more loss functions, as is typical for DRO studies stemming from LR. To be able to optimize~\ref{safe} for very large-dimensional datasets, an interesting future work is to investigate first-order optimization methods that do not rely on off-the-shelf solvers. We also believe a cutting-plane method tailored for~\ref{safe} can also help us scale this problem for large-dimensional problems, since we would avoid monolithically optimizing a problem with $\mathcal{O}(N \cdot \widehat{N})$ exponential cone constraints. 

From a practical perspective, the ability to optimize~\ref{safe} in high-dimensional settings would also enable fine-tuning the final layer of a pre-trained neural network for binary classification, since this corresponds to logistic regression under a sigmoid activation. In our image recognition experiments, we used the MNIST dataset, as EMNIST served as a natural choice for auxiliary data. Identifying a suitable auxiliary dataset for CIFAR~\citep{krizhevsky2009learning} could similarly support new experimental directions.

Finally, recent breakthroughs in foundation models naturally pose the question of whether our ideas in this work apply to these models. For example, \cite{ye2022zerogen} use a pre-trained language model (PLM) to generate synthetic pairs of text sequences and labels which are then used to train downstream models. It would be interesting to adapt our ideas to the text domain to explore robustness in the presence of two PLMs.

\begin{acknowledgements}
Aras Selvi's work was done during an internship at JP Morgan AI Research. The authors gratefully acknowledge the detailed and constructive feedback
provided by the anonymous reviewers and the anonymous
area chair. Revising the paper has substantially improved the quality of this work.
\end{acknowledgements}

\section*{Disclaimer}
This paper was prepared for informational purposes by the Artificial Intelligence Research group of JPMorgan Chase \& Co. and its affiliates (``JP Morgan'') and is not a product of the Research Department of JP Morgan. JP Morgan makes no representation and warranty whatsoever and disclaims all liability, for the completeness, accuracy or reliability of the information contained herein. This document is not intended as investment research or investment advice, or a recommendation, offer or solicitation for the purchase or sale of any security, financial instrument, financial product or service, or to be used in any way for evaluating the merits of participating in any transaction, and shall not constitute a solicitation under any jurisdiction or to any person, if such solicitation under such jurisdiction or to such person would be unlawful.

\bibliography{uai2025-template}

\newpage

\onecolumn

\title{Distributionally and Adversarially Robust Logistic Regression via Intersecting Wasserstein Balls \\
{\color{red!50!black}(Supplementary Material)}}
\maketitle
\appendix

\section{NOTATION}
Throughout the paper, bold lowercase letters denote vectors, while standard lowercase letters are reserved for scalars. A generic data instance is modeled as $\bm{\xi} = (\bm{x}, y) \in \Xi := \mathbb{R}^n \times \{-1, +1\}$. For any $p>0$, $\lVert \bm{x} \rVert_{p}$ denotes the $p$-norm $\left(\sum_{i=1}^n \lvert x_i \rvert^{p} \right)^{1/p}$ and $\lVert \bm{x}\rVert_{p^\star}$ is its dual norm where $1/p +1/p^\star = 1$ with the convention of $1/1 + 1/\infty = 1$. The set of probability distributions supported on $\Xi$ is denoted by $\mathcal{P}(\Xi)$. The Dirac measure supported on $\bm{\xi}$ is denoted by $\delta_{\bm{\xi}}$. The logloss is defined as $\ell_{\bm{\beta}}(\bm{x}, y) = \log(1+ \exp(-y \cdot \bm{\beta}^\top \bm{x}))$ and its associated univariate loss is $L(z) = \log(1 + \exp(-z))$ so that $L(y \cdot \bm{\beta}^\top \bm{x}) = \ell_{\bm{\beta}}(\bm{x}, y)$. The exponential cone is denoted by $\mathcal{K}_{\exp} = \mathrm{cl}(\{\bm{\omega} \in \mathbb{R}^3 : \omega_1 \geq \omega_2 \cdot \exp(\omega_3/\omega_2), \ \omega_1 > 0, \ \omega_2 > 0 \})$ where $\mathrm{cl}$ is the closure operator. The Lipschitz modulus of a univariate function $f$ is defined as $\mathrm{Lip}(f) := \sup_{z, z' \in \mathbb{R}} \left\{ \lvert f(z) - f(z')\rvert / \lvert z-z'\rvert : z \neq z \right\}$ whereas its effective domain is $\mathrm{dom}(f) = \{ z : f(z) < +\infty \}$. For a function $f:\mathbb{R}^n \mapsto \mathbb{R}$, its convex conjugate is $f^{*}(\bm{z}) = \sup_{\bm{x}\in\mathbb{R}^n} \bm{z}^\top \bm{x} - f(\bm{x})$. We reserve $\alpha \geq 0$ for the radii of the norms of adversarial attacks on the features and $\varepsilon \geq 0$ for the radii of distributional ambiguity sets.

\section{MISSING PROOFS}
\subsection{Proof of Observation~\ref{obs:prelim}}
For any $\bm{\beta} \in \mathbb{R}^n$, with standard robust optimization arguments~\citep{BTEGN09:rob_opt,BdH22:rob_opt}, we can show that
\begin{align*}
        & \underset{\bm{z}: \lVert \bm{z} \rVert_p \leq \alpha}{\sup}\{\ell_{\bm{\beta}}(\bm{x} + \bm{z}, y)\} \\
        \iff & \displaystyle  \underset{\bm{z}: \lVert \bm{z} \rVert_p \leq \alpha}{\sup} \{ \log(1 + \exp(-y \cdot \bm{\beta}^\top (\bm{x} + \bm{z}))) \} \\
    \iff & \displaystyle \log\left(1 + \exp\left(  \underset{\bm{z}: \lVert \bm{z} \rVert_p \leq \alpha}{\sup} \{ -y \cdot \bm{\beta}^\top (\bm{x} + \bm{z}) \}\right)\right) \\
    \iff & \displaystyle \log\left(1 + \exp\left(  - y\cdot \bm{\beta}^\top \bm{x} +  \alpha \cdot \underset{\bm{z}: \lVert \bm{z} \rVert_p \leq 1}{\sup} \{ -y \cdot \bm{\beta}^\top \bm{z} \}\right)\right) \\
    \iff & \displaystyle \log(1 + \exp(  - y\cdot \bm{\beta}^\top \bm{x} +  \alpha \cdot \lVert -y \cdot \bm{\beta} \rVert_{p^\star} )) \\
    \iff & \displaystyle \log(1 + \exp(  - y\cdot \bm{\beta}^\top \bm{x} +  \alpha \cdot \lVert \bm{\beta} \rVert_{p^\star} )),
    \end{align*}
where the first step follows from the definition of logloss, the second step follows from the fact that $\log$ and $\exp$ are increasing functions, the third step takes the constant terms out of the $\sup$ problem and exploits the fact that the optimal solution of maximizing a linear function will be at an extreme point of the $\ell_p$-ball, the fourth step uses the definition of dual norm, and finally, the redundant $-y\in \{-1,+1\}$ is omitted from the dual norm. We can therefore define the adversarial loss $\ell^{\alpha}_{\bm{\beta}}(\bm{x}, y) := \log(1 + \exp(  - y\cdot \bm{\beta}^\top \bm{x} +  \alpha \cdot \lVert \bm{\beta} \rVert_{p^\star} ))$ where $\alpha$ models the strength of the adversary, $\bm{\beta}$ is the decision vector, and $(\bm{x}, y)$ is an instance. Replacing ${\sup_{\bm{z}: \lVert \bm{z} \rVert_p \leq \alpha}}\{\ell_{\bm{\beta}}(\bm{x} + \bm{z}, y)\}$ in~\ref{advdro} with $\ell^{\alpha}_{\bm{\beta}}(\bm{x}, y)$ concludes the equivalence of the optimization problem.

Furthermore, to see $\mathrm{Lip}(L^{\alpha}) = 1$, firstly note that since $L^{\alpha}(z) = \log(1 + \exp(-z + \alpha\cdot \lVert \bm{\beta} \Vert_{p^\star}))$ is differentiable everywhere in $z$ and its gradient  ${L^{\alpha}}'$ is bounded everywhere, we have that $\mathrm{Lip}(L^{\alpha})$ is equal to $\sup_{z \in \mathbb{R}} \{ |{L^{\alpha}}'(z)| \}$. We thus have:
    \begin{align*}
        {L^{\alpha}}'(z) = \dfrac{-\exp(-z + \alpha \cdot \lVert \bm{\beta} \rVert_{p^\star})}{1 + \exp(-z + \alpha \cdot \lVert \bm{\beta} \rVert_{p^\star})} = \dfrac{-1}{1 + \exp(z - \alpha \cdot \lVert \bm{\beta} \rVert_{p^\star})} \in (-1, 0)
    \end{align*}
and $|{L^{\alpha}}'(z)| = \left[ 1 + \exp(z - \alpha \cdot \lVert \bm{\beta} \rVert_{p^\star}) \right]^{-1} \longrightarrow 1$ as $z \longrightarrow -\infty$.  \qed

\subsection{Proof of Corollary~\ref{corr:tractable_og}}
Observation~\ref{obs:prelim} lets us represent~\ref{advdro} as the DR counterpart of empirical minimization of $\ell^{\alpha}_{\bm{\beta}}$:
\begin{align}\label{adversarial}
    \begin{array}{cl}
        \displaystyle \underset{\bm{\beta}}{\mathrm{minimize}} & \displaystyle \underset{\mathbb{Q} \in \mathfrak{B}_{\varepsilon}({\mathbb{P}}_N)}{\sup} \quad  \mathbb{E}_{\mathbb{Q}} \left[  \ell^{\alpha}_{\bm{\beta}}(\bm{x},y) \right] \\
        \mathrm{subject\;to} & \bm{\beta} \in \mathbb{R}^n.
    \end{array}
\end{align}
Since the univariate loss $L^{\alpha}(z) := \log(1+ \exp(-z + \alpha \cdot \lVert \bm{\beta}\rVert_{p^\star}))$ satisfying the identity $L^{\alpha}(\langle y\cdot \bm{x}, \bm{\beta}\rangle) = \ell^{\alpha}_{\bm{\beta}}(\bm{x}, y)$ is Lipschitz continuous, Theorem 14~\textit{(ii)} of \cite{shafieezadeh2019regularization} is immediately applicable. We can therefore rewrite~\eqref{adversarial} as:
\begin{align*}
          \begin{array}{cll}
            \underset{\bm{\beta},\ \lambda, \ \bm{s}}{\mathrm{minimize}} & \displaystyle \lambda \cdot \varepsilon + \dfrac{1}{N} \sum_{i\in[N]} s_i &  \\
            \mathrm{subject\;to} & L^{\alpha}(\langle y^i \cdot \bm{x}, \bm{\beta}\rangle ) \leq s_i & \forall i \in [N] \\
            & L^{\alpha}(\langle -y^i \cdot \bm{x}, \bm{\beta}\rangle ) - \lambda \cdot \kappa \leq s_i & \forall i \in [N] \\
            & \mathrm{Lip}(L^\alpha) \cdot \lVert \bm{\beta} \rVert_{q^\star} \leq \lambda & \\
            & \bm{\beta} \in \mathbb{R}^n, \ \lambda \geq 0, \ \bm{s} \in \mathbb{R}^N.
        \end{array}
\end{align*}
Replacing $\mathrm{Lip}(L^\alpha) = 1$ and substituting the definition of $L^\alpha$ concludes the proof. \qed

\subsection{Proof of Proposition~\ref{prop:reformulate}}\label{appendix_proof_prop:reformulate}
We prove Proposition~\ref{prop:reformulate} by constructing the optimization problem in its statement. We will thus dualize the inner $\sup$-problem of~\ref{synth} for fixed $\bm{\beta}$. To this end, we present a sequence of reformulations to the inner problem and then exploit strong semi-infinite duality.

By interchanging $\bm{\xi} = (\bm{x}, y)$, we first rewrite the inner problem as
\begin{align*}
    \begin{array}{cll}
        \displaystyle \underset{\mathbb{Q}, \Pi, \widehat{\Pi}}{\mathrm{maximize}} & \displaystyle \int_{\bm{\xi} \in \Xi} \ell^{\alpha}_{\bm{\beta}}(\bm{\xi}) \mathbb{Q}(\diff \bm{\xi}) & \\[5mm]
        \mathrm{subject\;to} & \displaystyle \int_{\bm{\xi}, \bm{\xi'} \in \Xi^2} d(\bm{\xi}, \bm{\xi'}) \Pi(\diff \bm{\xi}, \diff \bm{\xi'}) \leq \varepsilon & \\[5mm]
        &\displaystyle \int_{\bm{\xi} \in \Xi} \Pi(\diff \bm{\xi}, \diff \bm{\xi'}) = \mathbb{P}_N(\diff \bm{\xi'}) & \forall \bm{\xi'} \in \Xi \\[5mm]
        &\displaystyle \int_{\bm{\xi'} \in \Xi} \Pi(\diff \bm{\xi}, \diff \bm{\xi'}) = \mathbb{Q}(\diff \bm{\xi}) & \forall \bm{\xi} \in \Xi \\[5mm]
        & \displaystyle \int_{\bm{\xi}, \bm{\xi'} \in \Xi^2} d(\bm{\xi}, \bm{\xi'}) \widehat{\Pi}(\diff \bm{\xi}, \diff \bm{\xi'}) \leq \widehat{\varepsilon} & \\[5mm]
        &\displaystyle \int_{\bm{\xi} \in \Xi} \widehat{\Pi}(\diff \bm{\xi}, \diff \bm{\xi'}) = \widehat{\mathbb{P}}_{\widehat{N}}(\diff \bm{\xi'}) & \forall \bm{\xi'} \in \Xi \\[5mm]
        &\displaystyle \int_{\bm{\xi'} \in \Xi} \widehat{\Pi}(\diff \bm{\xi}, \diff \bm{\xi'}) = \mathbb{Q}(\diff \bm{\xi}) & \forall \bm{\xi} \in \Xi \\[5mm]
        & \mathbb{Q} \in \mathcal{P}(\Xi), \; \Pi \in \mathcal{P}(\Xi^2), \; \widehat{\Pi} \in \mathcal{P}(\Xi^2).
    \end{array}
\end{align*}
Here, the first three constraints specify that $\mathbb{Q}$ and $\mathbb{P}_N$ have a Wasserstein distance bounded by $\varepsilon$ from each other, modeled via their coupling $\Pi$. The latter three constraints similarly specify that $\mathbb{Q}$ and $\widehat{\mathbb{P}}_{\widehat{N}}$ are at most $\widehat{\varepsilon}$ away from each other, modeled via their coupling $\widehat{\Pi}$. As $\mathbb{Q}$ lies in the intersection of two Wasserstein balls in~\ref{synth}, the marginal $\mathbb{Q}$ is shared between $\Pi$ and $\widehat{\Pi}$. We can now substitute the third constraint into the objective and the last constraint and obtain:
\begin{align*}
    \begin{array}{cll}
        \displaystyle \underset{\Pi, \widehat{\Pi}}{\mathrm{maximize}} & \displaystyle \int_{\bm{\xi} \in \Xi} \ell^{\alpha}_{\bm{\beta}}(\bm{\xi}) \int_{\bm{\xi'} \in \Xi} \Pi(\diff \bm{\xi}, \diff \bm{\xi'}) & \\[5mm]
        \mathrm{subject\;to} & \displaystyle \int_{\bm{\xi}, \bm{\xi'} \in \Xi^2} d(\bm{\xi}, \bm{\xi'}) \Pi(\diff \bm{\xi}, \diff \bm{\xi'}) \leq \varepsilon & \\[5mm]
        &\displaystyle \int_{\bm{\xi} \in \Xi} \Pi(\diff \bm{\xi}, \diff \bm{\xi'}) = \mathbb{P}_N(\diff \bm{\xi'}) & \forall \bm{\xi'} \in \Xi \\[5mm]
        & \displaystyle \int_{\bm{\xi}, \bm{\xi'} \in \Xi^2} d(\bm{\xi}, \bm{\xi'}) \widehat{\Pi}(\diff \bm{\xi}, \diff \bm{\xi'}) \leq \widehat{\varepsilon} & \\[5mm]
        &\displaystyle \int_{\bm{\xi} \in \Xi} \widehat{\Pi}(\diff \bm{\xi}, \diff \bm{\xi'}) = \widehat{\mathbb{P}}_{\widehat{N}}(\diff \bm{\xi'}) & \forall \bm{\xi'} \in \Xi \\[5mm]
        &\displaystyle \int_{\bm{\xi'} \in \Xi} \widehat{\Pi}(\diff \bm{\xi}, \diff \bm{\xi'}) = \int_{\bm{\xi'} \in \Xi} \Pi(\diff \bm{\xi}, \diff \bm{\xi'}) & \forall \bm{\xi} \in \Xi \\[5mm]
        & \Pi \in \mathcal{P}(\Xi^2), \ \widehat{\Pi} \in \mathcal{P}(\Xi^2).
    \end{array}
\end{align*}
Denoting by $\mathbb{Q}^i(\diff \bm{\xi}) := \Pi(\diff \bm{\xi} \mid \bm{\xi}^i)$ the conditional distribution of $\Pi$ upon the realization of $\bm{\xi'} = \bm{\xi}^i$ and exploiting the fact that ${\mathbb{P}}_N$ is a discrete distribution supported on the $N$ data points $\{\bm{\xi}^i\}_{i \in [N]}$, we can use the marginalized representation $ \Pi(\diff \bm{\xi},\diff \bm{\xi'}) =\frac{1}{N} \sum_{i=1}^N \delta_{\bm{\xi}^i}(\diff \bm{\xi}')\mathbb{Q}^i(\diff \bm{\xi})$. Similarly, we can introduce $\widehat{\mathbb{Q}}^i(\diff \bm{\xi})  := \widehat{\Pi}(\diff \bm{\xi} \mid \widehat{\bm{\xi}}^i) $ for $\{\widehat{\bm{\xi}}^i\}_{i \in [\widehat{N}]}$ to exploit the marginalized representation $ \widehat{\Pi}(\diff \bm{\xi},\diff \bm{\xi'}) =\frac{1}{\widehat{N}} \sum_{j=1}^{\widehat N} \delta_{\widehat{\bm{\xi}}^j}(\diff \bm{\xi}')\widehat{\mathbb{Q}}^j(\diff \bm{\xi})$. By using this marginalization representation, we can use the following simplification for the objective function:
\begin{align*}
\hspace{-0.5cm}
   \displaystyle \int_{\bm{\xi} \in \Xi} \ell^{\alpha}_{\bm{\beta}}(\bm{\xi}) \int_{\bm{\xi'} \in \Xi} \Pi(\diff \bm{\xi}, \diff \bm{\xi'}) \; & = \; \displaystyle \frac{1}{N} \sum_{i=1}^N  \int_{\bm{\xi} \in \Xi} \ell^{\alpha}_{\bm{\beta}}(\bm{\xi}) \int_{\bm{\xi'} \in \Xi} \delta_{\bm{\xi}^i}(\diff \bm{\xi}')\mathbb{Q}^i(\diff \bm{\xi}) \; = \; \displaystyle \frac{1}{N} \sum_{i=1}^N  \int_{\bm{\xi} \in \Xi} \ell^{\alpha}_{\bm{\beta}}(\bm{\xi}) \mathbb{Q}^i(\diff \bm{\xi}).
\end{align*}
Applying analogous reformulations to the constraints leads to the following reformulation of the inner $\sup$ problem of~\ref{synth}:
\begin{align*}
    \begin{array}{cll}
        \displaystyle \underset{\mathbb{Q}, \widehat{\mathbb{Q}}}{\mathrm{maximize}} & \displaystyle \dfrac{1}{N} \sum_{i=1}^N \int_{\bm{\xi} \in \Xi} \ell^{\alpha}_{\bm{\beta}}(\bm{\xi}) \mathbb{Q}^i(\diff \bm{\xi}) & \\[5mm]
        \mathrm{subject\;to} & \displaystyle \dfrac{1}{N} \sum_{i=1}^N \int_{\bm{\xi} \in \Xi} d(\bm{\xi}, \bm{\xi}^i) \mathbb{Q}^i (\diff \bm{\xi}) \leq \varepsilon & \\[5mm]
        & \displaystyle \dfrac{1}{\widehat{N}} \sum_{j=1}^{\widehat N} \int_{\bm{\xi} \in \Xi} d(\bm{\xi}, \widehat{\bm{\xi}}^j) \widehat{\mathbb{Q}}^j (\diff \bm{\xi}) \leq \widehat{\varepsilon} & \\[5mm]
        &\displaystyle \dfrac{1}{N} \sum_{i=1}^N \mathbb{Q}^i(\diff \bm{\xi}) = \dfrac{1}{\widehat{N}} \sum_{j=1}^{\widehat{N}} \widehat{\mathbb{Q}}^j(\diff \bm{\xi}) & \forall \bm{\xi} \in \Xi  \\[5mm]
        & \mathbb{Q}^i \in \mathcal{P}(\Xi), \ \widehat{\mathbb{Q}}^{j} \in \mathcal{P}(\Xi) & \forall i \in [N], \ \forall j \in [\widehat{N}].
    \end{array}
\end{align*}
We now decompose each $\mathbb{Q}^i$ into two measures corresponding to $y = \pm 1$, so that $\mathbb{Q}^i(\diff (\bm{x}, y)) = \mathbb{Q}_{+1}^i(\diff \bm{x})$ for $y = +1$ and $\mathbb{Q}^i(\diff (\bm{x}, y)) = \mathbb{Q}_{-1}^i(\diff \bm{x})$ for $y = -1$. We similarly represent each $\widehat{\mathbb{Q}}^j$ via $\widehat{\mathbb{Q}}_{+1}^j$ and $\widehat{\mathbb{Q}}_{-1}^j$ depending on $y$. Note that these new measures are not probability measures as they do not integrate to $1$, but non-negative measures supported on $\mathbb{R}^n$ (denoted $\in \mathcal{P}_{+}(\mathbb{R}^n)$). We get:
\begin{align*}
    \begin{array}{cll}
        \displaystyle \underset{\mathbb{Q}_{\pm 1}, \widehat{\mathbb{Q}}_{\pm 1}}{\mathrm{maximize}} & \displaystyle \dfrac{1}{N} \sum_{i=1}^N \int_{\bm{x} \in \mathbb{R}^n} [ \ell^{\alpha}_{\bm{\beta}}(\bm{x}, + 1) \mathbb{Q}_{+1}^i (\diff \bm{x}) + \ell^{\alpha}_{\bm{\beta}}(\bm{x}, -1)\mathbb{Q}_{-1}^i(\diff \bm{x}) ] & \\[5mm]
        \mathrm{subject\;to} & \displaystyle \dfrac{1}{N} \sum_{i=1}^N \int_{\bm{x} \in \mathbb{R}^n} [d( (\bm{x},+1), \bm{\xi}^i) \mathbb{Q}_{+1}^i(\diff \bm{x}) + d( (\bm{x},-1), \bm{\xi}^i) \mathbb{Q}_{-1}^i(\diff \bm{x})]  \leq \varepsilon & \\[5mm]
        & \displaystyle \dfrac{1}{\widehat{N}} \sum_{j=1}^{\widehat{N}} \int_{\bm{x} \in \mathbb{R}^n} [d( (\bm{x},+1), \widehat{\bm{\xi}}^j) \widehat{\mathbb{Q}}_{+1}^j(\diff \bm{x}) + d( (\bm{x},-1), \widehat{\bm{\xi}}^j) \widehat{\mathbb{Q}}_{-1}^j(\diff \bm{x})]  \leq \widehat{\varepsilon} & \\[5mm]
        & \displaystyle \int_{\bm{x} \in \mathbb{R}^n} \mathbb{Q}_{+1}^i(\diff \bm{x}) + \mathbb{Q}_{-1}^i(\diff \bm{x}) = 1 & \forall i \in [N] \\[5mm]
        & \displaystyle \int_{\bm{x} \in \mathbb{R}^n} \widehat{\mathbb{Q}}_{+1}^j(\diff \bm{x}) + \widehat{\mathbb{Q}}_{-1}^j(\diff \bm{x}) = 1 & \forall j \in [\widehat{N}] \\[5mm]
        &\displaystyle \dfrac{1}{N} \sum_{i=1}^N \mathbb{Q}^i_{+1}(\diff \bm{x}) = \dfrac{1}{\widehat{N}} \sum_{j=1}^{\widehat{N}} \widehat{\mathbb{Q}}^j_{+1}(\diff \bm{x}) & \forall \bm{x} \in \mathbb{R}^n  \\[5mm]
        &\displaystyle \dfrac{1}{N} \sum_{i=1}^N \mathbb{Q}^i_{-1}(\diff \bm{x}) = \dfrac{1}{\widehat{N}} \sum_{j=1}^{\widehat{N}} \widehat{\mathbb{Q}}^j_{-1}(\diff \bm{x}) & \forall \bm{x} \in \mathbb{R}^n \\[5mm]
        & \mathbb{Q}^i_{\pm 1} \in \mathcal{P}_{+}(\mathbb{R}^n), 
        \ \widehat{\mathbb{Q}}^j_{\pm 1} \in \mathcal{P}_{+}(\mathbb{R}^n) & \forall i \in [N], \ j \in [\widehat{N}].
    \end{array}
\end{align*}
Next, we explicitly write the definition of the metric $d(\cdot, \cdot)$ in the first two constraints as well as use auxiliary measures $\mathbb{A}_{\pm 1}\in\mathcal{P}_{+}(\mathbb{R}^n)$ to break down the last two equality constraints:

{\allowdisplaybreaks 
\begin{align*}
    \begin{array}{cll}
        \displaystyle \underset{\mathbb{A}_{\pm 1}, \mathbb{Q}_{\pm 1}, \widehat{\mathbb{Q}}_{\pm 1}}{\mathrm{maximize}} & \displaystyle \dfrac{1}{N} \sum_{i=1}^N \int_{\bm{x} \in \mathbb{R}^n} [ \ell^{\alpha}_{\bm{\beta}}(\bm{x}, + 1) \mathbb{Q}_{+1}^i (\diff \bm{x}) + \ell^{\alpha}_{\bm{\beta}}(\bm{x}, -1)\mathbb{Q}_{-1}^i(\diff \bm{x}) ] & \\[5mm]
        \mathrm{subject\;to} & \displaystyle \dfrac{1}{N} \int_{\bm{x} \in \mathbb{R}^n} \Big[  \kappa \cdot \sum_{i\in [N] : y^i = -1} \mathbb{Q}_{+1}^i(\diff \bm{x}) + \kappa \cdot \sum_{i\in [N] : y^i = +1} \mathbb{Q}_{-1}^i(\diff \bm{x}) +  \\[3mm]
        & 
        {\displaystyle \sum_{i = 1}^N \lVert \bm{x} - \bm{x}^i \rVert_{q} \cdot[ \mathbb{Q}_{+1}^i(\diff \bm{x}) + \mathbb{Q}_{-1}^i(\diff \bm{x}) ] \Big] \leq \varepsilon} & \\[5mm]
        & \displaystyle \dfrac{1}{\widehat{N}} \int_{\bm{x} \in \mathbb{R}^n} \Big[  \kappa \cdot \sum_{j\in [N] : \widehat{y}^j = -1} \widehat{\mathbb{Q}}_{+1}^j(\diff \bm{x}) + \kappa \cdot \sum_{j\in [N] : \widehat{y}^j = +1} \widehat{\mathbb{Q}}_{-1}^j(\diff \bm{x}) +  \\[3mm]
        & 
        {\displaystyle \sum_{j = 1}^{\widehat{N}} \lVert \bm{x} - \widehat{\bm{x}}^j\rVert_{q} \cdot [ \widehat{\mathbb{Q}}_{+1}^j(\diff \bm{x}) + \widehat{\mathbb{Q}}_{-1}^j(\diff \bm{x}) ] \Big] \leq \widehat{\varepsilon}} & \\[5mm]
        & \displaystyle \int_{\bm{x} \in \mathbb{R}^n} \mathbb{Q}_{+1}^i(\diff \bm{x}) + \mathbb{Q}_{-1}^i(\diff \bm{x}) = 1 & \forall i \in [N] \\[5mm]
        & \displaystyle \int_{\bm{x} \in \mathbb{R}^n} \widehat{\mathbb{Q}}_{+1}^j(\diff \bm{x}) + \widehat{\mathbb{Q}}_{-1}^j(\diff \bm{x}) = 1 & \forall j \in [\widehat{N}] \\[5mm]
        &\displaystyle \dfrac{1}{N} \sum_{i=1}^N \mathbb{Q}^i_{+1}(\diff \bm{x}) = \mathbb{A}_{+1}(\diff \bm{x}) & \forall \bm{x} \in \mathbb{R}^n  \\[5mm]
        &\displaystyle \dfrac{1}{\widehat{N}} \sum_{j=1}^{\widehat{N}} \widehat{\mathbb{Q}}^j_{+1}(\diff \bm{x}) = \mathbb{A}_{+1}(\diff \bm{x}) & \forall \bm{x} \in \mathbb{R}^n  \\[5mm]
        &\displaystyle \dfrac{1}{N} \sum_{i=1}^N \mathbb{Q}^i_{-1}(\diff \bm{x}) = \mathbb{A}_{-1}(\diff \bm{x}) & \forall \bm{x} \in \mathbb{R}^n  \\[5mm]
        &\displaystyle \dfrac{1}{\widehat{N}} \sum_{j=1}^{\widehat{N}} \widehat{\mathbb{Q}}^j_{-1}(\diff \bm{x}) = \mathbb{A}_{-1}(\diff \bm{x}) & \forall \bm{x} \in \mathbb{R}^n  \\[5mm]
        & \mathbb{A}_{\pm 1} \in \mathcal{P}_{+}(\mathbb{R}^n), \ \mathbb{Q}^i_{\pm 1} \in \mathcal{P}_{+}(\mathbb{R}^n), 
        \ \widehat{\mathbb{Q}}^j_{\pm 1} \in \mathcal{P}_{+}(\mathbb{R}^n) & \forall i \in [N], \ j \in [\widehat{N}].
    \end{array}
\end{align*}
}
The following semi-infinite optimization problem, obtained by standard algebraic duality, is a strong dual to the above problem since $\varepsilon, \widehat{\varepsilon} > 0$ \citep{shapiro2001duality}. \newpage

\begin{align*}
    \begin{array}{cll}
        \displaystyle \underset{\lambda, \widehat{\lambda}, \bm{s}, \widehat{\bm{s}}, p_{\pm 1}, \widehat{p}_{\pm 1}}{\mathrm{minimize}}  & \displaystyle \dfrac{1}{N}\left[ N\varepsilon \lambda + \widehat{N} \widehat{\varepsilon}  \widehat{\lambda} + \sum_{i=1}^N s_i + \sum_{j=1}^{\widehat{N}} \widehat{s}_j \right] & \\[5mm]
        \mathrm{subject\;to} & \displaystyle \kappa  \dfrac{1 - y^i}{2}  \lambda + \lambda \lVert \bm{x}^i - \bm{x} \rVert_{q}  + s_i + \dfrac{p_{+1}(x)}{N} \geq \ell^{\alpha}_{\bm{\beta}}(\bm{x}, + 1) & \forall i \in [N] , \ \forall \bm{x} \in \mathbb{R}^n \\[5mm] 
        & \displaystyle \kappa  \dfrac{1 - \widehat{y}^j}{2}  \widehat{\lambda} + \widehat{\lambda} \lVert \widehat{\bm{x}}^j - \bm{x} \rVert_{q} + \widehat{s}_j + \dfrac{\widehat{p}_{+1}(x)}{\widehat{N}} \geq 0 & \forall j \in [\widehat{N}] , \ \forall \bm{x} \in \mathbb{R}^n \\[5mm]
        & \displaystyle \kappa  \dfrac{1 + y^i}{2}  \lambda + \lambda \lVert \bm{x}^i -  \bm{x} \rVert_{q} + s_i + \dfrac{p_{-1}(x)}{N} \geq \ell^{\alpha}_{\bm{\beta}}(\bm{x}, - 1) & \forall i \in [N] , \ \forall \bm{x} \in \mathbb{R}^n \\[5mm]
         & \displaystyle \kappa  \dfrac{1 + \widehat{y}^j}{2}  \widehat{\lambda} + \widehat{\lambda} \lVert \widehat{\bm{x}}^j - \bm{x} \rVert_{q} + \widehat{s}_j + \dfrac{\widehat{p}_{-1}(x)}{\widehat{N}} \geq 0 & \forall j \in [\widehat{N}] , \ \forall \bm{x} \in \mathbb{R}^n \\[5mm]
         & p_{+1}(\bm{x}) + \widehat{p}_{+1}(\bm{x}) \leq 0 & \\[5mm]
         & p_{-1}(\bm{x}) + \widehat{p}_{-1}(\bm{x}) \leq 0 & \\[5mm]
        &\displaystyle \lambda \in \mathbb{R}_{+}, \ \widehat{\lambda} \in \mathbb{R}{+}, \ \bm{s} \in \mathbb{R}^{N}, \ \widehat{\bm{s}} \in \mathbb{R}^{\widehat{N}} & \\[5mm]
        &\displaystyle p_{\pm 1}: \mathbb{R}^n \mapsto \mathbb{R}, \  \widehat{p}_{\pm 1}: \mathbb{R}^n \mapsto \mathbb{R}.
    \end{array}
\end{align*}

To eliminate the (function) variables $p_{+1}$ and $\widehat{p}_{+1}$, we first summarize the constraints they appear
\begin{align*}
    & \begin{cases}
        p_{+1}(\bm{x}) \geq N \cdot \left[\ell^{\alpha}_{\bm{\beta}}(\bm{x}, + 1) - s_i - \lambda  \lVert \bm{x}^i - \bm{x} \rVert_{q} - \kappa  \dfrac{1 - y^i}{2}  \lambda \right] & \forall i \in [N], \ \forall \bm{x} \in \mathbb{R}^n \\[3mm]
        \widehat{p}_{+1}(\bm{x}) \geq \widehat{N} \cdot \left[ -\widehat{s}_j - \widehat{\lambda}  \lVert \widehat{\bm{x}}^j - \bm{x} \rVert_{q} - \kappa  \dfrac{1 - \widehat{y}^j}{2}  \widehat{\lambda} \right] & \forall j \in [\widehat{N}] , \ \forall \bm{x} \in \mathbb{R}^n \\
        p_{+1}(\bm{x}) + \widehat{p}_{+1}(\bm{x}) \leq 0 & \forall \bm{x} \in \mathbb{R}^n,
    \end{cases}
\end{align*}
and notice that this system is equivalent to the epigraph-based reformulation of the following constraint 
\begin{align*}
&& \ell^{\alpha}_{\bm{\beta}}(\bm{x}, + 1) - s_i - \lambda  \lVert \bm{x}^i - \bm{x} \rVert_{q} - \kappa  \dfrac{1 - y^i}{2}  \lambda + {\dfrac{\widehat{N}}{N} \cdot \left[ -\widehat{s}_j- \widehat{\lambda}  \lVert \widehat{\bm{x}}^j - \bm{x} \rVert_{q} - \kappa  \dfrac{1 - \widehat{y}^j}{2}  \widehat{\lambda} \right] \leq 0}&   \\
&&  \hfill \forall i \in [N], \ \forall j \in [\widehat{N}], \ \forall \bm{x} \in \mathbb{R}^n.
\end{align*}
We can therefore eliminate $p_{+1}$ and $\widehat{p}_{+1}$. We can also eliminate $p_{-1}$ and $\widehat{p}_{-1}$ since we similarly have:
\begin{align*}
    & \begin{cases}
        p_{-1}(\bm{x}) \geq N \cdot \left[\ell^{\alpha}_{\bm{\beta}}(\bm{x}, - 1) - s_i - \lambda \lVert \bm{x}^i - \bm{x} \rVert_{q} - \kappa  \dfrac{1 + y^i}{2}  \lambda \right] & \forall i \in [N], \ \forall \bm{x} \in \mathbb{R}^n \\[3mm]
        \widehat{p}_{-1}(\bm{x}) \geq \widehat{N} \cdot \left[ -\widehat{s}_j - \widehat{\lambda}  \lVert \widehat{\bm{x}}^j - \bm{x} \rVert_{q} - \kappa  \dfrac{1 + \widehat{y}^j}{2}  \widehat{\lambda} \right] & \forall j \in [\widehat{N}] , \ \forall \bm{x} \in \mathbb{R}^n \\
        p_{-1}(\bm{x}) + \widehat{p}_{-1}(\bm{x}) \leq 0 & \forall \bm{x} \in \mathbb{R}^n
    \end{cases}\\
 \iff &  \ell^{\alpha}_{\bm{\beta}}(\bm{x}, - 1) - s_i - \lambda  \lVert \bm{x}^i - \bm{x} \rVert_{q} - \kappa  \dfrac{1 + y^i}{2}  \lambda + {\dfrac{\widehat{N}}{N} \cdot \left[ -\widehat{s}_j- \widehat{\lambda}  \lVert \widehat{\bm{x}}^j - \bm{x} \rVert_{q} - \kappa  \dfrac{1 + \widehat{y}^j}{2}  \widehat{\lambda} \right] \leq 0} \\
& 
{\forall i \in [N], \ \forall j \in [\widehat{N}], \ \forall \bm{x} \in \mathbb{R}^n.}
\end{align*}

This trick of eliminating $p_{\pm 1}, \ \widehat{p}_{\pm 1}$ is due to the auxiliary distributions $\mathbb{A}_{\pm 1}$ that we introduced; without them, the dual problem is substantially harder to work with. We therefore obtain the following reformulation of the dual problem
\begin{align*}
    \begin{array}{cll}
        \displaystyle \underset{\lambda, \widehat{\lambda}, \bm{s}, \widehat{\bm{s}}}{\mathrm{minimize}}  & \displaystyle \dfrac{1}{N}\left[ N\varepsilon \lambda + \widehat{N} \widehat{\varepsilon}  \widehat{\lambda} + \sum_{i=1}^N s_i + \sum_{j=1}^{\widehat{N}} \widehat{s}_j \right] & \\[5mm]
        \mathrm{subject\;to} & \displaystyle \underset{\bm{x} \in \mathbb{R}^n}{\sup} \{\ell^{\alpha}_{\bm{\beta}}(\bm{x}, + 1) - \lambda  \lVert \bm{x}^i - \bm{x} \rVert_{q} - \dfrac{\widehat{N}}{N}  \widehat{\lambda}  \lVert \widehat{\bm{x}}^j - \bm{x} \rVert_{q}\} \leq \\[3mm]
        & 
        {\quad s_i + \kappa  \dfrac{1 - y^i}{2} \lambda + \dfrac{\widehat{N}}{N}\cdot \left[ \widehat{s}_j + \kappa  \dfrac{1 - \widehat{y}^j}{2} \widehat{\lambda} \right]} & \forall i \in [N], \ \forall j \in [\widehat{N}]  \\[5mm]
        & \displaystyle \underset{\bm{x} \in \mathbb{R}^n}{\sup}\{\ell^{\alpha}_{\bm{\beta}}(\bm{x}, - 1) - \lambda  \lVert \bm{x}^i - \bm{x} \rVert_{q} - \dfrac{\widehat{N}}{N} \widehat{\lambda}   \lVert \widehat{\bm{x}}^j - \bm{x} \rVert_{q}\} \leq \\[3mm]
        & 
        {\quad s_i + \kappa \dfrac{1 + y^i}{2} \lambda + \dfrac{\widehat{N}}{N}\cdot \left[ \widehat{s}_j + \kappa \dfrac{1 + \widehat{y}^j}{2} \widehat{\lambda} \right]} & \forall i \in [N], \ \forall j \in [\widehat{N}] \\[5mm]
        & \lambda \geq 0, \ \widehat{\lambda}\geq 0, \ \bm{s} \in \mathbb{R}^{N}_{+}, \ \widehat{\bm{s}} \in \mathbb{R}^{\widehat{N}}_{+},
    \end{array}
\end{align*}
where we replaced the $\forall \bm{x} \in \mathbb{R}^n$ with the worst case realizations by taking the suprema of the constraints over $\bm{x}$. We also added non-negativity on the definition of $\bm{s}$ and $\widehat{\bm{s}}$ which is without loss of generality since this is implied by the first two constraints, which is due to the fact that in the primal reformulation the ``integrates to $1$'' constraints (whose associated dual variables are $\bm{s}$ and $\widehat{\bm{s}}$) can be written as
\begin{align*}
    \begin{array}{lll}
 & \displaystyle \int_{\bm{x} \in \mathbb{R}^n} \mathbb{Q}_{+1}^i(\diff \bm{x}) + \mathbb{Q}_{-1}^i(\diff \bm{x}) \leq 1 & \forall i \in [N], \\[5mm]
  & \displaystyle \int_{\bm{x} \in \mathbb{R}^n} \widehat{\mathbb{Q}}_{+1}^j(\diff \bm{x}) + \widehat{\mathbb{Q}}_{-1}^j(\diff \bm{x}) \leq 1 & \forall j \in [\widehat{N}],
    \end{array}
\end{align*}
due to the objective pressure. Relabeling $\dfrac{\widehat{N}}{N} \widehat{\lambda}$ as $\widehat{\lambda}$ and $\dfrac{\widehat{N}}{N} \widehat{s}_j$ as $\widehat{s}_j$ simplifies the problem to:
\begin{align*}
    \begin{array}{cll}
        \displaystyle \underset{\lambda, \widehat{\lambda}, \bm{s}, \widehat{\bm{s}}}{\mathrm{minimize}}  & \displaystyle \varepsilon \lambda + \widehat{\varepsilon}  \widehat{\lambda} + \dfrac{1}{N}\sum_{i=1}^N s_i + \dfrac{1}{\widehat{N}}\sum_{i=1}^{\widehat{N}} \widehat{s}_j & \\[5mm]
        \mathrm{subject\;to} & \displaystyle \underset{\bm{x} \in \mathbb{R}^n}{\sup} \{\ell^{\alpha}_{\bm{\beta}}(\bm{x}, + 1) - \lambda  \lVert \bm{x}^i - \bm{x} \rVert_{q} - \widehat{\lambda}  \lVert \widehat{\bm{x}}^j - \bm{x} \rVert_{q}\} \leq \\[3mm]
        & 
        {\quad s_i + \kappa  \dfrac{1 - y^i}{2} \lambda + \widehat{s}_j + \kappa  \dfrac{1 - \widehat{y}^j}{2}  \widehat{\lambda}} & \forall i \in [N], \ \forall j \in [\widehat{N}]  \\[5mm]
        & \displaystyle \underset{\bm{x} \in \mathbb{R}^n}{\sup}\{\ell^{\alpha}_{\bm{\beta}}(\bm{x}, - 1) - \lambda  \lVert \bm{x}^i - \bm{x} \rVert_{q} - \widehat{\lambda}   \lVert \widehat{\bm{x}}^j - \bm{x} \rVert_{q}\} \leq \\[3mm]
        & 
        {\quad s_i + \kappa  \dfrac{1 + y^i}{2} \lambda + \widehat{s}_j + \kappa  \dfrac{1 + \widehat{y}^j}{2}  \widehat{\lambda}} & \forall i \in [N], \ \forall j \in [\widehat{N}] \\[5mm]
        & \lambda \geq 0, \ \widehat{\lambda}\geq 0, \ \bm{s} \in \mathbb{R}^{N}_{+}, \ \widehat{\bm{s}} \in \mathbb{R}^{\widehat{N}}_{+}.
    \end{array}
\end{align*}
Combining all the $\sup$-constraints with the help of an an auxiliary parameter $l \in \{-1, 1\}$ and replacing this problem with the inner problem of~\ref{synth} concludes the proof.\qed

\subsection{Proof of Proposition~\ref{prop:np-hard}}\label{proof prop:np-hard}
We first present a technical lemma that will allow us to rewrite a specific type of difference of convex functions (DC) maximization problem that appears in the constraints of~\ref{synth}. 
Rewriting such DC maximization problems is one of the key steps in reformulating Wasserstein DRO problems, and our lemma is inspired from~\citet[Lemma 47]{shafieezadeh2019regularization},~\citet[Theorem 3.8]{shafieezadeh2023new}, and~\citet[Lemma 1]{belbasi2023s} who reformulate maximizing the difference of a convex function and a norm. 
Our DRO problem~\ref{synth}, however, comprises two ambiguity sets, hence the DC term that we investigate will be the difference between a convex function and the sum of \textit{two norms}. 
This requires a new analysis and we will see that~\ref{synth} is NP-hard due to this additional difficulty.
\begin{lemma}\label{lemma:main}
    Suppose that $L: \mathbb{R} \mapsto \mathbb{R}$ is a closed convex function, and $\lVert \cdot \rVert_{q}$ is a norm. For vectors $\bm{\omega}, \bm{a}, \widehat{\bm{a}} \in \mathbb{R}^n$ and scalars $\lambda, \widehat{\lambda} > 0$, we have:
    \begin{align*}
        & \displaystyle \underset{\bm{x} \in \mathbb{R}^n}{\sup} \{L(\bm{\omega}^\top \bm{x}) - \lambda \lVert \bm{a} - \bm{x} \rVert_{q} - \widehat{\lambda} \lVert \bm{\widehat{a}} - \bm{x} \rVert_{q}\} \\
=  \;     & \underset{\theta \in \mathrm{dom}(L^*)}{\sup} \ - L^{*}(\theta) + \theta\cdot\bm{\omega}^\top \bm{a} + \theta \cdot \underset{\bm{z} \in \mathbb{R}^n}{\inf} \{ \bm{z}^\top (\widehat{\bm{a}} - \bm{a}) \ : \ \lvert\theta\rvert \cdot \lVert \bm{\omega} - \bm{z} \rVert_{q^\star} \leq \lambda, \ \lvert\theta\rvert \cdot \lVert \bm{z} \rVert_{q^\star} \leq \widehat{\lambda} \}
    \end{align*}
\end{lemma}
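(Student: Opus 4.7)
The plan is to linearize $L(\bm{\omega}^\top\bm{x})$ via the Fenchel biconjugate identity and then dualize the resulting concave inner program using the infimal convolution formula for the conjugate of a sum. Since $L$ is closed convex, $L(u) = \sup_{\theta\in\mathrm{dom}(L^*)}\{\theta u - L^*(\theta)\}$; substituting $u = \bm{\omega}^\top\bm{x}$ and swapping the two suprema (valid because a sup of sups is a sup over the product set) reduces the left-hand side to $\sup_{\theta\in\mathrm{dom}(L^*)}\{-L^*(\theta) + \Phi(\theta)\}$, where $\Phi(\theta) := \sup_{\bm{x}\in\mathbb{R}^n}\{\theta\bm{\omega}^\top\bm{x} - \lambda\lVert\bm{a}-\bm{x}\rVert_q - \widehat{\lambda}\lVert\widehat{\bm{a}}-\bm{x}\rVert_q\}$.

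For fixed $\theta$, I would treat $\Phi(\theta)$ as a Fenchel conjugate evaluated at $-\theta\bm{\omega}$. The change of variables $\bm{u} = \bm{a}-\bm{x}$ extracts the affine constant $\theta\bm{\omega}^\top\bm{a}$ and gives $\Phi(\theta) = \theta\bm{\omega}^\top\bm{a} + (f+g)^*(-\theta\bm{\omega})$ with $f(\bm{u}) := \lambda\lVert\bm{u}\rVert_q$ and $g(\bm{u}) := \widehat{\lambda}\lVert(\widehat{\bm{a}}-\bm{a})+\bm{u}\rVert_q$. Since $\mathrm{dom}\,f = \mathrm{dom}\,g = \mathbb{R}^n$, Rockafellar's qualification condition holds trivially and the infimal convolution identity applies with equality, yielding $(f+g)^*(\bm{c}) = \inf_{\bm{z}_1+\bm{z}_2=\bm{c}}\{f^*(\bm{z}_1) + g^*(\bm{z}_2)\}$. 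Standard conjugate calculations give $f^*(\bm{z}_1) = 0$ on $\{\lVert\bm{z}_1\rVert_{q^\star}\leq\lambda\}$ (and $+\infty$ otherwise), and after absorbing the shift via $\bm{v} = (\widehat{\bm{a}}-\bm{a})+\bm{u}$, $g^*(\bm{z}_2) = \bm{z}_2^\top(\bm{a}-\widehat{\bm{a}})$ on $\{\lVert\bm{z}_2\rVert_{q^\star}\leq\widehat{\lambda}\}$ (and $+\infty$ otherwise). Eliminating $\bm{z}_1 = -\theta\bm{\omega}-\bm{z}_2$ via the coupling equality and then reparameterizing $\bm{z}_2 = -\theta\bm{z}$ (a bijection of $\mathbb{R}^n$ for $\theta\neq 0$, and trivial for $\theta=0$) converts the norm constraints into $|\theta|\lVert\bm{\omega}-\bm{z}\rVert_{q^\star}\leq\lambda$ and $|\theta|\lVert\bm{z}\rVert_{q^\star}\leq\widehat{\lambda}$, and turns the linear objective $\bm{z}_2^\top(\bm{a}-\widehat{\bm{a}})$ into $\theta\bm{z}^\top(\widehat{\bm{a}}-\bm{a})$. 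Reassembling with $-L^*(\theta) + \theta\bm{\omega}^\top\bm{a}$ and taking $\sup_\theta$ produces the claimed right-hand side.

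The main technical point is justifying equality in the infimal convolution (no duality gap), which reduces to Rockafellar's relative-interior qualification and is immediate since $f$ and $g$ have full effective domain. A secondary bookkeeping matter concerns the sign of $\theta$: the displayed right-hand side is to be read as $\inf_{\bm{z}}\{\theta\bm{z}^\top(\widehat{\bm{a}}-\bm{a}) : |\theta|\lVert\bm{\omega}-\bm{z}\rVert_{q^\star}\leq\lambda,\ |\theta|\lVert\bm{z}\rVert_{q^\star}\leq\widehat{\lambda}\}$, since the feasible region depends only on $|\theta|$ (as $\lVert\pm\theta\bm{v}\rVert_{q^\star}=|\theta|\lVert\bm{v}\rVert_{q^\star}$) while the signed $\theta$ is carried by the objective through the substitution $\bm{z}_2 = -\theta\bm{z}$, so no separate argument is required for $\theta<0$.
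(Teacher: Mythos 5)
Your proof is correct, and it takes a genuinely different route from the paper's. The paper proves the identity by invoking Toland's duality for DC programs, which requires it to separately compute the conjugate of the composition $(L\circ f_{\bm\omega})^*$ via a Lagrangian (Fenchel-composition) argument and then combine it with the infimal convolution of $g_1^*$ and $g_2^*$. You instead linearize $L$ upfront using the biconjugate identity $L = L^{**}$, swap the two suprema, and are then left with a single conjugate-of-a-sum $(f+g)^*$ computation, which you resolve by the same infimal convolution theorem. Both routes rest on the full-domain qualification for the infimal convolution formula, which you correctly note is trivially satisfied; your route avoids Toland duality and the separate composition-conjugate step entirely, so it is arguably more elementary and shorter. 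You also correctly flag a genuine notational subtlety in the lemma's statement: since the feasible set of the inner $\inf$ shrinks as $\lvert\theta\rvert$ grows, pulling $\theta$ outside the infimum is only literally valid for $\theta\geq 0$, and the identity should be read with $\theta$ inside the infimum (i.e.\ as $\inf_{\bm z}\{\theta\,\bm z^\top(\widehat{\bm a}-\bm a):\ldots\}$) to hold for general $\mathrm{dom}(L^*)$. The same implicit reading is present in the paper's own last rewriting step, where the relabeling $\bm z_i\mapsto\theta\bm z_i$ produces $\theta$ multiplying the objective inside the $\inf$; the downstream use in the adjustable-robust reformulation employs precisely this reading, so the subtlety has no effect on the paper's results, but you are right to make it explicit.
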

\begin{proof}
    We denote by $f_{\bm{\omega}}(\bm{x}) = \bm{\omega}^\top \bm{x}$ and by $g$ the convex function $g(\bm{x}) = g_1(\bm{x}) + g_2(\bm{x})$ where $g_1(\bm{x}) := \lambda \lVert \bm{a} - \bm{x} \rVert_{q}$ and $g_2(\bm{x}):= \widehat{\lambda} \lVert \widehat{\bm{a}} - \bm{x} \rVert_{q}$, and reformulate the $\sup$ problem as
\begin{align*}
    \underset{\bm{x}\in \mathbb{R}^n}{\sup} \; L(\bm{\omega}^\top \bm{x}) - g(\bm{x}) \; = \;  \underset{\bm{x}\in \mathbb{R}^n}{\sup} \; (L \circ f_{\bm{\omega}}) (\bm{x}) - g(\bm{x}) \; = \; \underset{\bm{z}\in \mathbb{R}^n}{\sup} \;  g^{*}(\bm{z}) - (L \circ f_{\bm{\omega}})^{*}(\bm{z}),
\end{align*}
where the first identity follows from the definition of composition and the second identity employs Toland's duality~\citep{toland1978duality} to rewrite difference of convex functions optimization.

By using infimal convolutions \citep[Theorem 16.4]{R97:convex_analysis}, we can reformulate $g^{*}$:
\begin{align*}
    g^{*}(\bm{z}) &= \underset{\bm{z}_1, \bm{z}_2}{\inf} \{g_1^{*}(\bm{z}_1) + g_2^{*}(\bm{z}_2) \ : \ \bm{z}_1 + \bm{z}_2 = \bm{z}\} \\ 
    & = \underset{\bm{z}_1, \bm{z}_2}{\inf} \{\bm{z}_1^\top \bm{a} + \bm{z}_2^\top \widehat{\bm{a}} \ : \ \bm{z}_1 + \bm{z}_2 = \bm{z}, \ \lVert \bm{z}_1 \rVert_{q^{\star}} \leq \lambda, \ \lVert \bm{z}_2 \rVert_{q^{\star}} \leq \widehat{\lambda} \}, 
\end{align*}
where the second step uses the definitions of $g_1^{*}(\bm{z}_1)$ and $g_2^{*}(\bm{z}_2)$. Moreover, we show
\begin{align*}
    (L \circ f_{\bm{\omega}})^{*}(\bm{z}) \; & = \; \sup_{\bm{x} \in \mathbb{R}^n} \ \bm{z}^\top \bm{x} - L(\bm{\omega}^\top \bm{x}) \\
    & = \; \sup_{t\in \mathbb{R}, \ \bm{x}\in\mathbb{R}^n} \{ \bm{z}^\top \bm{x} - L(t) \ : \ t = \bm{\omega}^\top \bm{x}\} \\
    & = \; \inf_{\theta \in \mathbb{R}} \  \sup_{t\in \mathbb{R}, \ \bm{x}\in\mathbb{R}^n}  \bm{z}^\top \bm{x} - L(t) - \theta \cdot (\bm{\omega}^\top \bm{x} - t) \\
    & = \; \inf_{\theta \in \mathbb{R}} \  \sup_{t\in \mathbb{R}}  \ \sup_{\bm{x}\in\mathbb{R}^n} (\bm{z} - \theta \cdot \bm{\omega})^\top \bm{x} - L(t) + \theta \cdot t \\
    & = \; \inf_{\theta \in \mathbb{R}} \  \sup_{t\in \mathbb{R}} \begin{cases}
    -L(t) + \theta \cdot t & \text{if } \theta \cdot \bm{\omega} = \bm{z}\\
    + \infty & \text{otherwise.}
    \end{cases} 
    \\
    & = \; \inf_{\theta \in \mathbb{R}} \begin{cases}
    L^*(\theta) & \text{if } \theta \cdot \bm{\omega} = \bm{z}\\
    + \infty & \text{otherwise.}
    \end{cases} 
    \\
    & = \; \inf_{\theta \in \mathrm{dom}(L^{*})} \{ L^*(\theta) \ : \ \theta\cdot\bm{\omega} = \bm{z} \},
\end{align*}
where the first identity follows from the definition of the convex conjugate, the second identity introduces an additional variable to make this an equality-constrained optimization problem, the third identity takes the Lagrange dual (which is a strong dual since the problem maximizes a concave objective with a single equality constraint), the fourth identity rearranges the expressions, the fifth identity exploits unboundedness of $\bm{x}$, the sixth identity uses the definition of convex conjugates and the final identity replaces the feasible set $\theta \in \mathbb{R}$ with the domain of $L^\star$ without loss of generality as this is an $\inf$-problem.

Replacing the conjugates allows us to conclude that the maximization problem equals
\begin{align*}
\hspace{-0.8cm}
\begin{array}{ll}
 &\underset{\bm{z} \in \mathbb{R}^n}{\sup} \ g^{*}(\bm{z}) + \underset{\theta \in \mathrm{dom}(L^*)}{\sup} \{- L^{*}(\theta) \ : \ \theta \cdot \bm{\omega} = \bm{z}\}\\
=   \; &  \underset{\bm{z} \in \mathbb{R}^n, \ \theta \in \mathrm{dom}(L^*)}{\sup} \{ g^{*}(\bm{z}) - L^{*}(\theta) \ : \ \theta \cdot \bm{\omega} = \bm{z} \}\\
=   \; & \underset{\theta \in \mathrm{dom}(L^*)}{\sup} \ g^{*}(\theta \cdot \bm{\omega}) - L^{*}(\theta) \\
=   \; & \underset{\theta \in \mathrm{dom}(L^*)}{\sup} \ - L^{*}(\theta) + \underset{\bm{z}_1, \bm{z}_2 \in \mathbb{R}^{n}}{\inf} \{ \bm{z}_1^\top \bm{a} + \bm{z}_2^\top \widehat{\bm{a}} \ : \ \bm{z}_1 + \bm{z}_2 = \theta\cdot \bm{\omega} , \ \lVert \bm{z}_1 \rVert_{q^\star} \leq \lambda, \ \lVert \bm{z}_2 \rVert_{q^\star} \leq \widehat{\lambda} \} \\
=   \; & \underset{\theta \in \mathrm{dom}(L^*)}{\sup} \ - L^{*}(\theta) + \theta \cdot \underset{\bm{z}_1, \bm{z}_2 \in \mathbb{R}^{n}}{\inf} \{ \bm{z}_1^\top \bm{a} + \bm{z}_2^\top \widehat{\bm{a}} \ : \ \bm{z}_1 + \bm{z}_2 = \bm{\omega} , \ \lvert\theta\rvert \cdot \lVert \bm{z}_1 \rVert_{q^\star} \leq \lambda, \ \lvert\theta\rvert \cdot \lVert \bm{z}_2 \rVert_{q^\star} \leq \widehat{\lambda} \} \\
=   \; & \underset{\theta \in \mathrm{dom}(L^*)}{\sup} \ - L^{*}(\theta) + \theta \cdot \bm{\omega}^\top \bm{a} + \theta \cdot \underset{\bm{z} \in \mathbb{R}^n}{\inf} \{ \bm{z}^\top (\widehat{\bm{a}} - \bm{a}) \ : \ \lvert\theta\rvert \cdot \lVert \bm{\omega} - \bm{z} \rVert_{q^\star} \leq \lambda, \ \lvert\theta\rvert \cdot \lVert \bm{z} \rVert_{q^\star} \leq \widehat{\lambda} \}.
\end{array}
\end{align*}
Here, the first identity follows from writing the problem as a single maximization problem, the second identity follows from the equality constraint, the third identity follows from the definition of the conjugate $g^{*}$, the fourth identity is due to relabeling $\bm{z}_{1} = \theta \cdot \bm{z}_1$ and $\bm{z}_{2} = \theta \cdot \bm{z}_2$, and the fifth identity is due to a variable change ($\bm{z}_1 = \bm{\omega} - \bm{z}_2$ relabeled as $\bm{z}$). 
\end{proof}
DC maximization terms similar to the one dealt by Lemma~\ref{lemma:main} appear on the left-hand side of the constraints of~\ref{synth} (\emph{cf.}~formulation in Proposition~\ref{prop:reformulate}). 
These constraints would admit a tractable reformulation for the case without auxiliary data because the $\inf$-term in the reformulation presented in Lemma~\ref{lemma:main} does not appear in such cases. 
To see this, eliminate the second norm (the one associated with auxiliary data) by taking $\widehat{\lambda} = 0$, which will cause the constraint $\lvert\theta\rvert \cdot \lVert \bm{z} \rVert_{q^\star} \leq \widehat{\lambda}$ to force $\bm{z} = \mathbf{0}$, and the alternative formulation will thus be:
\begin{align*}
    & \begin{cases}
        \underset{\theta \in \mathrm{dom}(L^*)}{\sup} \{ - L^{*}(\theta) + \theta \cdot \bm{\omega}^\top \bm{a}\} & \text{if } \sup_{\theta \in \mathrm{dom}(L^*)} \{ \lvert \theta \rvert\} \cdot \lVert \bm{z} \rVert_{q^\star} \leq \lambda \\
        +\infty & \text{otherwise}
    \end{cases} \\ 
    \; = \; 
    & \begin{cases}
        L(\bm{\omega}^\top \bm{a}) & \text{if } \mathrm{Lip}(L) \cdot \lVert \bm{z} \rVert_{q^\star} \leq \lambda \\
        +\infty & \text{otherwise},
    \end{cases}
\end{align*}
where we used the fact that $L = L^{**}$ and $\sup_{\theta \in \mathrm{dom}(L)} \lvert \theta \rvert = \mathrm{Lip}(L)$ since $L$ is closed convex~\citep[Corollary 13.3.3]{R97:convex_analysis}. 
Hence, the DC maximization can be represented with a convex function with an additional convex inequality, making the constraints tractable for the case without auxiliary data. 
For the case with auxiliary data, however, the $\sup_{\theta} \inf_{\bm{z}}$ structure makes these constraints equivalent to two-stage robust constraints (with uncertain parameter $\theta$ and adjustable variable $\bm{z}$), bringing an adjustable robust optimization \citep{ben2004adjustable,yanikouglu2019survey} perspective to~\ref{synth}.
By using the univariate representation $\ell_{\bm{\beta}}^{\alpha}(\bm{x}, y) = L^\alpha(y \cdot \bm{\beta}^\top \bm{x})$,~\ref{synth} can be written as
\begin{align*}
    \begin{array}{cll}
        \displaystyle \underset{\bm{\beta}, \lambda, \widehat{\lambda}, \bm{s}, \widehat{\bm{s}}}{\mathrm{minimize}}  & \displaystyle \varepsilon \lambda + \widehat{\varepsilon}  \widehat{\lambda} + \dfrac{1}{N}\sum_{j=1}^N s_j + \dfrac{1}{\widehat{N}}\sum_{i=1}^{\widehat{N}} \widehat{s}_i & \\[5mm]
        \mathrm{subject\;to} & \displaystyle \underset{\bm{x} \in \mathbb{R}^n}{\sup} \{ L^\alpha(\bm{\beta}^\top\bm{x}) - \lambda  \lVert \bm{x}^i - \bm{x} \rVert_{q} - \widehat{\lambda}  \lVert \widehat{\bm{x}}^j - \bm{x} \rVert_{q}\} \leq \\[3mm]
        & 
        {\quad s_i + \kappa  \dfrac{1 - y^i}{2} \lambda + \widehat{s}_j + \kappa  \dfrac{1 - \widehat{y}^j}{2}  \widehat{\lambda}} & \forall i \in [N], \ \forall j \in [\widehat{N}]  \\[5mm]
        & \displaystyle \underset{\bm{x} \in \mathbb{R}^n}{\sup}\{L^\alpha(-\bm{\beta}^\top\bm{x}) - \lambda  \lVert \bm{x}^i - \bm{x} \rVert_{q} - \widehat{\lambda}   \lVert \widehat{\bm{x}}^j - \bm{x} \rVert_{q}\} \leq \\[3mm]
        & 
        {\quad s_i + \kappa  \dfrac{1 + y^i}{2} \lambda + \widehat{s}_j + \kappa  \dfrac{1 + \widehat{y}^j}{2}  \widehat{\lambda}} & \forall i \in [N], \ \forall j \in [\widehat{N}] \\[5mm]
        & \bm{\beta} \in \mathbb{R}^n, \ \lambda \geq 0, \ \widehat{\lambda}\geq 0, \ \bm{s} \in \mathbb{R}^{N}_{+}, \ \widehat{\bm{s}} \in \mathbb{R}^{\widehat{N}}_{+},
    \end{array}
\end{align*}
and applying Lemma~\ref{lemma:main} to the left-hand side of the constraints gives:
\begin{align}\label{pre-ARO}
    \raisetag{2em}
    \begin{array}{cll}
        \displaystyle \underset{\bm{\beta}, \lambda, \widehat{\lambda}, \bm{s}, \widehat{\bm{s}}}{\mathrm{minimize}}  & \displaystyle \varepsilon \lambda + \widehat{\varepsilon}  \widehat{\lambda} + \dfrac{1}{N}\sum_{j=1}^N s_j + \dfrac{1}{\widehat{N}}\sum_{i=1}^{\widehat{N}} \widehat{s}_i & \\[5mm]
        \mathrm{subject\;to} & \displaystyle \underset{\theta \in \mathrm{dom}(L^*)}{\sup} \ - L^{\alpha *}(\theta) + \theta \cdot \bm{\beta}^\top \bm{x}^i + \theta \cdot \underset{\bm{z} \in \mathbb{R}^n}{\inf} \{ \bm{z}^\top (\widehat{\bm{x}}^j - \bm{x}^i) \ : \ \lvert\theta\rvert \cdot \lVert \bm{\beta} - \bm{z} \rVert_{q^\star} \leq \lambda, \ \lvert\theta\rvert \cdot \lVert \bm{z} \rVert_{q^\star} \leq \widehat{\lambda} \} \leq \\[3mm]
        & 
        {\quad s_i + \kappa  \dfrac{1 - y^i}{2} \lambda + \widehat{s}_j + \kappa  \dfrac{1 - \widehat{y}^j}{2}  \widehat{\lambda} \quad \forall i \in [N], \ \forall j \in [\widehat{N}]} &  \\[5mm]
        & \displaystyle \underset{\theta \in \mathrm{dom}(L^*)}{\sup} \ - L^{\alpha *}(\theta) - \theta \cdot \bm{\beta}^\top \bm{x}^i + \theta \cdot \underset{\bm{z} \in \mathbb{R}^n}{\inf} \{ \bm{z}^\top (\widehat{\bm{x}}^j - \bm{x}^i) \ : \ \lvert\theta\rvert \cdot \lVert -\bm{\beta} - \bm{z} \rVert_{q^\star} \leq \lambda, \ \lvert\theta\rvert \cdot \lVert \bm{z} \rVert_{q^\star} \leq \widehat{\lambda} \} \leq \\[3mm]
        & 
        {\quad s_i + \kappa  \dfrac{1 + y^i}{2} \lambda + \widehat{s}_j + \kappa  \dfrac{1 + \widehat{y}^j}{2}  \widehat{\lambda} \quad \forall i \in [N], \ \forall j \in [\widehat{N}]} &  \\[5mm]
        & \bm{\beta} \in \mathbb{R}^n, \ \lambda \geq 0, \ \widehat{\lambda}\geq 0, \ \bm{s} \in \mathbb{R}^{N}_{+}, \ \widehat{\bm{s}} \in \mathbb{R}^{\widehat{N}}_{+}.
    \end{array}
\end{align}
Which, equivalently, can be written as the following problem with $2 N\cdot \widehat{N}$ two-stage robust constraints: 
\begin{align}\label{ARO}\tag{Inter-adjustable}
\hspace{-0.5cm}
    \raisetag{1.5em}
    \begin{array}{cll}
        \displaystyle \underset{\bm{\beta}, \lambda, \widehat{\lambda}, \bm{s}, \widehat{\bm{s}}}{\mathrm{minimize}}  & \displaystyle \varepsilon \lambda + \widehat{\varepsilon}  \widehat{\lambda} + \dfrac{1}{N}\sum_{j=1}^N s_j + \dfrac{1}{\widehat{N}}\sum_{i=1}^{\widehat{N}} \widehat{s}_i & \\[5mm]
        \mathrm{subject\;to} & \displaystyle 
        \left[
            \begin{aligned}
                \forall \theta \in \mathrm{dom}(L^*), \ \exists \bm{z} \in \mathbb{R}^n \ : \begin{cases} \ - L^{\alpha *}(\theta) + \theta \cdot \bm{\beta}^\top \bm{x}^i + \theta \cdot \bm{z}^\top (\widehat{\bm{x}}^j - \bm{x}^i) \leq s_i + \kappa  \dfrac{1 - y^i}{2} \lambda + \widehat{s}_j + \kappa  \dfrac{1 - \widehat{y}^j}{2}  \widehat{\lambda} \\  \lvert\theta\rvert \cdot \lVert \bm{\beta} - \bm{z} \rVert_{q^\star} \leq \lambda \\ \lvert\theta\rvert \cdot \lVert \bm{z} \rVert_{q^\star} \leq \widehat{\lambda}
                \end{cases}
            \end{aligned}
            \right]\\[8.0mm]
        & 
        \hfill { \forall i \in [N], \ \forall j \in [\widehat{N}]} &  \\[5mm]
        & \displaystyle 
        \left[
            \begin{aligned}
                \forall \theta \in \mathrm{dom}(L^*), \ \exists \bm{z} \in \mathbb{R}^n \ : \begin{cases} \ - L^{\alpha *}(\theta) - \theta \cdot \bm{\beta}^\top \bm{x}^i + \theta \cdot \bm{z}^\top (\widehat{\bm{x}}^j - \bm{x}^i) \leq s_i + \kappa  \dfrac{1 + y^i}{2} \lambda + \widehat{s}_j + \kappa  \dfrac{1 + \widehat{y}^j}{2}  \widehat{\lambda} \\  
                    \lvert\theta\rvert \cdot \lVert -\bm{\beta} - \bm{z} \rVert_{q^\star} \leq \lambda \\ 
                    \lvert\theta\rvert \cdot \lVert \bm{z} \rVert_{q^\star} \leq \widehat{\lambda}
                \end{cases}
            \end{aligned}
            \right]\\[8.0mm]
        & 
        \hfill { \forall i \in [N], \ \forall j \in [\widehat{N}]} &  \\[5mm]
        & \bm{\beta} \in \mathbb{R}^n, \ \lambda \geq 0, \ \widehat{\lambda}\geq 0, \ \bm{s} \in \mathbb{R}^{N}_{+}, \ \widehat{\bm{s}} \in \mathbb{R}^{\widehat{N}}_{+}.
    \end{array}
\end{align}

By using adjustable robust optimization theory, we show that this problem is NP-hard even in the simplest setting. 
To this end, take $N = \widehat{N} = 1$ as well as $\kappa = 0$; the formulation presented in Proposition~\ref{prop:reformulate} reduces to:
\begin{align*}
        \begin{array}{cl}
        \underset{\substack{\bm{\beta}, \lambda, \widehat{\lambda},s, \widehat{s}}}{\mathrm{minimize}}     & \varepsilon \lambda + \widehat{\varepsilon} \widehat{\lambda} + s + \widehat{s} \\
        \mathrm{subject\;to}  & \displaystyle \underset{\bm{x} \in \mathbb{R}^n}{\sup} \{\ell^{\alpha}_{\bm{\beta}}(\bm{x}, l) - \lambda \lVert \bm{x}^1 - \bm{x} \rVert_q  - \widehat{\lambda} \lVert \widehat{\bm{x}}^1 - \bm{x} \rVert_q\} \leq s_1 + \widehat{s}_1 \quad \forall l \in \{-1, 1\}\\
        & \bm{\beta} \in \mathbb{R}^n, \; \lambda \geq 0, \; \widehat{\lambda} \geq 0, \; s \geq 0, \; \widehat{s} \geq 0.
        \end{array}
\end{align*}
The worst case realization of $l \in \{-1,1\}$ will always make $\ell_{\bm{\beta}}^\alpha(\bm{x},l) = \log(1 + \exp(-l\cdot \bm{\beta}^\top\bm{x} + \alpha\cdot \lVert \bm{\beta}\rVert_{p^\star}))$ equal to $\varsigma_{\bm{\beta}}^\alpha(\bm{x}) =  \log(1 + \exp(\lvert l\cdot \bm{\beta}^\top\bm{x}\rvert + \alpha\cdot \lVert \bm{\beta}\rVert_{p^\star}))$, where $\varsigma$ inherits similar properties from $\ell$: it is convex in $\bm{\beta}$ and its univariate representation $S^\alpha$ has the same Lipschitz constant with $L^\alpha$. \mbox{We can thus represent the above problem as}
\begin{align*}
    \begin{array}{cl}
    \underset{\substack{\bm{\beta}, \lambda, \widehat{\lambda},s, \widehat{s}}}{\mathrm{minimize}}     & \varepsilon \lambda + \widehat{\varepsilon} \widehat{\lambda} + s + \widehat{s} \\
    \mathrm{subject\;to}  & \displaystyle \underset{\bm{x} \in \mathbb{R}^n}{\sup} \{S^\alpha(\bm{\beta}^\top \bm{x}) - \lambda \lVert \bm{x}^1 - \bm{x} \rVert_q  - \widehat{\lambda} \lVert \widehat{\bm{x}}^1 - \bm{x} \rVert_q\} \leq s + \widehat{s}\\
    & \bm{\beta} \in \mathbb{R}^n, \; \lambda \geq 0, \; \widehat{\lambda} \geq 0, \; s \geq 0, \; \widehat{s} \geq 0.
    \end{array}
\end{align*}

Substituting $s + \widehat{s}$ into the objective (due to the objective pressure) \mbox{allows us to reformulate the above problem as}
\begin{align}\label{NP-hard}
    \begin{array}{cl}
    \underset{\substack{\bm{\beta}, \lambda, \widehat{\lambda}}}{\mathrm{minimize}}     & \varepsilon \lambda + \widehat{\varepsilon} \widehat{\lambda} + \underset{\bm{x} \in \mathbb{R}^n}{\sup} \{S^\alpha(\bm{\beta}^\top \bm{x}) - \lambda \lVert \bm{x}^1 - \bm{x} \rVert_q  - \widehat{\lambda} \lVert \widehat{\bm{x}}^1 - \bm{x} \rVert_q\} \\
    \mathrm{subject\;to} & \bm{\beta} \in \mathbb{R}^n, \; \lambda \geq 0, \; \widehat{\lambda} \geq 0,
    \end{array}
\end{align}
and an application of Lemma~\ref{lemma:main} leads us to the following reformulation:
\begin{align*}
    \underset{\substack{\bm{\beta} \in \mathbb{R}^n \\ \lambda \geq 0, \widehat{\lambda} \geq 0}}{\inf} \  \underset{\theta \in \mathrm{dom}(S^*)}{\sup} \ \inf_{\bm{z} \in \mathbb{R}^n} \left\{\varepsilon \lambda + \widehat{\varepsilon} \widehat{\lambda} - S^{\alpha*}(\theta) + \theta \cdot \bm{\beta}^\top \bm{x}^1 + \underbrace{\theta \cdot \bm{z}^\top (\widehat{\bm{x}}^1 - \bm{x}^1)}_{\text{(1)}} \ : \ \underbrace{\lvert\theta\rvert \cdot \lVert \bm{\beta} - \bm{z} \rVert_{q^\star} \leq \lambda}_{\text{(2)}}, \ \lvert\theta\rvert \cdot \lVert \bm{z} \rVert_{q^\star} \leq \widehat{\lambda} \right\}. \\
\end{align*}
The objective term (1) has a product of the uncertain parameter $\theta$ and the adjustable variable $\bm{z}$, and even when (2) is linear such as in the case of $q = 1$ the product of the uncertain parameter with both the decision variable $\bm{\beta}$ and the adjustable variable $\bm{z}$ still appear since:
\begin{align*}
    \lvert\theta\rvert \cdot \lVert \bm{\beta} - \bm{z} \rVert_{\infty} \leq \lambda \iff -\lambda \leq \theta \bm{\beta} - \theta\bm{z} \leq \lambda.
\end{align*}
This reduces problem~\eqref{NP-hard} to a generic two-stage robust optimization problem with random recourse~\citep[Problem 1]{subramanyam2020k} which is proven to be NP-hard even if $S^{\alpha*}$ was constant~\citep{Guslitser2002}. \qed

\subsection{Proof of Theorem~\ref{thm:main}}\label{appendix_proof_thm:main}
Consider the reformulation~\ref{ARO} of~\ref{synth} that we introduced in the proof of Proposition~\ref{prop:np-hard}. For any $i \in [N]$ and $j \in [\widehat{N}]$, the corresponding constraint in the first group of `adjustable robust' ($\forall, \ \exists$) constraints will be:
\begin{align*}
\hspace{-0.6cm}
    \forall \theta \in \mathrm{dom}(L^*), \exists \bm{z} \in \mathbb{R}^n : 
    \begin{cases} - L^{\alpha *}(\theta) + \theta \cdot \bm{\beta}^\top \bm{x}^i + \theta \cdot \bm{z}^\top (\widehat{\bm{x}}^j - \bm{x}^i) \leq s_i + \kappa  \dfrac{1 - y^i}{2} \lambda + \widehat{s}_j + \kappa  \dfrac{1 - \widehat{y}^j}{2}  \widehat{\lambda} \\  \lvert\theta\rvert \cdot \lVert \bm{\beta} - \bm{z} \rVert_{q^\star} \leq \lambda \\ \lvert\theta\rvert \cdot \lVert \bm{z} \rVert_{q^\star} \leq \widehat{\lambda}.
    \end{cases}
\end{align*}
By changing the order of $\forall$ and $\exists$, we obtain:
\begin{align*}
\hspace{-0.6cm}
    \exists \bm{z} \in \mathbb{R}^n, \forall \theta \in \mathrm{dom}(L^*) : 
    \begin{cases} - L^{\alpha *}(\theta) + \theta \cdot \bm{\beta}^\top \bm{x}^i + \theta \cdot \bm{z}^\top (\widehat{\bm{x}}^j - \bm{x}^i) \leq s_i + \kappa  \dfrac{1 - y^i}{2} \lambda + \widehat{s}_j + \kappa  \dfrac{1 - \widehat{y}^j}{2}  \widehat{\lambda} \\  \lvert\theta\rvert \cdot \lVert \bm{\beta} - \bm{z} \rVert_{q^\star} \leq \lambda \\ \lvert\theta\rvert \cdot \lVert \bm{z} \rVert_{q^\star} \leq \widehat{\lambda}.
    \end{cases}
\end{align*}
Notice that this is a safe approximation, since any fixed $\bm{z}$ satisfying the latter system is a feasible static solution in the former system, meaning that for every realization of $\theta$ in the first system, the inner $\exists \bm{z}$ can always `play' the same $\bm{z}$ that is feasible in the latter system (hence the latter is named the \textit{static} relaxation,~\citealt{bertsimas2015tight}). 
In the relaxed system, we can drop $\forall \theta$ and keep its worst-case realization instead:
\begin{align*}
\hspace{-0.5cm}
    \exists \bm{z} \in \mathbb{R}^n : 
    \begin{cases} \sup_{\theta \in \mathrm{dom}(L^*)}\{- L^{\alpha *}(\theta) + \theta \cdot \bm{\beta}^\top \bm{x}^i + \theta \cdot \bm{z}^\top (\widehat{\bm{x}}^j - \bm{x}^i)\} \leq s_i + \kappa  \dfrac{1 - y^i}{2} \lambda + \widehat{s}_j + \kappa  \dfrac{1 - \widehat{y}^j}{2}  \widehat{\lambda} \\  
        \sup_{\theta \in \mathrm{dom}(L^*)}\{\lvert\theta\rvert\} \cdot \lVert \bm{\beta} - \bm{z} \rVert_{q^\star} \leq \lambda \\
        \sup_{\theta \in \mathrm{dom}(L^*)}\{\lvert\theta\rvert\} \cdot \lVert \bm{z} \rVert_{q^\star} \leq \widehat{\lambda}.
    \end{cases}
\end{align*}
The term $\sup_{\theta \in \mathrm{dom}(L^*)}\{- L^{\alpha *}(\theta) + \theta \cdot \bm{\beta}^\top \bm{x}^i + \theta \cdot \bm{z}^\top (\widehat{\bm{x}}^j - \bm{x}^i)\}$ is the definition of the biconjugate $L^{\alpha**}(\bm{\beta}^\top\bm{x}^i + \bm{z}^\top (\widehat{\bm{x}}^j - \bm{x}^i))$. Since $L^{\alpha}$ is a closed convex function, we have $L^{\alpha**} = L^{\alpha}$~\citep[Corollary 12.2.1]{R97:convex_analysis}. 
Moreover, $\sup_{\theta \in \mathrm{dom}(L^*)}\{\lvert\theta\rvert\}$ is an alternative representation of the Lipschitz constant of the function $L^\alpha$~\citep[Corollary 13.3.3]{R97:convex_analysis}, which is equal to $1$ as we showed earlier. The adjustable robust constraint thus reduces to:
\begin{align*}
    \exists \bm{z} \in \mathbb{R}^n : 
    \begin{cases} L^{\alpha}(\bm{\beta}^\top\bm{x}^i + \bm{z}^\top (\widehat{\bm{x}}^j - \bm{x}^i)) \leq s_i + \kappa  \dfrac{1 - y^i}{2} \lambda + \widehat{s}_j + \kappa  \dfrac{1 - \widehat{y}^j}{2}  \widehat{\lambda} \\  
        \lVert \bm{\beta} - \bm{z} \rVert_{q^\star} \leq \lambda \\
        \lVert \bm{z} \rVert_{q^\star} \leq \widehat{\lambda}
    \end{cases}
\end{align*}
as a result of the static relaxation. This relaxed reformulation applies to all $i \in [N]$ and $j\in[\widehat{N}]$ as well as to the second group of adjustable robust constraints analogously. Replacing each constraint of~\ref{ARO} with this system concludes the proof.\qed
\subsection{Proof of Corollary~\ref{corr:relax}}\label{appendix_proof_corr:relax}
To prove the first statement, take $\widehat{\lambda} = 0$ and observe the constraint $\lVert\bm{z}^{l}_{ij}\rVert_{q^\star} \leq \widehat{\lambda}$ implies $\bm{z}^{l}_{ij} = \mathbf{0}$ for all $l \in \{-1, 1\}, \ i \in [N], \ j \in [\widehat{N}]$. The optimization problem can thus be written without those variables:
\begin{align*}
    \begin{array}{cll}
    \displaystyle \underset{\substack{\bm{\beta}, \lambda, \bm{s}, \widehat{\bm{s}}}}{\mathrm{minimize}}     & \displaystyle\varepsilon \lambda + \frac{1}{N} \sum_{i=1}^N s_i + \frac{1}{\widehat{N}} \sum_{j=1}^{\widehat{N}} \widehat{s}_j \\[1.3em]
    \mathrm{subject\;to}  & L^{\alpha}(l\bm{\beta}^\top \bm{x}^i) \leq s_i + \kappa\dfrac{1 - ly^i}{2} \lambda + \widehat{s}_j & \forall l \in \{-1, 1\}, \  \forall i \in [N],\ \forall j \in [\widehat{N}]\\[0.5em]
    & \lVert \bm{\beta} \rVert_{q^\star} \leq \lambda \\
    & \bm{\beta} \in \mathbb{R}^n, \; \lambda \geq 0, \; \bm{s} \in \mathbb{R}^N_{+}, \; \widehat{\bm{s}} \in \mathbb{R}^{\widehat{N}}_{+}.
    \end{array}
\end{align*}
Notice that optimal solutions should satisfy $\widehat{s}_j  = \widehat{s}_{j'}$ for all $j, j' \in [N]$. To see this, assume for contradiction that $\exists j, j' \in [N]$ such that $\widehat{s}_j < \widehat{s}_{j'}$. If a constraint indexed with $(l, i, j)$ for arbitrary $l \in \{-1,1 \}$ and $i \in [N]$ is feasible, it means the consraint indexed with $(l, i,j')$ cannot be tight given that these constraints are identical except for the $\widehat{s}_j$ or $\widehat{s}_{j'}$ appearing on the right hand side. Hence, such a solution cannot be optimal as this is a minimization problem, and updating $\widehat{s}_{j'}$ as $\widehat{s}_j$ preserves the feasibility of the problem while decreasing the objective value. We can thus use a single variable $\tau \in \mathbb{R}_+$ and rewrite the problem as
\begin{align*}
    \begin{array}{cll}
    \displaystyle \underset{\substack{\bm{\beta}, \lambda, \bm{s}, \widehat{\bm{s}}}}{\mathrm{minimize}}     & \displaystyle\varepsilon \lambda + \frac{1}{N} \sum_{i=1}^N (s_i + \tau) \\[1.3em]
    \mathrm{subject\;to}  & L^{\alpha}(\bm{\beta}^\top \bm{x}^i) \leq s_i + \kappa\dfrac{1 - y^i}{2} \lambda + \tau & \forall i \in [N]\\[0.5em]
    & L^{\alpha}(-\bm{\beta}^\top \bm{x}^i) \leq s_i + \kappa\dfrac{1 + y^i}{2} \lambda + \tau & \forall i \in [N]\\[0.5em]
    & \lVert \bm{\beta} \rVert_{q^\star} \leq \lambda \\
    & \bm{\beta} \in \mathbb{R}^n, \; \lambda \geq 0, \; \bm{s} \in \mathbb{R}^N_{+}, \; \widehat{\bm{s}} \in \mathbb{R}^{\widehat{N}}_{+},
    \end{array}
\end{align*}
where we also eliminated the index $l \in \{-1, 1\}$ by writing the constraints explicitly. 
Since $s_i$ and $\tau$ both appear as $s_i + \tau$ in this problem, we can use a variable change where we relabel $s_i + \tau$ as $s_i$ (or, equivalently set $\tau = 0$ without any optimality loss).
Moreover, the constraints with index $i \in [N]$ are
\begin{align*}
    \begin{cases}
        L^{\alpha}(\bm{\beta}^\top \bm{x}^i) \leq s_i + \tau\\
        L^{\alpha}(-\bm{\beta}^\top \bm{x}^i) \leq s_i + \kappa \lambda + \tau
    \end{cases} \; = \; 
    \begin{cases}
        L^{\alpha}(y^i\cdot\bm{\beta}^\top \bm{x}^i) \leq s_i + \tau\\
        L^{\alpha}(-y^i\cdot\bm{\beta}^\top \bm{x}^i) \leq s_i + \kappa \lambda + \tau
    \end{cases}
\end{align*}
if $y^i = 1$, and similarly they are
\begin{align*}
    \begin{cases}
        L^{\alpha}(\bm{\beta}^\top \bm{x}^i) \leq s_i + \kappa \lambda + \tau\\
        L^{\alpha}(-\bm{\beta}^\top \bm{x}^i) \leq s_i +  \tau
    \end{cases} \; = \;
    \begin{cases}
        L^{\alpha}(-y^i \cdot \bm{\beta}^\top \bm{x}^i) \leq s_i + \kappa \lambda + \tau\\
        L^{\alpha}(y^i \cdot \bm{\beta}^\top \bm{x}^i) \leq s_i +  \tau
    \end{cases}
\end{align*}
if $y^i = -1$. Since these are identical, the problem can finally be written as
\begin{align*}
    \begin{array}{cll}
    \displaystyle \underset{\substack{\bm{\beta}, \lambda, \bm{s}}}{\mathrm{minimize}}     & \displaystyle\varepsilon \lambda + \frac{1}{N} \sum_{i=1}^N s_i \\[1.3em]
    \mathrm{subject\;to}  & \log(1 + \exp(-y^i \cdot \bm{\beta}^\top \bm{x}^i + \alpha \cdot \lVert \bm{\beta} \rVert_{p^\star} )) \leq s_i & \forall i \in [N]\\[0.5em] 
    & \log(1 + \exp(y^i \cdot \bm{\beta}^\top \bm{x}^i + \alpha \cdot \lVert \bm{\beta} \rVert_{p^\star} ))  - \lambda\kappa \leq s_i  & \forall i \in [N] \\[0.5em]
    & \lVert \bm{\beta} \rVert_{q^\star} \leq \lambda \\
    & \bm{\beta} \in \mathbb{R}^n, \; \lambda \geq 0, \; \bm{s} \in \mathbb{R}^N_{+},
    \end{array}
\end{align*}
where we also used the definition of $L^\alpha$. This problem is identical to~\ref{advdro}, which means that feasible solutions of~\ref{advdro} are feasible for~\ref{safe} if the additional variables $(\widehat{\lambda}, \widehat{\bm{s}}, \bm{z}^l_{ij})$ are set to zero, concluding the first statement of the corollary. 

The second statement is immediate since $\widehat{\varepsilon} \rightarrow \infty$ forces $\widehat\lambda = 0$ due to the term $\widehat{\varepsilon}\widehat{\lambda}$ in the objective of~\ref{safe}, and this proof shows in such a case~\ref{safe} reduces to~\ref{advdro} (which is identical to~\ref{synth} when $\varepsilon \rightarrow \infty$ by definition). \qed

\subsection{Proof of Observation~\ref{obs:literature}}\label{proof_mixture_representation}
By standard linearity arguments and from the definition of $\mathbb{Q}_{\mathrm{mix}}$, we have
\begin{align*}
    & \displaystyle \mathbb{E}_{\mathbb{Q}_{\mathrm{mix}}} \Bigg[\underset{\bm{z} \in \mathcal{B}_p(\alpha)}{\sup} \{\ell_{\bm{\beta}}(\bm{x} + \bm{z}, y)\} \Bigg] \\
\iff & \int_{(\bm{x}, y)\in \mathbb{R}^n \times \{-1, +1\}}  \  \underset{\bm{z} \in \mathcal{B}_p(\alpha)}{\sup} \{\ell_{\bm{\beta}}(\bm{x} + \bm{z}, y)\} \diff \mathbb{Q}_{\mathrm{mix}}((\bm{x}, y)) \\   
 \iff & \frac{N}{N + w\widehat{N}}  \int_{(\bm{x}, y)\in \mathbb{R}^n \times \{-1, +1\}}  \  \underset{\bm{z} \in \mathcal{B}_p(\alpha)}{\sup} \{\ell_{\bm{\beta}}(\bm{x} + \bm{z}, y)\} \diff \mathbb{P}_N((\bm{x}, y)) + \\
 & \qquad  \frac{w\widehat{N}}{N + w\widehat{N}}  \int_{(\bm{x}, y)\in \mathbb{R}^n \times \{-1, +1\}}  \  \underset{\bm{z} \in \mathcal{B}_p(\alpha)}{\sup} \{\ell_{\bm{\beta}}(\bm{x} + \bm{z}, y)\} \diff \widehat{\mathbb{P}}_{\widehat{N}}((\bm{x}, y))  \\
 \iff & \frac{N}{N + w\widehat{N}}  \cdot  \displaystyle \dfrac{1}{N} \sum_{i \in [N]}  \underset{\bm{z}^i \in \mathcal{B}_p(\alpha)}{\sup} \{\ell_{\bm{\beta}}(\bm{x}^i + \bm{z}^i, y^i)\}  +
 \frac{w\widehat{N}}{N + w\widehat{N}} \cdot \displaystyle \dfrac{1}{\widehat{N}} \sum_{j \in [\widehat{N}]}  \underset{\bm{z}^j \in \mathcal{B}_p(\alpha)}{\sup} \{\ell_{\bm{\beta}}(\widehat{\bm{x}}^j  + \bm{z}^j, \widehat{y}^j)\} \\
 \iff & \displaystyle  \dfrac{1}{N + w\widehat{N}} \left[ \sum_{i \in [N]}  \underset{\bm{z}^i \in \mathcal{B}_p(\alpha)}{\sup} \{\ell_{\bm{\beta}}(\bm{x}^i + \bm{z}^i, y^i)\}  +  \displaystyle w \cdot \sum_{j \in [\widehat N]}   \underset{\bm{z}^j \in \mathcal{B}_p(\alpha)}{\sup} \{\ell_{\bm{\beta}}(\widehat{\bm{x}}^j  + \bm{z}^j, \widehat{y}^j)\}   \right],
\end{align*}
which coincides with the objective function of~\eqref{synth_literature}. Since we have
$$\displaystyle \mathbb{E}_{\mathbb{Q}_{\mathrm{mix}}} \Bigg[\underset{\bm{z} \in \mathcal{B}_p(\alpha)}{\sup} \{\ell_{\bm{\beta}}(\bm{x} + \bm{z}, y)\} \Bigg] = \displaystyle \mathbb{E}_{\mathbb{Q}_{\mathrm{mix}}} [\ell^\alpha_{\bm{\beta}}(\bm{x}, y)]$$
we can conclude the proof. \qed

\subsection{Proof of Proposition~\ref{prop:included}}\label{proof_prop_intersection}
We first prove auxiliary results on mixture distributions. To this end, denote by $\mathcal{C}(\mathbb{Q}, \mathbb{P}) \subseteq \mathcal{P}(\Xi \times \Xi)$ the set of couplings of the distributions $\mathbb{Q} \in \mathcal{P}(\Xi)$ and $\mathbb{P} \in \mathcal{P}(\Xi)$. 
\begin{lemma}\label{lem_mixture}
Let $\mathbb{Q}, \mathbb{P}^1, \mathbb{P}^2 \in \mathcal{P}(\Xi)$ be probability distributions. If $\Pi^1 \in \mathcal{C}(\mathbb{Q}, \mathbb{P}^1)$ and $\Pi^2 \in \mathcal{C}(\mathbb{Q}, \mathbb{P}^2)$, then, $\lambda \cdot \Pi^1 + (1 - \lambda) \cdot \Pi^2 \in \mathcal{C}(\mathbb{Q}, \lambda \cdot \mathbb{P}^1 + (1-\lambda) \cdot \mathbb{P}^2)$ for all $\lambda \in (0,1)$.
\end{lemma}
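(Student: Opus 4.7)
The plan is to verify directly from the definition of a coupling that $\Pi := \lambda \cdot \Pi^1 + (1-\lambda) \cdot \Pi^2$ is a valid probability measure on $\Xi \times \Xi$ whose two marginals coincide with $\mathbb{Q}$ and $\lambda \cdot \mathbb{P}^1 + (1-\lambda) \cdot \mathbb{P}^2$ respectively. Since $\mathcal{P}(\Xi \times \Xi)$ is convex, $\Pi$ is itself a probability measure, so the only content of the lemma is a marginal calculation.

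First, I would fix an arbitrary measurable set $A \subseteq \Xi$ and compute the first marginal of $\Pi$ by evaluating $\Pi(A \times \Xi)$. By linearity of the measure in the mixture, $\Pi(A \times \Xi) = \lambda \cdot \Pi^1(A \times \Xi) + (1-\lambda) \cdot \Pi^2(A \times \Xi)$. Because $\Pi^k \in \mathcal{C}(\mathbb{Q}, \mathbb{P}^k)$ for $k \in \{1,2\}$, both $\Pi^1(A \times \Xi)$ and $\Pi^2(A \times \Xi)$ equal $\mathbb{Q}(A)$, so the convex combination collapses to $\mathbb{Q}(A)$, establishing that the first marginal of $\Pi$ is $\mathbb{Q}$.

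Next, for an arbitrary measurable $B \subseteq \Xi$, I would compute $\Pi(\Xi \times B) = \lambda \cdot \Pi^1(\Xi \times B) + (1-\lambda) \cdot \Pi^2(\Xi \times B) = \lambda \cdot \mathbb{P}^1(B) + (1-\lambda) \cdot \mathbb{P}^2(B)$, which by definition is exactly the measure $(\lambda \cdot \mathbb{P}^1 + (1-\lambda) \cdot \mathbb{P}^2)(B)$. Combining the two marginal computations yields $\Pi \in \mathcal{C}(\mathbb{Q}, \lambda \cdot \mathbb{P}^1 + (1-\lambda) \cdot \mathbb{P}^2)$, as required.

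There is no real obstacle here; the argument is one of linearity of integration applied to the marginal projections, and the only mild point to be careful about is that $\lambda \in (0,1)$ ensures the coefficients are nonnegative and sum to one so that $\Pi$ is genuinely a probability measure (the boundary cases $\lambda \in \{0,1\}$ trivially reduce to $\Pi^2$ or $\Pi^1$). This lemma will then be used in the proof of Proposition~\ref{prop:included} to exhibit an explicit coupling between $\mathbb{Q}_{\mathrm{mix}}$ and each of $\mathbb{P}_N$ and $\widehat{\mathbb{P}}_{\widehat{N}}$ whose transport cost can be bounded using the triangle-inequality-type assumption $\varepsilon + \widehat{\varepsilon} \geq \mathrm{W}(\mathbb{P}_N, \widehat{\mathbb{P}}_{\widehat{N}})$.
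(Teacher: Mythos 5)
Your proof is correct and follows essentially the same route as the paper: both verify the two marginal conditions of a coupling by exploiting linearity of the mixture and then invoking the marginal constraints of $\Pi^1$ and $\Pi^2$. The only difference is cosmetic — you work with measurable sets $A,B$ rather than the paper's differential notation $\Pi(\diff\bm{\xi},\Xi)$ — and your added observation that $\lambda\in(0,1)$ keeps $\Pi$ a genuine probability measure is a sensible but minor point the paper leaves implicit.
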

\begin{proof}
Let $\Pi = \lambda \cdot \Pi^1 + (1 - \lambda) \cdot \Pi^2$ and $\mathbb{P} = \lambda \cdot \mathbb{P}^1 + (1-\lambda) \cdot \mathbb{P}^2$. To have $\Pi \in \mathcal{C}(\mathbb{Q}, \mathbb{P})$ we need $\Pi(\diff \bm{\xi}, \Xi) = \mathbb{Q}(\diff \bm{\xi})$ and $\Pi(\Xi, \diff \bm{\xi'}) = \mathbb{P}(\diff \bm{\xi'})$. To this end, observe that
\begin{align*}
    \Pi(\diff \bm{\xi}, \Xi) &= \lambda \cdot \Pi^1(\diff \bm{\xi}, \Xi) + (1 - \lambda) \cdot \Pi^2(\diff \bm{\xi}, \Xi)\\
    & = \lambda \cdot \mathbb{Q} + (1- \lambda)\cdot \mathbb{Q} \; = \;  \mathbb{Q}
\end{align*}
where the second identity uses the fact that $\Pi^1 \in \mathcal{C}(\mathbb{Q}, \mathbb{P}^1)$. Similarly, we can show:
\begin{align*}
    \Pi(\Xi, \diff \bm{\xi}) &= \lambda \cdot \Pi^1(\Xi, \diff \bm{\xi}) + (1 - \lambda) \cdot \Pi^2(\Xi, \diff \bm{\xi})\\
    & = \lambda \cdot \mathbb{P}^1 + (1- \lambda)\cdot \mathbb{P}^2 \; = \; \mathbb{P},
\end{align*}
which concludes the proof.
\end{proof}
We further prove the following intermediary result.
\begin{lemma}\label{lem_coupling}
    Let $\mathbb{Q}, \mathbb{P}^1, \mathbb{P}^2 \in \mathcal{P}(\Xi)$ and $\mathbb{P} = \lambda \cdot \mathbb{P}^1 + (1- \lambda) \cdot \mathbb{P}^2 $ for some $\lambda \in (0,1)$. We have:
    \begin{align*}
        \mathrm{W}(\mathbb{Q}, \mathbb{P}) \leq  \lambda \cdot \mathrm{W}(\mathbb{Q}, \mathbb{P}^1) + (1- \lambda) \cdot \mathrm{W}(\mathbb{Q}, \mathbb{P}^2).
    \end{align*}
\end{lemma}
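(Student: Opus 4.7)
The plan is to bound $\mathrm{W}(\mathbb{Q}, \mathbb{P})$ from above by exhibiting an explicit coupling of $\mathbb{Q}$ and $\mathbb{P}$ whose transport cost realizes the desired right-hand side, and then invoke the infimum in Definition~\ref{def:Wasser}. The key observation is that Lemma~\ref{lem_mixture} already provides exactly the right construction: convex combinations of couplings are couplings of the corresponding convex combinations of marginals (with $\mathbb{Q}$ playing the role of the shared marginal).

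First, I would fix an arbitrary $\delta > 0$ and choose $\Pi^1 \in \mathcal{C}(\mathbb{Q}, \mathbb{P}^1)$ and $\Pi^2 \in \mathcal{C}(\mathbb{Q}, \mathbb{P}^2)$ that are $\delta$-optimal for their respective Wasserstein problems, meaning
\begin{align*}
\int_{\Xi \times \Xi} d(\bm{\xi}, \bm{\xi'}) \, \Pi^k(\diff \bm{\xi}, \diff \bm{\xi'}) \leq \mathrm{W}(\mathbb{Q}, \mathbb{P}^k) + \delta \qquad \text{for } k \in \{1,2\}.
\end{align*}
Passing through $\delta$-optimal couplings rather than optima avoids any issue with whether the infimum in Definition~\ref{def:Wasser} is attained on $\Xi = \mathbb{R}^n \times \{-1,+1\}$.

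Next, I would form the mixture coupling $\Pi = \lambda \cdot \Pi^1 + (1-\lambda) \cdot \Pi^2$. By Lemma~\ref{lem_mixture}, $\Pi \in \mathcal{C}(\mathbb{Q}, \mathbb{P})$, so $\Pi$ is feasible in the Wasserstein infimum defining $\mathrm{W}(\mathbb{Q}, \mathbb{P})$. By linearity of integration with respect to the measure argument, its transport cost decomposes as
\begin{align*}
\int_{\Xi \times \Xi} d(\bm{\xi}, \bm{\xi'}) \, \Pi(\diff \bm{\xi}, \diff \bm{\xi'})
= \lambda \int_{\Xi \times \Xi} d \, \diff \Pi^1 + (1-\lambda) \int_{\Xi \times \Xi} d \, \diff \Pi^2
\leq \lambda \, \mathrm{W}(\mathbb{Q}, \mathbb{P}^1) + (1-\lambda) \, \mathrm{W}(\mathbb{Q}, \mathbb{P}^2) + \delta.
\end{align*}
Taking the infimum over couplings on the left yields $\mathrm{W}(\mathbb{Q}, \mathbb{P}) \leq \lambda \, \mathrm{W}(\mathbb{Q}, \mathbb{P}^1) + (1-\lambda) \, \mathrm{W}(\mathbb{Q}, \mathbb{P}^2) + \delta$, and sending $\delta \downarrow 0$ gives the claim.

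The only potential obstacle is a purely measure-theoretic one, namely ensuring that the convex combination of the two couplings is well-defined as an element of $\mathcal{P}(\Xi \times \Xi)$ and that the integral splits linearly; both are immediate since $\Pi^1, \Pi^2$ are probability measures on the same space and $d$ is nonnegative and measurable, so Tonelli/linearity applies with no integrability subtleties. Lemma~\ref{lem_mixture} handles the marginal bookkeeping, so the proof is essentially a one-line application of that lemma combined with the infimum definition.
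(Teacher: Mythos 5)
Your proof is correct and takes essentially the same approach as the paper: form the mixture coupling $\lambda\Pi^1 + (1-\lambda)\Pi^2$, invoke Lemma~\ref{lem_mixture} for feasibility, and exploit linearity of the transport cost. The only difference is that you use $\delta$-optimal couplings and let $\delta \downarrow 0$, whereas the paper takes exact optimal couplings and justifies their existence by citing Villani's Theorem 4.1; both resolve the attainment question, just by different means.
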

\begin{proof}
    The Wasserstein distance between $\mathbb{Q}, \mathbb{Q}' \in \mathcal{P}(\Xi)$ can be written as:
    \begin{align*}
        \mathrm{W}(\mathbb{Q}, \mathbb{Q}') = \underset{\Pi \in \mathcal{C}(\mathbb{Q}, \mathbb{Q}')}{\min} \left\{ \int_{\Xi \times \Xi} d(\bm{\xi}, \bm{\xi'}) \Pi(\diff \bm{\xi}, \diff \bm{\xi'}) \right\},
    \end{align*}
    and since $d$ is a feature-label metric (\textit{cf.} Definition~\ref{def:feature-label}) the minimum is well-defined~\citep[Theorem 4.1]{villani2009optimal}. We name the optimal solutions to the above problem the \textit{optimal couplings}. Let $\Pi^1$ be an optimal coupling of $\mathrm{W}(\mathbb{Q}, \mathbb{P}^1)$ and let $\Pi^2$ be an optimal coupling of $\mathrm{W}(\mathbb{Q}, \mathbb{P}^2)$ and define $\Pi^c = \lambda \cdot \Pi^1 + (1 - \lambda) \cdot \Pi^2$. We have
    \begin{align*}
        \mathrm{W}(\mathbb{Q}, \mathbb{P}) &= \underset{\Pi \in \mathcal{C}(\mathbb{Q},  \mathbb{P})}{\min} \left\{ \int_{\Xi \times \Xi} d(\bm{\xi}, \bm{\xi'}) \Pi(\diff \bm{\xi}, \diff \bm{\xi'}) \right\} \\
        & \leq  \int_{\Xi \times \Xi} d(\bm{\xi}, \bm{\xi'}) \Pi^c(\diff \bm{\xi}, \diff \bm{\xi'})\\
        & = \lambda \cdot \int_{\Xi \times \Xi} d(\bm{\xi}, \bm{\xi'}) \Pi^1(\diff \bm{\xi}, \diff \bm{\xi'}) + (1-\lambda) \cdot \int_{\Xi \times \Xi} d(\bm{\xi}, \bm{\xi'}) \Pi^2(\diff \bm{\xi}, \diff \bm{\xi'})  \\
        & = \lambda \cdot \mathrm{W}(\mathbb{Q}, \mathbb{P}^1) + (1- \lambda) \cdot \mathrm{W}(\mathbb{Q}, \mathbb{P}^2),
    \end{align*}
    where the first identity uses the definition of the Wasserstein metric, the inequality is due to Lemma~\ref{lem_mixture} as $\Pi^c$ is a feasible coupling (not necessarily optimal), the equality that follows uses the definition of $\Pi^c$ and the linearity of integrals, and the final identity uses the fact that $\Pi^1$ and $\Pi^2$ were constructed to be the optimal couplings.
\end{proof}
We now prove the proposition (we refer to $\mathbb{Q}_{\mathrm{mix}}$ in the statement of this lemma simply as $\mathbb{Q}$). To prove $\mathbb{Q} \in \mathfrak{B}_{\varepsilon}({\mathbb{P}}_N) \cap \mathfrak{B}_{\widehat \varepsilon}({\widehat{\mathbb{P}}}_{\widehat{N}})$, it is sufficient to show that $\mathrm{W}(\mathbb{P}_N, \mathbb{Q}) \leq \varepsilon$ and $\mathrm{W}(\widehat{\mathbb{P}}_{\widehat{N}}, \mathbb{Q}) \leq \widehat{\varepsilon}$ jointly hold. By using Lemma~\ref{lem_coupling}, we can derive the following inequalities:
\begin{align*}
    & \mathrm{W}(\mathbb{P}_N, \mathbb{Q})  \leq \lambda \cdot \underbrace{\mathrm{W}(\mathbb{P}_N, \mathbb{P}_N)}_{=0} + (1- \lambda) \cdot  \mathrm{W}(\mathbb{P}_N, \widehat{\mathbb{P}}_{\widehat{N}}) \\
    & \mathrm{W}(\widehat{\mathbb{P}}_{\widehat{N}}, \mathbb{Q})  \leq \lambda \cdot \mathrm{W}(\mathbb{P}_N, \widehat{\mathbb{P}}_{\widehat{N}}) + (1-\lambda) \cdot \underbrace{\mathrm{W}(\widehat{\mathbb{P}}_{\widehat{N}},\widehat{\mathbb{P}}_{\widehat{N}})}_{=0}.
\end{align*}
Therefore, sufficient conditions on $\mathrm{W}(\mathbb{P}_N, \mathbb{Q}) \leq \varepsilon$ and $\mathrm{W}(\widehat{\mathbb{P}}_{\widehat{N}}, \mathbb{Q}) \leq \widehat{\varepsilon}$ would be:
\begin{align*}
\begin{cases}
    (1- \lambda) \cdot  \mathrm{W}(\mathbb{P}_N, \widehat{\mathbb{P}}_{\widehat{N}}) \leq \varepsilon \\
    \lambda \cdot \mathrm{W}(\mathbb{P}_N, \widehat{\mathbb{P}}_{\widehat{N}}) \leq \widehat{\varepsilon}.
\end{cases}
\end{align*}
Moreover, given that $\varepsilon + \widehat{\varepsilon} \geq \mathrm{W}(\mathbb{P}_N, \widehat{\mathbb{P}}_{\widehat{N}})$, the sufficient conditions further simplify to
\begin{align*}
\begin{cases}
    (1- \lambda) \cdot  \widehat{\varepsilon} \leq \lambda \cdot \varepsilon \\
    \lambda \cdot \varepsilon \leq (1- \lambda)\cdot \widehat{\varepsilon}.
\end{cases} \iff \lambda \cdot \varepsilon = (1- \lambda)\cdot \widehat{\varepsilon},
\end{align*}
which is implied when $\dfrac{\lambda}{1 - \lambda} = \dfrac{\widehat{\varepsilon}}{\varepsilon}$, concluding the proof. \qed

\subsection{Proof of Theorem~\ref{thm:stats1}}\label{Sec:thmstats1proof}
Since each result in the statement of this theorem is abridged, we will present these results sequentially as separate results.
We review the existing literature to characterize $\mathfrak{B}_{\varepsilon}({\mathbb{P}}_{N})$, in a similar fashion with the results presented in~\citep[Appendix A]{selvi2022wasserstein} for the logistic loss, by revising them to the adversarial loss whenever necessary. The $N$-fold product distribution of $\mathbb{P}^0$ from which the training set $\mathbb{P}_N$ is constructed is denoted below by $[\mathbb{P}^0]^{N}$. 
\begin{theorem}\label{thm_includes}
    Assume there exist $a > 1$ and $A > 0$ such that $\mathbb{E}_{\mathbb{P}^0}[\exp(\lVert \bm{\xi} \rVert^a)] \leq A$ for a norm $\lVert \cdot \rVert$ on $\mathbb{R}^n$. Then, there are constants $c_1, c_2 > 0$ that only depend on $\mathbb{P}^0$ through $a$, $A$, and $n$, such that $[\mathbb{P}^0]^{N}( \mathbb{P}^0 \in \mathfrak{B}_{\varepsilon}({\mathbb{P}}_{N})) \geq 1 - \eta$ holds for any confidence level $\eta \in (0,1)$ as long as the Wasserstein ball radius satisfies the following optimal characterization
    \begin{align*}
        \varepsilon \geq 
        \begin{cases}
            \left(\dfrac{\log(c_1 / \eta)}{c_2 \cdot N} \right)^{1 / \max\{n, 2\}} & \text{if } N \geq \dfrac{\log(c_1 / \eta)}{c_2} \\
            \left(\dfrac{\log(c_1 / \eta)}{c_2 \cdot N} \right)^{1 / a} &\text{otherwise.}
        \end{cases}
    \end{align*}
\end{theorem}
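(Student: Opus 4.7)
The plan is to derive Theorem~\ref{thm_includes} as a direct consequence of the Fournier--Guillin concentration inequality for empirical measures in Wasserstein distance, combined with an explicit inversion that expresses the radius $\varepsilon$ in terms of the desired confidence level $\eta$. The hypothesis $\mathbb{E}_{\mathbb{P}^0}[\exp(\lVert\bm{\xi}\rVert^a)] \leq A$ is precisely the exponential-moment condition required by that theorem. Since the label component of $\bm{\xi} \in \Xi = \mathbb{R}^n \times \{-1,+1\}$ lies in a bounded set and contributes at most $\kappa$ to the feature-label metric $d$ of Definition~\ref{def:feature-label}, this hypothesis transfers (up to a harmless adjustment of $A$) to $\Xi$ equipped with $d$. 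The effective dimension that drives the small-$\varepsilon$ rate remains $n$, because the two label atoms do not increase the metric-entropy complexity of the space.

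I would then invoke the Fournier--Guillin tail bound, which, under the light-tail assumption, provides constants $c_1, c_2 > 0$ depending only on $a$, $A$, and $n$ such that
\begin{align*}
    [\mathbb{P}^0]^N\bigl( \mathrm{W}(\mathbb{P}^0, \mathbb{P}_N) \geq \varepsilon \bigr) \leq c_1 \exp\bigl(-c_2 N \varepsilon^{\max\{n,2\}}\bigr)
\end{align*}
whenever $\varepsilon \leq 1$, and
\begin{align*}
    [\mathbb{P}^0]^N\bigl( \mathrm{W}(\mathbb{P}^0, \mathbb{P}_N) \geq \varepsilon \bigr) \leq c_1 \exp\bigl(-c_2 N \varepsilon^{a}\bigr)
\end{align*}
whenever $\varepsilon > 1$. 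The two regimes reflect the dimension-dependent rate for small deviations and the tail-dominated rate for large ones. Setting each right-hand side equal to $\eta$ and solving for $\varepsilon$ immediately yields the two expressions in the statement. The threshold $N \geq \log(c_1/\eta) / c_2$ is exactly the condition under which the small-$\varepsilon$ solution satisfies $\varepsilon \leq 1$, so the case split in the claim matches the natural split in the concentration bound. Taking the contrapositive of each solved inequality produces $[\mathbb{P}^0]^N\bigl(\mathbb{P}^0 \in \mathfrak{B}_{\varepsilon}(\mathbb{P}_N)\bigr) \geq 1 - \eta$.

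The main obstacle is not any single computation — both the concentration bound and its algebraic inversion are mechanical — but rather the careful bookkeeping of constants when transferring the moment assumption from the feature norm $\lVert\cdot\rVert$ on $\mathbb{R}^n$ to the product metric $d$ on $\Xi$, and the verification that the resulting constants $c_1, c_2$ indeed depend on $\mathbb{P}^0$ only through $a$, $A$, and $n$ (rather than through finer features of $\mathbb{P}^0$). The optimality of the exponents $1/\max\{n,2\}$ and $1/a$ in $\varepsilon$, which the statement claims, follows from the matching lower bounds established alongside the Fournier--Guillin upper bound; these show that the rates cannot be improved (in $N$) by more than a universal constant factor across the class of distributions satisfying the assumed moment condition.
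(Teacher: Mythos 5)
Your proposal reproduces exactly the Fournier--Guillin concentration argument that the paper invokes by citing Theorem 18 of \cite{kuhn2019wasserstein} (itself a packaging of the Fournier--Guillin tail bound), including the two-regime tail estimate, the inversion that solves for $\varepsilon$ at confidence $\eta$, the threshold $N \geq \log(c_1/\eta)/c_2$ as the boundary between regimes, and the attribution of rate optimality to \cite{fournier2015rate}. The only content you add beyond the paper's one-line proof is the explicit remark about transferring the exponential-moment condition from $\lVert\cdot\rVert$ on $\mathbb{R}^n$ to the feature-label metric $d$ on $\Xi$ --- a reasonable point of care that the paper leaves implicit, but not a different route.
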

\begin{proof}
    The statement follows from Theorem 18 of~\cite{kuhn2019wasserstein}. The presented decay rate $\mathcal{O}(N^{-1/n})$ of $\varepsilon$ as $N$ increases is optimal~\citep{fournier2015rate}. 
\end{proof}
Now that we gave a confidence for the unknown radius $\varepsilon$ satisfying $\mathbb{P}^0 \in \mathfrak{B}_{\varepsilon}({\mathbb{P}}_{N})$, we analyze the underlying optimization problems. 
Most of the theory is well-established for logistic loss function, and in the following we show that similar results follow for the adversarial loss function. For convenience, we state~\ref{advdro} again by using the adversarial loss function as defined in Observation~\ref{obs:prelim}:
\begin{align}\label{advdro_alternative}\tag{DR-ARO}
    \begin{array}{cl}
        \displaystyle \underset{\bm{\beta}}{\mathrm{minimize}} & \displaystyle \underset{\mathbb{Q} \in \mathfrak{B}_{\varepsilon}({\mathbb{P}}_N)}{\sup} \quad  \mathbb{E}_{\mathbb{Q}} [\ell^{\alpha}_{\bm{\beta}}(\bm{x}, y)] \\
        \mathrm{subject\;to} & \bm{\beta} \in \mathbb{R}^n.
    \end{array}
\end{align}
\begin{theorem}\label{theorem_over}
    If the assumptions of Theorem~\ref{thm_includes} are satisfied and $\varepsilon$ is chosen as in the statement of Theorem~\ref{thm_includes}, then
    \begin{align*}
        [\mathbb{P}^0]^N \left( \mathbb{E}_{\mathbb{P}^0} [\ell^{\alpha}_{\bm{\beta}^\star}(\bm{x}, y)] \leq \underset{\mathbb{Q} \in \mathfrak{B}_{\varepsilon}({\mathbb{P}}_N)}{\sup} \mathbb{E}_{\mathbb{Q}} [\ell^{\alpha}_{\bm{\beta}^\star}(\bm{x}, y)] \right) \geq 1- \eta
    \end{align*}
    holds for all $\eta \in (0,1)$ and all optimizers $\bm{\beta}^\star$ of~\ref{advdro_alternative}.
\end{theorem}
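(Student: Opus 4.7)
The plan is to leverage Theorem~\ref{thm_includes} directly: it guarantees that the chosen $\varepsilon$ is large enough so that $\mathbb{P}^0 \in \mathfrak{B}_{\varepsilon}(\mathbb{P}_N)$ with probability at least $1-\eta$ under the product measure $[\mathbb{P}^0]^N$. The rest of the argument is essentially a feasibility argument for the inner supremum problem, and does not require any structural property of the adversarial loss beyond measurability.

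First, I would fix the high-probability event $\mathcal{E} := \{\mathbb{P}^0 \in \mathfrak{B}_{\varepsilon}(\mathbb{P}_N)\}$, which by Theorem~\ref{thm_includes} satisfies $[\mathbb{P}^0]^N(\mathcal{E}) \geq 1 - \eta$ under the stated choice of $\varepsilon$. On this event, $\mathbb{P}^0$ is a feasible candidate in the inner supremum defining the DR objective. Since a supremum dominates any feasible evaluation, for any $\bm{\beta} \in \mathbb{R}^n$ (and in particular for any optimizer $\bm{\beta}^\star$ of~\ref{advdro_alternative}) we obtain
\begin{equation*}
\mathbb{E}_{\mathbb{P}^0}[\ell^{\alpha}_{\bm{\beta}^\star}(\bm{x}, y)] \;\leq\; \underset{\mathbb{Q} \in \mathfrak{B}_{\varepsilon}(\mathbb{P}_N)}{\sup}\; \mathbb{E}_{\mathbb{Q}}[\ell^{\alpha}_{\bm{\beta}^\star}(\bm{x}, y)].
\end{equation*}
The inclusion of this pointwise inequality inside the event $\mathcal{E}$ immediately yields the claim, since
\begin{equation*}
[\mathbb{P}^0]^N\Bigl(\mathbb{E}_{\mathbb{P}^0}[\ell^{\alpha}_{\bm{\beta}^\star}(\bm{x}, y)] \leq \underset{\mathbb{Q} \in \mathfrak{B}_{\varepsilon}(\mathbb{P}_N)}{\sup} \mathbb{E}_{\mathbb{Q}}[\ell^{\alpha}_{\bm{\beta}^\star}(\bm{x}, y)]\Bigr) \;\geq\; [\mathbb{P}^0]^N(\mathcal{E}) \;\geq\; 1-\eta.
\end{equation*}

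There is no genuine obstacle here: the statement is essentially a corollary of the measure-concentration result in Theorem~\ref{thm_includes} combined with the tautology that a feasible point lower-bounds a supremum. The only subtlety worth addressing explicitly in the write-up is that the event $\mathcal{E}$ and hence $\bm{\beta}^\star$ itself are random (they depend on the training sample $\mathbb{P}_N$), so the inequality must be read as holding sample-wise whenever $\mathcal{E}$ is realized; once this is noted, the probability bound transfers cleanly. No properties of $\ell^{\alpha}_{\bm{\beta}}$ (Lipschitzness, convexity, tractable reformulation) are needed for this particular statement — they would matter only for the other parts of Theorem~\ref{thm:stats1}, namely asymptotic consistency and the sparse support characterization of worst-case distributions.
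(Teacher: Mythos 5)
Your proof is correct. The paper's own proof is a one-line citation to Theorem 19 of \cite{kuhn2019wasserstein}, remarking only that the adversarial loss $\ell^{\alpha}_{\bm{\beta}}$ is finite-valued and continuous so that the hypotheses of that general finite-sample guarantee are met. Your write-up unpacks the argument in elementary terms: invoke the high-probability inclusion $\mathbb{P}^0 \in \mathfrak{B}_{\varepsilon}(\mathbb{P}_N)$ from Theorem~\ref{thm_includes}, then observe that on that event the worst-case supremum dominates the evaluation at the feasible point $\mathbb{P}^0$, for every $\bm{\beta}$ and in particular for $\bm{\beta}^\star$. The subtlety you flag about the sample-dependence of $\bm{\beta}^\star$ and the event is handled exactly as you describe: the pointwise inequality holds simultaneously for all $\bm{\beta}$ whenever $\mathcal{E}$ is realized, so the randomness of $\bm{\beta}^\star$ is immaterial. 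You are also right that measurability of the loss is the only structural ingredient needed for this particular implication; the continuity and finiteness the paper invokes are hypotheses of the cited black-box result rather than intrinsic requirements of the overestimation bound, which really is a feasibility-type tautology once Theorem~\ref{thm_includes} is in hand. The two routes buy the same conclusion; yours is self-contained and exposes the mechanism, whereas the paper's defers to the established reference for brevity.
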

\begin{proof}
    The statement follows from Theorem 19 of~\cite{kuhn2019wasserstein} given that $\ell^{\alpha}_{\bm{\beta}}$ is a finite-valued continuous loss function. 
\end{proof}
Theorem~\ref{theorem_over} states that the optimal value of~\ref{advdro_alternative} overestimates the true loss with arbitrarily high confidence $1-\eta$. Despite the desired overestimation of the true loss, we show that~\ref{advdro_alternative} is still asymptotically consistent if we restrict the set of admissible $\bm{\beta}$ to a bounded set\footnote{Note that, this is without loss of generality given that we can normalize the decision boundary of linear classifiers.}. 
\begin{theorem}
    If we restrict the hypotheses $\bm{\beta}$ to a bounded set $\mathcal{H}\subseteq \mathbb{R}^{n}$, and parameterize $\varepsilon$ as $\varepsilon_N$ to show its dependency to the sample size, then, under the assumptions of Theorem~\ref{thm_includes}, we have
    \begin{align*}
        \underset{\mathbb{Q} \in \mathfrak{B}_{\varepsilon_N}({\mathbb{P}}_N)}{\sup} \mathbb{E}_{\mathbb{Q}}[\ell^{\alpha}_{\bm{\beta}^\star}(\bm{x},y)] \underset{N \rightarrow \infty}{\longrightarrow} \mathbb{E}_{\mathbb{P}^0}[\ell^{\alpha}_{\bm\beta{^\star}}(\bm{x},y)] \quad \mathbb{P}^0\text{-almost surely},
   \end{align*}
   whenever $\varepsilon_N$ is set as specified in Theorem~\ref{thm_includes} along with its finite-sample confidence $\eta_N$, and they satisfy $\sum_{N \in \mathbb{N}} \eta_N < \infty$ and $\lim_{N \rightarrow \infty} \varepsilon_N = 0$.
\end{theorem}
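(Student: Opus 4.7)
The plan is to sandwich the worst-case expectation between two quantities that both converge $\mathbb{P}^0$-almost surely to $\mathbb{E}_{\mathbb{P}^0}[\ell^{\alpha}_{\bm{\beta}^\star}(\bm{x},y)]$. First I would invoke Theorem~\ref{thm_includes} together with a Borel--Cantelli argument: for each $N$, the event $E_N := \{\mathbb{P}^0 \notin \mathfrak{B}_{\varepsilon_N}({\mathbb{P}}_N)\}$ satisfies $[\mathbb{P}^0]^N(E_N) \leq \eta_N$, and the hypothesis $\sum_N \eta_N < \infty$ then implies that $[\mathbb{P}^0]^\infty$-almost surely, $\mathbb{P}^0 \in \mathfrak{B}_{\varepsilon_N}(\mathbb{P}_N)$ for all but finitely many $N$. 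On this event, the lower bound
\begin{equation*}
\mathbb{E}_{\mathbb{P}^0}[\ell^{\alpha}_{\bm{\beta}^\star}(\bm{x},y)] \;\leq\; \sup_{\mathbb{Q}\in\mathfrak{B}_{\varepsilon_N}(\mathbb{P}_N)} \mathbb{E}_{\mathbb{Q}}[\ell^{\alpha}_{\bm{\beta}^\star}(\bm{x},y)]
\end{equation*}
holds trivially for every admissible $\bm{\beta}^\star$.

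Next I would obtain the matching upper bound via Kantorovich--Rubinstein duality. Since $L^\alpha$ has Lipschitz modulus $1$ (Observation~\ref{obs:prelim}) and $\bm{\beta} \in \mathcal{H}$ is bounded, the map $(\bm{x},y) \mapsto \ell^{\alpha}_{\bm{\beta}}(\bm{x},y)$ is Lipschitz continuous with respect to the feature--label metric $d$, with a constant $L := \sup_{\bm{\beta}\in\mathcal{H}} \max\{\lVert\bm{\beta}\rVert_{q^\star},\, \Delta_{\bm{\beta}}/\kappa\}$ (where $\Delta_{\bm{\beta}}$ uniformly bounds the label-flip difference of $\ell^\alpha_{\bm{\beta}}$) that is finite because $\mathcal{H}$ is bounded. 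The duality identity for type-$1$ Wasserstein balls then yields
\begin{equation*}
\sup_{\mathbb{Q}\in\mathfrak{B}_{\varepsilon_N}(\mathbb{P}_N)} \mathbb{E}_{\mathbb{Q}}[\ell^{\alpha}_{\bm{\beta}}(\bm{x},y)] \;\leq\; \mathbb{E}_{\mathbb{P}_N}[\ell^{\alpha}_{\bm{\beta}}(\bm{x},y)] + L\cdot \varepsilon_N
\end{equation*}
uniformly over $\bm{\beta} \in \mathcal{H}$. Combined with the lower bound, this gives the sandwich
\begin{equation*}
\mathbb{E}_{\mathbb{P}^0}[\ell^{\alpha}_{\bm{\beta}^\star}] \;\leq\; \sup_{\mathbb{Q}\in\mathfrak{B}_{\varepsilon_N}(\mathbb{P}_N)} \mathbb{E}_{\mathbb{Q}}[\ell^{\alpha}_{\bm{\beta}^\star}] \;\leq\; \mathbb{E}_{\mathbb{P}_N}[\ell^{\alpha}_{\bm{\beta}^\star}] + L\cdot \varepsilon_N
\end{equation*}
almost surely for all sufficiently large $N$. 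Since $\varepsilon_N \to 0$ by assumption, it remains to show that $\mathbb{E}_{\mathbb{P}_N}[\ell^{\alpha}_{\bm{\beta}^\star}] - \mathbb{E}_{\mathbb{P}^0}[\ell^{\alpha}_{\bm{\beta}^\star}] \to 0$ almost surely.

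This last step is what I expect to be the main obstacle, because $\bm{\beta}^\star$ itself depends on $N$, so a pointwise strong law of large numbers does not suffice. My plan is to establish a uniform law of large numbers over $\bm{\beta}\in\mathcal{H}$, i.e.
\begin{equation*}
\sup_{\bm{\beta}\in\mathcal{H}} \bigl| \mathbb{E}_{\mathbb{P}_N}[\ell^{\alpha}_{\bm{\beta}}] - \mathbb{E}_{\mathbb{P}^0}[\ell^{\alpha}_{\bm{\beta}}] \bigr| \xrightarrow{N\to\infty} 0 \quad \mathbb{P}^0\text{-a.s.},
\end{equation*}
which applies in particular to any $\bm{\beta}^\star \in \mathcal{H}$. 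Such a Glivenko--Cantelli-type result follows from standard uniform convergence theory once one verifies (i) compactness of $\mathcal{H}$ (by taking its closure, without loss of generality), (ii) joint continuity of $(\bm{\beta},\bm{\xi}) \mapsto \ell^{\alpha}_{\bm{\beta}}(\bm{\xi})$, and (iii) an integrable envelope $\sup_{\bm{\beta}\in\mathcal{H}}\ell^{\alpha}_{\bm{\beta}}(\bm{\xi}) \leq C_1 + C_2 \lVert \bm{x}\rVert$, which is $\mathbb{P}^0$-integrable under the light-tail assumption of Theorem~\ref{thm_includes}. Sandwiching and letting $N\to\infty$ then yields the claimed almost-sure convergence.
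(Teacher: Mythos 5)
Your overall plan — sandwich the worst-case value between a Borel--Cantelli lower bound and a regularization upper bound, then close with a uniform law of large numbers — is conceptually similar to what is inside the proof of the result the paper invokes, but your upper bound contains a genuine error that breaks the argument.

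You claim the map $(\bm{x},y)\mapsto \ell^\alpha_{\bm\beta}(\bm{x},y)$ is Lipschitz with respect to the feature--label metric $d$, with constant $L = \sup_{\bm\beta\in\mathcal{H}}\max\{\lVert\bm\beta\rVert_{q^\star},\,\Delta_{\bm\beta}/\kappa\}$, and assert that the label-flip gap $\Delta_{\bm\beta}$ is finite because $\mathcal{H}$ is bounded. This is false. The label-flip gap is $\lvert\ell^\alpha_{\bm\beta}(\bm{x},+1)-\ell^\alpha_{\bm\beta}(\bm{x},-1)\rvert = \lvert L^\alpha(\bm\beta^\top\bm{x})-L^\alpha(-\bm\beta^\top\bm{x})\rvert$, which behaves like $2\lvert\bm\beta^\top\bm{x}\rvert$ as $\lVert\bm{x}\rVert\to\infty$ and is therefore unbounded over $\Xi$ for any nonzero $\bm\beta$, regardless of how small $\mathcal{H}$ is. Concretely, if $\mathbb{P}_N=\delta_{(\bm{x},+1)}$ with $\bm\beta^\top\bm{x}$ large and positive, then transporting a fraction $\varepsilon/\kappa$ of the mass to $(\bm{x},-1)$ costs $\varepsilon$ but inflates the expected loss by roughly $(\varepsilon/\kappa)\cdot\bm\beta^\top\bm{x}$, so the excess $\sup_{\mathbb{Q}\in\mathfrak{B}_\varepsilon(\mathbb{P}_N)}\mathbb{E}_{\mathbb{Q}}[\ell^\alpha_{\bm\beta}]-\mathbb{E}_{\mathbb{P}_N}[\ell^\alpha_{\bm\beta}]$ cannot be bounded by $L\varepsilon$ for any finite constant $L$. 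Thus the Kantorovich--Rubinstein step $\sup_{\mathbb{Q}}\mathbb{E}_{\mathbb{Q}}[\ell^\alpha_{\bm\beta}]\leq\mathbb{E}_{\mathbb{P}_N}[\ell^\alpha_{\bm\beta}]+L\varepsilon_N$ does not hold, and your sandwich collapses.

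The paper takes the correct route around this: it does not try to establish Lipschitz continuity of $\ell^\alpha_{\bm\beta}$ in the feature-label metric (which cannot hold), but instead verifies the weaker \emph{growth condition} $\ell^\alpha_{\bm\beta}(\bm{x},y)\leq C(1+d(\bm\xi,\bm\xi^0))$ uniformly over $\bm\beta\in\mathcal{H}$, which \emph{does} follow from boundedness of $\mathcal{H}$ and the bound $\log(1+\exp(u))\leq\log 2 + u_+$. The asymptotic consistency then follows by invoking Theorem~20 of \citet{kuhn2019wasserstein}, whose proof carefully shows that a shrinking Wasserstein ball cannot place enough mass on far-away, label-flipped points for the linear growth to matter, precisely the phenomenon your uniform-$L$ bound ignores. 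Your Borel--Cantelli lower bound and the uniform-law-of-large-numbers closing step are fine (and indeed parallel pieces of that machinery), but to salvage the upper bound you would have to replace the fixed Lipschitz constant with a sample-dependent $\lambda_N$ and argue that $\lambda_N\varepsilon_N\to 0$, which is essentially re-deriving the cited theorem.
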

\begin{proof}
    If we show that there exists $\bm{\xi}^0 \in \Xi$ and $C > 0$ such that $\ell^{\alpha}_{\bm{\beta}}(\bm{x}, y) \leq C(1 + d(\bm{\xi}, \bm{\xi}^0))$ holds for all $\bm{\beta} \in \mathcal{H}$ and $\bm{\xi} \in \Xi$ (that is, the adversarial loss satisfies a growth condition), the statement will follow immediately from Theorem 20 of~\citep{kuhn2019wasserstein}. 

    To see that the growth condition is satisfied, we first substitute the definition of $\ell^{\alpha}_{\bm{\beta}}$ and $d$ explicitly, and note that we would like to show there exists $\bm{\xi}^0 \in \Xi$ and $C > 0$ such that
    \begin{align*}
        \log(1 + \exp(-y\cdot\bm{\beta}^\top \bm{x} + \alpha \cdot \lVert \bm{\beta}\rVert_{p^\star})) \leq C(1 + \lVert \bm{x} - \bm{x}^0 \rVert_{q} + \kappa \cdot \indicate{y \neq y^0})
    \end{align*}
    holds for all $\bm{\beta} \in \mathcal{H}$ and $\bm{\xi} \in \Xi$. We take $\bm{\xi}^0 = (\mathbf{0}, y^0)$ and show that the right-hand side of the inequality can be lower bounded as:
    \begin{align*}
    C(1 + \lVert \bm{x} - \bm{x}^0 \rVert_{q} + \kappa \cdot \indicate{y \neq y^0}) & = 
        C(1 + \lVert \bm{x} \rVert_{q} + \kappa \cdot \indicate{y \neq y^0})\\
        & \geq C(1 + \lVert \bm{x} \rVert_{q}).
    \end{align*}
    Moreover, the left-hand side of the inequality can be upper bounded for any $\bm{\beta} \in \mathcal{H} \subseteq [-M, M]^{n}$ (for some $M > 0$) and $\bm{\xi} = (\bm{x}, y) \in \Xi$ as:
    \begin{align*}
    \log(1 + \exp(-y\cdot\bm{\beta}^\top \bm{x} + \alpha \cdot \lVert \bm{\beta}\rVert_{p^\star})) & \leq \log(1+ \exp(\lvert \bm{\beta}^\top \bm{x}\rvert + \alpha \cdot \lVert \bm{\beta} \rVert_{p^\star}))\\ 
    & \leq \log(2 \cdot \exp(\lvert \bm{\beta}^\top \bm{x}\rvert + \alpha \cdot \lVert \bm{\beta} \rVert_{p^\star}))\\
    & = \log(2) + \lvert \bm{\beta}^\top \bm{x}\rvert + \alpha \cdot \lVert \bm{\beta} \rVert_{p^\star} \\
    & \leq \log(2) + \sup_{\bm{\beta} \in [-M, M]^n}\{\lvert \bm{\beta}^\top \bm{x}\rvert\} + \alpha \cdot \sup_{\bm{\beta} \in [-M, M]^n}\{ \lVert \bm{\beta} \rVert_{p^\star} \} \\
    & = \log(2) + M \cdot \lVert \bm{x} \rVert_{1} + M \cdot \alpha \\
    & \leq\log(2) +  M \cdot n^{(q-1)/q} \cdot \lVert \bm{x} \rVert_{1} + M \cdot \alpha 
    \end{align*} 
    where the final inequality uses Hölder's inequality to bound the $1$-norm with the $q$-norm. Thus, it suffices to show that we have
    \begin{align*}
        \log(2) +  M \cdot n^{(q - 1)/q} \cdot \lVert \bm{x} \rVert_{1} + M \cdot \alpha \leq C(1 + \lVert \bm{x} \rVert_{q})  \quad \forall \bm{\xi} \in \Xi,
    \end{align*}
    which is satisfied for any $C \geq \max \{ \log(2) + M \cdot \alpha, \ M\cdot n^{(q-1)/q}  \}$. This completes the proof by showing the growth condition is satisfied.
\end{proof}

So far, we reviewed tight characterizations for $\varepsilon$ so that the ball $\mathfrak{B}_{\varepsilon}({{\mathbb{P}}}_{{N}})$ includes the true distribution $\mathbb{P}^0$ with arbitrarily high confidence, proved that the DRO problem~\ref{advdro_alternative} overestimates the true loss, while converging to the true problem asymptotically as the confidence $1-\eta$ increases and the radius $\varepsilon$ decreases simultaneously. Finally, we discuss that for optimal solutions $\bm{\beta}^\star$ to~\ref{advdro_alternative}, there are worst case distributions $\mathbb{Q}^\star \in \mathfrak{B}_{\varepsilon}({{\mathbb{P}}}_{{N}})$ of nature's problem that are supported on at most $N+1$ outcomes.
\begin{theorem}
    If we restrict the hypotheses $\bm{\beta}$ to a bounded set $\mathcal{H}\subseteq \mathbb{R}^{n}$, then there are distributions $\mathbb{Q}^\star \in \mathfrak{B}_{\varepsilon}({{\mathbb{P}}}_{{N}})$ that are supported on at most $N+1$ outcomes and satisfy:
    \begin{align*}
        \mathbb{E}_{\mathbb{Q}^\star} [\ell^{\alpha}_{\bm{\beta}}(\bm{x}, y)] \; = \; \underset{\mathbb{Q} \in \mathfrak{B}_{\varepsilon}({\mathbb{P}}_N)}{\sup} \mathbb{E}_{\mathbb{Q}} [\ell^{\alpha}_{\bm{\beta}}(\bm{x}, y)].
    \end{align*}
\end{theorem}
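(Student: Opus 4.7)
The plan is to exploit the primal-dual structure already used for Corollary~\ref{corr:tractable_og} and then invoke a classical representation result on extreme points of sets of measures defined by finitely many linear constraints. First, mirroring the decomposition in the proof of Proposition~\ref{prop:reformulate} with $\widehat{\lambda}=0$, I would rewrite the inner supremum in~\ref{advdro_alternative} as the infinite-dimensional LP
\begin{align*}
\sup_{\mathbb{Q}^1,\ldots,\mathbb{Q}^N\in\mathcal{P}(\Xi)} \ \ \frac{1}{N}\sum_{i=1}^{N}\int_{\Xi}\ell^{\alpha}_{\bm{\beta}}(\bm{\xi})\,d\mathbb{Q}^i(\bm{\xi}) \quad \text{s.t.} \quad \frac{1}{N}\sum_{i=1}^{N}\int_{\Xi} d(\bm{\xi},\bm{\xi}^i)\,d\mathbb{Q}^i(\bm{\xi}) \leq \varepsilon, \ \ \int_{\Xi} d\mathbb{Q}^i = 1 \ \forall i\in [N],
\end{align*}
where each conditional $\mathbb{Q}^i$ is recovered from the optimal coupling and $\mathbb{Q}^\star=\frac{1}{N}\sum_i\mathbb{Q}^{\star,i}$. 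Crucially, this LP over the cone of non-negative measures is defined by exactly $N+1$ linear constraints: the $N$ normalization constraints plus the single Wasserstein-budget constraint.

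Second, I would apply the Richter--Rogosinski representation theorem (equivalently, a Carath\'eodory-type argument on the extreme points of moment cones): the extreme points of the feasible set $\mathcal{F}$ of this LP are tuples $(\mathbb{Q}^1,\ldots,\mathbb{Q}^N)$ whose total support is concentrated on at most $N+1$ atoms of $\Xi$. Since the objective is linear in the tuple $(\mathbb{Q}^1,\ldots,\mathbb{Q}^N)$, provided the supremum is attained it is attained at such an extreme point, in which case $\mathbb{Q}^\star$ inherits a support of cardinality at most $N+1$.

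Third, I would establish attainment via the dual. By the strong-duality result already invoked in Corollary~\ref{corr:tractable_og}, the value of the inner supremum equals the finite optimal value of its dual. Let $(\lambda^\star,\bm{s}^\star)$ be any dual optimizer. For each $i$, the tight set $\Xi^{\star,i}:=\{\bm{\xi}\in\Xi:\ell^{\alpha}_{\bm{\beta}}(\bm{\xi})-\lambda^\star d(\bm{\xi},\bm{\xi}^i)=s_i^\star\}$ is non-empty (otherwise $s_i^\star$ could be decreased, contradicting optimality of the dual), and boundedness of $\mathcal{H}$ together with the Lipschitz/coercive structure of $\ell^{\alpha}_{\bm{\beta}}$ (for $\lambda^\star$ large enough to dominate the Lipschitz modulus, \emph{cf.} Corollary~\ref{corr:tractable_og}) ensures this tight set is compact. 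Picking $\bm{\xi}^{\star,i}\in\Xi^{\star,i}$ and forming $\mathbb{Q}^{\star,i}$ as a convex combination of $\delta_{\bm{\xi}^i}$ and $\delta_{\bm{\xi}^{\star,i}}$ with weights chosen by complementary slackness, yields a primal feasible tuple that attains the optimum.

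The main obstacle is the combinatorial step that sharpens the naive bound of $2N$ atoms (two per conditional) down to $N+1$. Here the key observation is that the Wasserstein constraint is a single \emph{global} coupling constraint, so complementary slackness only requires one conditional to be a genuine two-atom mixture in order to tighten the budget; the remaining $N-1$ can collapse to $\delta_{\bm{\xi}^i}$ whenever $\ell^{\alpha}_{\bm{\beta}}(\bm{\xi}^i)=s_i^\star$, or absorb into the common extreme-point count otherwise. Carrying out this reduction rigorously is exactly the content of the Richter--Rogosinski argument applied to our $N+1$ constraints, and I would structure that final counting step analogously to Theorem~18 of~\cite{kuhn2019wasserstein} to deliver the desired $\mathbb{Q}^\star$ supported on at most $N+1$ outcomes.
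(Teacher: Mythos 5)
Your proposal takes essentially the same route as the cited proof. The paper's own proof of this theorem is a one-line citation to \citep{YKW21:linear_optimization_wasserstein}, whose underlying argument is exactly what you sketch: reformulate the inner supremum as a generalized moment problem over the conditionals $\mathbb{Q}^1,\ldots,\mathbb{Q}^N$ with $N+1$ linear moment constraints (the $N$ normalizations plus the single Wasserstein budget), then invoke an extreme-point / Richter--Rogosinski argument to conclude that an optimizer is supported on at most $N+1$ atoms. Your identification of the constraint count and the moment-problem structure is correct, and the observation that the single shared budget constraint is what sharpens the naive $2N$-atom bound down to $N+1$ is the right intuition.

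That said, two pieces of the proposal are off. First, the attainment step is flawed as written. You assert that the tight set $\Xi^{\star,i}=\{\bm{\xi}\in\Xi : \ell^{\alpha}_{\bm{\beta}}(\bm{\xi})-\lambda^\star d(\bm{\xi},\bm{\xi}^i)=s_i^\star\}$ must be non-empty ``otherwise $s_i^\star$ could be decreased.'' This reasoning breaks precisely when $\sup_{\bm{\xi}}\,\ell^{\alpha}_{\bm{\beta}}(\bm{\xi})-\lambda^\star d(\bm{\xi},\bm{\xi}^i)$ is finite but achieved only asymptotically as $\lVert\bm{\xi}\rVert\to\infty$ --- which is what can go wrong at the boundary $\lambda^\star=\lVert\bm{\beta}\rVert_{q^\star}$ for a $1$-Lipschitz loss: $s_i^\star$ then equals this finite supremum, cannot be decreased, and yet $\Xi^{\star,i}$ is empty. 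Handling this boundary case is precisely the technical content delegated to \citep{YKW21:linear_optimization_wasserstein}. For the adversarial logistic loss one can in fact sidestep the issue, because $\lvert{L^{\alpha}}'(z)\rvert<1$ holds \emph{strictly} for every finite $z$, so the map $t\mapsto\ell^{\alpha}_{\bm{\beta}}(\bm{x}^i-t\bm{u},y)-\lambda t$ is strictly decreasing on $t\ge 0$ for every $\lambda\ge\lVert\bm{\beta}\rVert_{q^\star}$ and any unit direction $\bm{u}$, forcing the supremum to be attained at $t=0$ (i.e.\ at $\bm{\xi}\in\{(\bm{x}^i,\pm 1)\}$); but your argument does not exploit this structure, and the caveat ``for $\lambda^\star$ large enough to dominate the Lipschitz modulus'' does not follow from the dual feasibility constraint, which only enforces $\lambda\ge\lVert\bm{\beta}\rVert_{q^\star}$. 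Second, the reference you cite for the final counting step, Theorem~18 of \citep{kuhn2019wasserstein}, is the wrong result: that theorem is a finite-sample concentration bound for calibrating the Wasserstein radius, not a statement about the extremal support of worst-case distributions. The relevant support result is in \citep{YKW21:linear_optimization_wasserstein} itself, or in a textbook statement of the Richter--Rogosinski / Winkler representation theorem for moment sets.
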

\begin{proof}
    The proof follows from~\citep{YKW21:linear_optimization_wasserstein}.
\end{proof}
See the proof of~\citet[Theorem 8]{selvi2022wasserstein} and the discussion that follows for insights and further analysis on these results presented.

\subsection{Proof of Theorem~\ref{thm:stats2}}\label{app:stats2}
Firstly, since $\widehat{\mathbb{P}}_{\widehat{N}}$ is constructed from i.i.d.\@ samples of $\widehat{\mathbb{P}}$, we can overestimate the distance $\widehat{\varepsilon}_1= \mathrm{W}(\widehat{\mathbb{P}}_{\widehat{N}}, \widehat{\mathbb{P}})$ analogously by applying Theorem~\ref{thm_includes}, \textit{mutatis mutandis}. This leads us to the following result where the joint (independent) $N$-fold product distribution of $\mathbb{P}^0$ and the $\widehat{N}$-fold product distribution of $\widehat{\mathbb{P}}$ is denoted below by $[\mathbb{P}^0 \times \widehat{\mathbb{P}}]^{N \times \widehat{N}}$.
\begin{theorem}\label{thm:final_act}
    Assume that there exist $a > 1$ and $A > 0$ such that $\mathbb{E}_{\mathbb{P}^0}[\exp(\lVert \bm{\xi} \rVert^a)] \leq A$, and there exist $\widehat{a} > 1$ and $\widehat{A} > 0$ such that $\mathbb{E}_{\widehat{\mathbb{P}}}[\exp(\lVert \bm{\xi} \rVert^{\widehat{a}})] \leq \widehat{A}$ for a norm $\lVert \cdot \rVert$ on $\mathbb{R}^n$. Then, there are constants $c_1, c_2 > 0$ that only depends on $\mathbb{P}^0$ through $a$, $A$, and $n$, and constants $\widehat{c}_1, \widehat{c}_2 > 0$ that only depends on $\widehat{\mathbb{P}}$ through $\widehat{a}$, $\widehat{A}$, and $n$ such that $[\mathbb{P}^0 \times \widehat{\mathbb{P}}]^{N \times \widehat{N}}( \mathbb{P}^0 \in \mathfrak{B}_{\varepsilon}(\mathbb{P}_N) \cap  \mathfrak{B}_{\widehat{\varepsilon}}(\widehat{\mathbb{P}}_{\widehat{N}})) \geq 1 - \eta$ holds for any confidence level $\eta \in (0,1)$ as long as the Wasserstein ball radii satisfy the following characterization
    \begin{align*}
        & \varepsilon \geq 
        \begin{cases}
            \left(\dfrac{\log(c_1 / \eta_1)}{c_2 \cdot N} \right)^{1 / \max\{n, 2\}} & \text{if } N \geq \dfrac{\log(c_1 / \eta_1)}{c_2} \\
            \left(\dfrac{\log(c_1 / \eta_1)}{c_2 \cdot N} \right)^{1 / a} &\text{otherwise}
        \end{cases} \\
        & \widehat{\varepsilon} \geq \mathrm{W}(\mathbb{P}^0, \widehat{\mathbb{P}}) + 
        \begin{cases}
            \left(\dfrac{\log(\widehat{c}_1 / \eta_2)}{\widehat{c}_2 \cdot \widehat{N}} \right)^{1 / \max\{n, 2\}} & \text{if } \widehat{N} \geq \dfrac{\log(\widehat{c}_1 / \eta_2)}{\widehat{c}_2} \\
            \left(\dfrac{\log(\widehat{c}_1 / \eta_2)}{\widehat{c}_2 \cdot \widehat{N}} \right)^{1 / \widehat{a}} &\text{otherwise}
        \end{cases}
    \end{align*}
    for some $\eta_1, \eta_2 > 0$ satisfying $\eta_1 + \eta_2 = \eta$.
\end{theorem}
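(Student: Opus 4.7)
The plan is to prove the theorem by combining two independent applications of Theorem~\ref{thm_includes} with the triangle inequality for the Wasserstein metric and a union bound. Specifically, I will set up two events --- $E_1 = \{\mathrm{W}(\mathbb{P}^0, \mathbb{P}_N) \leq \varepsilon\}$ and $E_2 = \{\mathrm{W}(\widehat{\mathbb{P}}, \widehat{\mathbb{P}}_{\widehat{N}}) \leq \widehat{\varepsilon} - \mathrm{W}(\mathbb{P}^0, \widehat{\mathbb{P}})\}$ --- and show that, on $E_1 \cap E_2$, the containment $\mathbb{P}^0 \in \mathfrak{B}_\varepsilon(\mathbb{P}_N) \cap \mathfrak{B}_{\widehat{\varepsilon}}(\widehat{\mathbb{P}}_{\widehat{N}})$ holds, while $E_1 \cap E_2$ has probability at least $1-\eta$ under $[\mathbb{P}^0 \times \widehat{\mathbb{P}}]^{N \times \widehat{N}}$.

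First, I would apply Theorem~\ref{thm_includes} to the data-generating distribution $\mathbb{P}^0$ with the given exponential moment assumption $(a, A)$. Under the stated lower bound on $\varepsilon$ with constants $c_1, c_2$, this yields $[\mathbb{P}^0]^N(E_1) \geq 1 - \eta_1$. Next, I would apply the same theorem, \emph{mutatis mutandis}, to the auxiliary distribution $\widehat{\mathbb{P}}$ with its exponential moment constants $(\widehat{a}, \widehat{A})$ and the corresponding $\widehat{c}_1, \widehat{c}_2$, using $\widehat{N}$ samples. Writing $\widehat{\varepsilon}_1 := \widehat{\varepsilon} - \mathrm{W}(\mathbb{P}^0, \widehat{\mathbb{P}})$, the prescribed lower bound on $\widehat{\varepsilon}$ is exactly the threshold from Theorem~\ref{thm_includes} applied to $\widehat{\varepsilon}_1$, so $[\widehat{\mathbb{P}}]^{\widehat{N}}(E_2) \geq 1 - \eta_2$.

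Next, I would invoke the triangle inequality for the type-$1$ Wasserstein metric on $E_2$:
\begin{equation*}
\mathrm{W}(\mathbb{P}^0, \widehat{\mathbb{P}}_{\widehat{N}}) \;\leq\; \mathrm{W}(\mathbb{P}^0, \widehat{\mathbb{P}}) + \mathrm{W}(\widehat{\mathbb{P}}, \widehat{\mathbb{P}}_{\widehat{N}}) \;\leq\; \mathrm{W}(\mathbb{P}^0, \widehat{\mathbb{P}}) + \widehat{\varepsilon}_1 \;=\; \widehat{\varepsilon},
\end{equation*}
so on $E_2$ we have $\mathbb{P}^0 \in \mathfrak{B}_{\widehat{\varepsilon}}(\widehat{\mathbb{P}}_{\widehat{N}})$; similarly $E_1$ yields $\mathbb{P}^0 \in \mathfrak{B}_{\varepsilon}(\mathbb{P}_N)$. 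Finally, since $\mathbb{P}_N$ and $\widehat{\mathbb{P}}_{\widehat{N}}$ are generated from independent samples, $E_1$ and $E_2$ are independent under the product measure $[\mathbb{P}^0 \times \widehat{\mathbb{P}}]^{N \times \widehat{N}}$; even without using independence, a union bound gives
\begin{equation*}
[\mathbb{P}^0 \times \widehat{\mathbb{P}}]^{N \times \widehat{N}}(E_1 \cap E_2) \;\geq\; 1 - \eta_1 - \eta_2 \;=\; 1 - \eta,
\end{equation*}
concluding the proof.

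I do not expect any serious obstacles here, as the argument is essentially a ``borrow Theorem~\ref{thm_includes} twice and glue with the triangle inequality'' strategy. The only subtlety is that $\widehat{\mathbb{P}}_{\widehat{N}}$ concentrates around $\widehat{\mathbb{P}}$ (not $\mathbb{P}^0$), which is precisely why the offset $\mathrm{W}(\mathbb{P}^0, \widehat{\mathbb{P}})$ appears in the lower bound on $\widehat{\varepsilon}$ and why the theorem cannot guarantee $\widehat{\varepsilon} \to 0$ as $\widehat{N} \to \infty$, consistent with the remark on the lack of asymptotic consistency of~\ref{synth}.
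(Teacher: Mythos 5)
Your proposal is correct and mirrors the paper's proof: both arguments apply Theorem~\ref{thm_includes} once to $\mathbb{P}^0$ and once (\emph{mutatis mutandis}) to $\widehat{\mathbb{P}}$, translate the concentration of $\widehat{\mathbb{P}}_{\widehat{N}}$ around $\widehat{\mathbb{P}}$ into a bound on $\mathrm{W}(\mathbb{P}^0, \widehat{\mathbb{P}}_{\widehat{N}})$ via the triangle inequality with the offset $\mathrm{W}(\mathbb{P}^0, \widehat{\mathbb{P}})$, and conclude with a union bound over the two failure events. The only cosmetic difference is that you note the events are actually independent under the product measure (which the paper implicitly uses when factoring the product measure), but then correctly observe that a union bound suffices regardless, exactly as in the paper.
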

\begin{proof}
    It immediately follows from Theorem~\ref{thm_includes} that $[\mathbb{P}^0]^{N}( \mathbb{P}^0 \in \mathfrak{B}_{\varepsilon}({\mathbb{P}}_{N})) \geq 1 - \eta_1$ holds. If we take $\widehat{\varepsilon}_1 > 0$ as
    \begin{align*}
        \widehat{\varepsilon}_1 \geq 
        \begin{cases}
            \left(\dfrac{\log(\widehat{c}_1 / \eta_2)}{\widehat{c}_2 \cdot \widehat{N}} \right)^{1 / \max\{n, 2\}} & \text{if } \widehat{N} \geq \dfrac{\log(\widehat{c}_1 / \eta_2)}{\widehat{c}_2} \\
            \left(\dfrac{\log(\widehat{c}_1 / \eta_2)}{\widehat{c}_2 \cdot \widehat{N}} \right)^{1 / \widehat{a}} &\text{otherwise},
        \end{cases}
    \end{align*}
    then, we similarly have $[\widehat{\mathbb{P}}]^{\widehat{N}}( \widehat{\mathbb{P}} \in \mathfrak{B}_{\widehat{\varepsilon}_1}({\widehat{\mathbb{P}}}_{\widehat{N}})) \geq 1 - \eta_2$. Since the following implication follows from the triangle inequality:
    \begin{align*}
        \widehat{\mathbb{P}} \in \mathfrak{B}_{\widehat{\varepsilon}_1}({\widehat{\mathbb{P}}}_{\widehat{N}}) \implies \mathbb{P}^0 \in \mathfrak{B}_{\widehat{\varepsilon}_1 + \mathrm{W}(\mathbb{P}^0, \widehat{\mathbb{P}})}({\widehat{\mathbb{P}}}_{\widehat{N}}),
    \end{align*}
    we have that $[\widehat{\mathbb{P}}]^{\widehat{N}}( \mathbb{P}^0 \in \mathfrak{B}_{\varepsilon}({\widehat{\mathbb{P}}}_{\widehat{N}})) \geq 1 - \eta_2$. These results, along with the facts that $\widehat{\mathbb{P}}_{\widehat{N}}$ and $\mathbb{P}_N$ are independently sampled from their true distributions, imply:
    \begin{align*}
        & [\mathbb{P}^0 \times \widehat{\mathbb{P}}]^{N \times \widehat{N}}( \mathbb{P}^0 \not\in \mathfrak{B}_{\varepsilon}(\mathbb{P}_N) \lor \mathbb{P}^0 \not\in\mathfrak{B}_{\widehat{\varepsilon}}(\widehat{\mathbb{P}}_{\widehat{N}})) \\
       \leq &  [\mathbb{P}^0 \times \widehat{\mathbb{P}}]^{N \times \widehat{N}}( \mathbb{P}^0 \not\in \mathfrak{B}_{\varepsilon}(\mathbb{P}_N)) +  [\mathbb{P}^0 \times \widehat{\mathbb{P}}]^{N \times \widehat{N}}(\mathbb{P}^0 \not\in\mathfrak{B}_{\widehat{\varepsilon}}(\widehat{\mathbb{P}}_{\widehat{N}}))\\
       = & [\mathbb{P}^0]^{N}( \mathbb{P}^0 \not\in \mathfrak{B}_{\varepsilon}(\mathbb{P}_N)) +  [\widehat{\mathbb{P}}]^{\widehat{N}}(\mathbb{P}^0 \not\in\mathfrak{B}_{\widehat{\varepsilon}}(\widehat{\mathbb{P}}_{\widehat{N}})) < \eta_1 + \eta_2
    \end{align*}
    implying the desired result $[\mathbb{P}^0 \times \widehat{\mathbb{P}}]^{N \times \widehat{N}}( \mathbb{P}^0 \in \mathfrak{B}_{\varepsilon}(\mathbb{P}_N) \cap  \mathfrak{B}_{\widehat{\varepsilon}}(\widehat{\mathbb{P}}_{\widehat{N}})) \geq 1 - \eta$.
\end{proof}
The second statement immediately follows under the assumptions of Theorem~\ref{thm:final_act}:~\ref{synth} overestimates the true loss analogously as Theorem~\ref{theorem_over} with an identical proof.

\section{EXPONENTIAL CONIC REFORMULATION OF~\ref{advdro}}
For any $i \in [N]$, the constraints of~\ref{advdro} are
\begin{align*}
\begin{cases}
    \log(1 + \exp(  - y^i\cdot \bm{\beta}^\top \bm{x}^i +  \alpha \cdot \lVert \bm{\beta} \rVert_{p^\star} )) \leq s_i \\
    \log(1 + \exp( y^i\cdot \bm{\beta}^\top \bm{x}^i +  \alpha \cdot \lVert \bm{\beta} \rVert_{p^\star} )) - \lambda \cdot \kappa \leq s_i,
\end{cases}
\end{align*}
which, by using an auxiliary variable $u$, can be written as
\begin{align*}
\begin{cases}
    \log(1 + \exp(  - y^i\cdot \bm{\beta}^\top \bm{x}^i +  u)) \leq s_i \\
    \log(1 + \exp( y^i\cdot \bm{\beta}^\top \bm{x}^i +  u)) - \lambda \cdot \kappa \leq s_i \\
    \alpha \cdot \lVert \bm{\beta} \rVert_{p^\star} \leq u.
\end{cases}
\end{align*}
Following the conic modeling guidelines of~\cite{mosek2016modeling}, for new variables $v^{+}_i, w^{+}_i \in \mathbb{R}$, the first constraint can be written as
\begin{align*}
    \begin{cases}
    v^{+}_i + w^{+}_i \leq 1, \ (v^{+}_i, 1, [-u + y^i \cdot \bm{\beta}^\top \bm{x}^i) - s_i] \in \mathcal{K}_{\exp}, \ (w^{+}_i, 1, -s_i) \in \mathcal{K}_{\exp},
    \end{cases}
\end{align*}
by using the definition of the exponential cone $\mathcal{K}_{\exp}$. Similarly, for new variables $v^{-}_i, w^{-}_i \in \mathbb{R}$, the second constraint can be written as 
\begin{align*}
    \begin{cases}
    v^{-}_i + w^{-}_i \leq 1, \ (v^{-}_i, 1, [-u - y^i \cdot \bm{\beta}^\top \bm{x}^i] - s_i - \lambda \cdot \kappa) \in \mathcal{K}_{\exp}, \ (w^{-}_i, 1, -s_i - \lambda \cdot \kappa) \in \mathcal{K}_{\exp}.
    \end{cases}
\end{align*}
Applying this for all $i \in [N]$ concludes that the following is the conic formulation of~\ref{advdro}:
\begin{align*}
    \begin{array}{cll}
        \underset{
        \substack{\bm{\beta},\ \lambda, \ \bm{s}, \ u \\ \bm{v}^{+}, \bm{w}^{+}, \bm{v}^{-}, \bm{w}^{-}}
        }{\mathrm{minimize}} & \displaystyle \lambda \cdot \varepsilon + \dfrac{1}{N} \sum_{i\in[N]} s_i &  \\
        \mathrm{subject\;to} & v^{+}_i + w^{+}_i \leq 1 & \forall i \in [N] \\[.6em]
        & (v^{+}_i, 1, [-u + y^i \cdot \bm{\beta}^\top \bm{x}^i] - s_i) \in \mathcal{K}_{\exp}, \ (w^{+}_i, 1, -s_i) \in \mathcal{K}_{\exp} & \forall i \in [N] \\[.6em]
        & v^{-}_i + w^{-}_i \leq 1& \forall i \in [N] \\[.6em]
        & (v^{-}_i, 1, [-u - y^i \cdot \bm{\beta}^\top \bm{x}^i] - s_i - \lambda \cdot \kappa) \in \mathcal{K}_{\exp}, \ (w^{-}_i, 1, -s_i- \lambda \cdot \kappa) \in \mathcal{K}_{\exp}& \forall i \in [N]  \\[.6em]
        & \alpha \cdot \lVert \bm{\beta} \rVert_{{p}^\star} \leq u \\[.6em]
        & \lVert \bm{\beta} \rVert_{q^\star} \leq \lambda & \\[.6em]
        & \bm{\beta} \in \mathbb{R}^n, \ \lambda \geq 0, \ \bm{s} \in \mathbb{R}^N, \ u \in \mathbb{R}, \ \bm{v}^{+}, \bm{w}^{+}, \bm{v}^{-}, \bm{w}^{-} \in \mathbb{R}^N.
    \end{array}
\end{align*}

\section{FURTHER DETAILS ON NUMERICAL EXPERIMENTS}
\subsection{UCI Experiments} \label{app_uai_data_sect}
\paragraph{Preprocessing UCI datasets.} We experiment on 10 UCI datasets \citep{kelly2023uci} (\textit{cf.}~Table \ref{tab:UCI_appendix}). We use Python 3 for preprocessing these datasets. Classification problems with more than two classes are converted to binary classification problems (most frequent class/others). For all datasets, numerical features are standardized, the ordinal categorical features are left as they are, and the nominal categorical features are processed via one-hot encoding. As mentioned in the main paper, we obtain auxiliary (synthetic) datasets via SDV, which is also implemented in Python 3.

\begin{table}[h]
\caption{Size of the UCI datasets.}
\label{tab:UCI_appendix}
\begin{center}
\begin{small}
\begin{tabular}{lcccc}
\toprule
DataSet &  $N$ & $\widehat{N}$ & $N_{\text{te}}$ & $n$ \\
\midrule
     {{absent}}  &  {111}  &  {333} & {296} & {74} \\
     {{annealing}}  &  {134}  &  {404} & {360} & {41} \\
     {{audiology}}  &  {33}  &  {102} & {91} & {102} \\
    {{breast-cancer}}  &  {102}  &  {307} & {274} & {90} \\
     {{contraceptive}}  &  {220}  &  {663} & {590} & {23} \\
     {{dermatology}}  &  {53}  &  {161} & {144} & {99} \\
     {{ecoli}}  &  {50}  &  {151} & {135} & {9} \\
     {{spambase}}  &  {690}  &  {2,070} & {1,841} & {58} \\
     {{spect}}  &  {24}  &  {72} & {64} & {23} \\
     {{prim-tumor}}  &  {50}  &  {153} & {136} & {32}  \\
\bottomrule
\end{tabular}
\end{small}
\end{center}
\vskip -0.1in
\end{table}

\paragraph{Detailed misclassification results on the UCI datasets.} Table \ref{appendix_large_table} contains detailed results on the out-of-sample error rates of each method on 10 UCI datasets for classification. All parameters are $5$-fold cross-validated: Wasserstein radii from the grid $\{ 10^{-6}, 10^{-5}, 10^{-4}, 10^{-3}, 10^{-2}, 10^{-1}, 0,1,2,5,10 \}$, $\kappa$ from the grid $\{1, \sqrt{n}, n\}$ the weight parameter of \texttt{ARO+Aux} from grid $\{10^{-6}, 10^{-5}, 10^{-4}, 10^{-3}, 10^{-2}, 10^{-1}, 0, 1\}$. We fix the norm defining the feature-label metric to the $\ell_1$-norm, and test $\ell_2$-attacks, but other choices with analogous results are also implemented.

\begin{table*}[!ht] 
\caption{Mean ($\pm$ std) out-of-sample errors of UCI datasets, each with 10 simulations. Results for adversarial ($\ell_2$-)attack strengths $\alpha = 0.05$ and $\alpha = 0.2$ are shared.}
\label{appendix_large_table}
\centering
\resizebox{\columnwidth}{!}{%
\begin{tabular}{lcccccc}
\toprule
Data   & $\alpha$ &  \texttt{ERM}  &  \texttt{ARO} &  \texttt{ARO+Aux} & \texttt{DRO+ARO} & \texttt{DRO+ARO+Aux}\\
\midrule 
     \multirow{2}*{{absent}}  & $0.05$ & 44.02\% ($\pm$ 2.89) & 38.82\% ($\pm$ 2.86) & 35.95\% ($\pm$ 3.78) &  34.22\% ($\pm$ 2.70) & \textbf{32.64\%} ($\pm$ 2.54)\\
     &   $0.20$ &  73.65\% ($\pm$ 4.14) & 51.49\% ($\pm$ 3.39) & 49.56\% ($\pm$ 3.80) &  45.61\% ($\pm$ 2.32) & \textbf{44.90\%} ($\pm$ 2.30)\\
     \multirow{2}*{{annealing}}  & $0.05$ &  18.08\% ($\pm$ 1.89) & 16.61\% ($\pm$ 2.16) &  14.97\% ($\pm$ 1.39) & 13.50\% ($\pm$ 2.98) & \textbf{12.78\%} ($\pm$ 2.78) \\
     &   $ 0.20$ & 37.31\% ($\pm$ 3.92) & 23.08\% ($\pm$ 2.82) & 21.30\% ($\pm$ 1.93) & 20.70\% ($\pm$ 1.32) & \textbf{19.53\%} ($\pm$ 1.42) \\
     \multirow{2}*{{audiology}}  &   $ 0.05$ &  21.43\% ($\pm$ 3.64) & 21.54\% ($\pm$ 3.92)& 17.03\% ($\pm$ 2.90) & 11.76\% ($\pm$ 3.28) & \textbf{\phantom{0}9.01\%} ($\pm$ 3.54) \\
     &   $ 0.20$ & 37.91\% ($\pm$ 6.78) & 29.34\% ($\pm$ 5.89)& 20.44\% ($\pm$ 2.75) & 20.00\% ($\pm$ 3.01) & \textbf{17.91\%} ($\pm$ 3.28) \\
    \multirow{2}*{{breast-cancer}}   & $0.05$ &  \phantom{0}4.74\% ($\pm$ 1.26) & \phantom{0}4.93\% ($\pm$ 1.75)& \phantom{0}3.87\% ($\pm$ 1.17)& \phantom{0}3.06\% ($\pm$ 0.79)& \textbf{\phantom{0}2.52\%} ($\pm$ 0.50) \\
     &   $ 0.20$ & \phantom{0}9.93\% ($\pm$ 1.73) & \phantom{0}8.14\% ($\pm$ 2.01)& \phantom{0}6.09\% ($\pm$ 1.79) & \phantom{0}5.04\% ($\pm$ 1.11)& \textbf{\phantom{0}4.67\%} ($\pm$ 0.99)\\
     \multirow{2}*{{contraceptive}}  & $0.05$ &  44.14\% ($\pm$ 2.80) & 42.86\% ($\pm$ 2.59)& 40.98\% ($\pm$ 0.95) & 40.00\% ($\pm$ 1.33) & \textbf{39.65\%} ($\pm$ 1.15) \\
     &   $ 0.20$ & 66.19\% ($\pm$ 5.97) & 43.49\% ($\pm$ 2.24)  & \textbf{42.71\%} ($\pm$ 1.47) &  \textbf{42.71\%} ($\pm$ 1.47) & \textbf{42.71\%} ($\pm$ 1.47) \\
     \multirow{2}*{{dermatology}}  & $ 0.05$ &  15.97\% ($\pm$ 2.64) & 16.46\% ($\pm$ 1.67) & 13.47\% ($\pm$ 1.97)& 12.78\% ($\pm$ 1.61) & \textbf{10.84\%} ($\pm$ 1.24)  \\
     &   $ 0.20$ & 30.07\% ($\pm$ 4.24) & 28.54\% ($\pm$ 3.25) & 21.53\% ($\pm$ 2.17) & 22.64\% ($\pm$ 2.15) & \textbf{20.21\%} ($\pm$ 1.58) \\
     \multirow{2}*{{ecoli}}  & $ 0.05$ & 16.30\% ($\pm$ 4.42) & 14.67\% ($\pm$ 5.13) & 13.26\% ($\pm$ 3.07) &  11.11\% ($\pm$ 5.52) & \textbf{\phantom{0}9.78\%} ($\pm$ 2.61)\\
     &   $ 0.20$ & 51.41\% ($\pm$ 3.37) & 42.67\% ($\pm$ 2.91) & 41.85\% ($\pm$ 2.95) & 39.70\% ($\pm$ 2.68) & \textbf{38.89\%} ($\pm$ 2.57)\\
     \multirow{2}*{{spambase}} & $0.05$ & 11.35\% ($\pm$ 0.77) & 10.23\% ($\pm$ 0.54) & 10.16\% ($\pm$ 0.56) & 9.83\% ($\pm$ 0.37) & \textbf{9.81\%} ($\pm$ 0.38)\\
     &   $0.20$ & 27.32\% ($\pm$ 2.11) & 15.83\% ($\pm$ 0.77) & 15.70\% ($\pm$ 0.76) & 15.67\% ($\pm$ 0.72) & \textbf{15.50\%} ($\pm$ 0.68)\\
     \multirow{2}*{{spect}}  & $0.05$ &  33.75\% ($\pm$ 5.17) & 29.69\% ($\pm$ 5.46) &  25.78\% ($\pm$ 3.06) & 25.47\% ($\pm$ 3.38) & \textbf{21.56\%} ($\pm$ 2.74) \\
     &  $0.20$ & 54.22\% ($\pm$ 9.88) & 37.5\% ($\pm$ 3.53) & 35.16\% ($\pm$ 2.47) & 33.75\% ($\pm$ 2.68) & \textbf{30.16\%} ($\pm$ 3.61) \\
     \multirow{2}*{{prim-tumor}}  & $0.05$ & 21.84\% ($\pm$ 4.55) & 20.81\% ($\pm$ 3.97) & 17.35\% ($\pm$ 3.59) & 16.18\% ($\pm$ 3.83) & \textbf{14.78\%} ($\pm$ 2.89) \\
     &   $0.20$ & 34.19\% ($\pm$ 6.17) & 25.37\% ($\pm$ 4.58)  & 21.62\% ($\pm$ 3.45) & 21.84\% ($\pm$ 3.34)  & \textbf{19.63\%} ($\pm$ 2.71) \\
\bottomrule
\end{tabular}
}
\end{table*}

Finally, we demonstrate that our theory, especially \texttt{DRO+ARO+Aux}, contributes to the DRO literature even without adversarial attacks. In this case of $\alpha = 0$, \texttt{ERM} and \texttt{ARO} would be equivalent, and \texttt{DRO+ARO} would reduce to the traditional DR LR model~\citep{NIPS2015}. \texttt{ARO+Aux} would be interpreted as revising the empirical distribution of ERM to a mixture (mixture weight cross-validated) of the empirical and auxiliary distributions. \texttt{DRO+ARO+Aux}, on the other hand, can be interpreted as DRO over a carefully reduced ambiguity set (intersection of the empirical and auxiliary Wasserstein balls). The results are in Table~\ref{table:UCI_no_attack}. Analogous results follow as before (that is, \texttt{DRO+ARO+Aux} is the `winning' approach, \texttt{DRO+ARO} and \texttt{ARO+Aux} alternate for the `second' approach), with the exception of the dataset \textit{contraceptive}, where \texttt{ARO+Aux} outperforms others.

\paragraph{Cross-validated Wasserstein radii.} In Corollary~\ref{corr:relax}, we discussed the feasibility of ignoring the auxiliary data when its data-generating distribution is distant from the true data-generating distribution. To examine whether large values of $\widehat{\varepsilon}$ are selected via cross-validation in our experiments, we investigated their histograms and observed that this indeed occurs frequently. For example, as shown in Table~\ref{table:UCI_no_attack}, when there are no adversarial attacks, \texttt{DRO+ARO} and \texttt{DRO+ARO+Aux} yield identical errors on the `annealing' dataset. This is because the two models become equivalent: \texttt{DRO+ARO+Aux} selects a large $\widehat{\varepsilon}$ (either $5$ or $10$), which ensures that the intersection of the Wasserstein balls reduces to the empirical distribution. With $\alpha = 0.05$, \texttt{DRO+ARO+Aux} selects a large $\widehat{\varepsilon}$ in 7 out of 10 simulations, while in the remaining 3 it selects a smaller value, resulting in a nontrivial intersection and, in turn, improved performance over \texttt{DRO+ARO}. We conclude that our method selects a nontrivial $\widehat{\varepsilon}$ only when there is evidence that the auxiliary data is \textit{useful}, that is, when its data-generating distribution is sufficiently close to the true one. For instance, on the `absent' dataset, we always have $\widehat{\varepsilon} \in \{10^{-2}, 10^{-1}\}$. We also revised our numerical experiments by modifying the Gaussian copula synthesizer to enforce uniform marginals (which results in significant information loss), and in this setting, our models consistently selected large $\widehat{\varepsilon}$ values.

\begin{table*}[!t]
\caption{Mean out-of-sample errors of UCI experiments without adversarial attacks.}
\label{table:UCI_no_attack}
\centering
\begin{tabular}{lcccccc}
\toprule
Data   &  \texttt{ERM}  &  \texttt{ARO} & \texttt{ARO+Aux} & \texttt{DRO+ARO} & \texttt{DRO+ARO+Aux} \\
\midrule 
        {absent} & 36.28\%  & 36.28\%  & 31.86\%  &  28.31\%  & \textbf{27.74\%}\\
     {annealing} & 10.61\%  & 10.61\%  &  \phantom{0}7.64\%  & \textbf{\phantom{0}7.14\%}  & \textbf{\phantom{0}7.14\%}  \\
  {audiology} & 14.94\%  & 14.94\% & 12.97\%  & 10.11\%  & \textbf{\phantom{0}7.69\%}  \\
      {breast-cancer} & \phantom{0}6.64\%  & \phantom{0}6.64\% & \phantom{0}5.22\% & \phantom{0}2.55\% & \textbf{\phantom{0}2.15\%}  \\
     {contraceptive} & 35.00\%  & 35.00\% & \textbf{33.75\%} & 34.56\% & 33.85\%  \\
     {dermatology} & 16.04\%  & 16.04\% & 11.60\% & \phantom{0}9.93\% & \textbf{\phantom{0}8.06\%} \\
      {ecoli} & \phantom{0}6.74\% & \phantom{0}6.74\% & \phantom{0}4.96\% & \phantom{0}5.19\% & \textbf{\phantom{0}4.37\%} \\
      spambase & \phantom{0}8.95\% & \phantom{0}8.95\% & \phantom{0}8.52\% & \phantom{0}8.34\% & \textbf{\phantom{0}8.16\%} \\
      spect & 30.74\% & 30.74\% & 24.69\% & 22.35\% & \textbf{18.75\%} \\ 
      {prim-tumor} & 22.79\% & 22.79\% & 17.28\% & 15.07\% & \textbf{13.97\%}\\
\bottomrule
\end{tabular}
\end{table*}

\paragraph{Different training/auxiliary dataset ratio.}
Recall that in the UCI experiments, we sampled $15\%$ of the original dataset as the training set, and used $45\%$ to generate a synthetic auxiliary dataset. This setup was chosen to simulate scenarios with limited training data, where overfitting is a particular concern. If we reverse these proportions and use $45\%$ of the original dataset for training and $15\%$ for generating auxiliary data instead, we find that the best-performing method remains \texttt{DRO+ARO+Aux}, and the relative ranking among the models remains qualitatively similar. One notable difference is that the improvement of the auxiliary-data-oblivious \texttt{DRO+ARO} method over the non-DR \texttt{ARO+Aux} model becomes less significant, and is even reversed in the case of $\alpha = 0.05$ on the `ecoli' dataset: \texttt{ERM} ($14.59\%$), \texttt{ARO} ($11.41\%$), \texttt{ARO+Aux} ($9.04\%$), \texttt{DRO+ARO} ($9.41\%$), \texttt{DRO+ARO+Aux} ($8.89\%$). As expected, since more data is drawn from the true data-generating distribution, all methods exhibit improved out-of-sample performance compared to the original setup. However, in this case, \texttt{DRO+ARO} no longer outperforms \texttt{ARO+Aux}.

\subsection{MNIST/EMNIST Experiments}\label{sec:new_referee}
The setting in the MNIST/EMNIST experiments is similar to that in the UCI experiments. However, for auxiliary data, we use the EMNIST dataset which we accessed via the MLDatasets package of Julia.

Moreover, in \S\ref{section:related} we reviewed the literature showing that when statistical error is not a concern, that is, when optimizing over the empirical distribution cannot cause overfitting (\textit{e.g.}, in high-data regimes), then adversarial training is equivalent to a type-$\infty$ Wasserstein DRO problem with radius $\varepsilon = \alpha$. Hence, a natural question is whether increasing the value of $\varepsilon$ further also provides distributional robustness. To this end, in Table~\ref{tab:mnist_2}, we revise Table~\ref{tab:mnist} and add an additional benchmark \texttt{DRO++}. Here, we take $\varepsilon = \alpha + \varepsilon'$, and cross-validate $\varepsilon'$ from the same grid that we cross-validated $\varepsilon$ for methods \texttt{DRO+ARO} and \texttt{DRO+ARO+Aux}. We observe that \texttt{DRO++} does not improve over \texttt{DRO+ARO}, which is expected given that type-$\infty$ Wasserstein DRO does not provide better generalizations, unlike type-$1$ Wasserstein DRO (\textit{cf.} the discussion at the end of \S\ref{section:prelim}). Yet, this method improves over \texttt{ARO} in all cases, and even over \texttt{ARO+Aux} in the no-attack or $\ell_\infty$-attack settings.

\begin{table}
\caption{Additional MNIST/EMNIST Benchmark.}
    \centering
\begin{tabular}{lccccccc}
\toprule
Attack   &  \texttt{ERM}  &  \texttt{ARO} & \texttt{ARO+Aux} & \texttt{DRO++} & \texttt{DRO+ARO} & \texttt{DRO+ARO+Aux} \\
\midrule 
    No attack ($\alpha = 0$) & \phantom{10}1.55\%  & \phantom{10}1.55\%  & \phantom{10}1.19\%  & \phantom{10}0.72\% &   \phantom{10}0.64\%  & \phantom{10}\textbf{0.53\%}\\
    $\ell_1$ ($\alpha = 68/255$) & \phantom{10}2.17\% & \phantom{10}1.84\% & \phantom{10}1.33\% & \phantom{10}1.40\% & \phantom{10}0.66\% & \phantom{10}\textbf{0.57\%} \\
    $\ell_2$ ($\alpha = 128/255$) & \phantom{1}99.93\% & \phantom{10}3.36\% & \phantom{10}2.54\% & \phantom{10}2.72\% & \phantom{10}2.40\% & \phantom{10}\textbf{2.12\%}  \\
    $\ell_\infty$ ($\alpha = 8/255$) & 100.00\% & \phantom{10}2.60\% & \phantom{10}2.38\% & \phantom{10}2.31\%  & \phantom{10}2.20\% & \phantom{10}\textbf{1.95\%}  \\
\bottomrule
\end{tabular}
\label{tab:mnist_2}
\end{table}

\subsection{Artificial Experiments}\label{appendix_artificial}
\paragraph{Data generation.} We sample a `true' $\bm{\beta}$ from a unit $\ell_2$-ball, and generate data as summarized in Algorithm~\ref{alg:artificial}. Such a dataset generation gives $N$ instances from the same true data-generating distribution. In order to obtain $\hat{N}$ auxiliary dataset instances, we perturb the probabilities $p^i$ with standard random normal noise which is equivalent to sampling i.i.d.\@ from a \textit{perturbed} distribution. Testing is always done on true data, that is, the test set is sampled according to Algorithm~\ref{alg:artificial}.
\begin{algorithm}[!h]
\caption{Data from a ground truth logistic classifier}
\textbf{Input:}{ set of feature vectors $\bm{x}^i, \ i \in [N]$; vector $\bm{\beta}$}
    \begin{algorithmic}
        \FOR{$i \in \{ 1, \ldots, N \}$}
            \STATE Find the 
            probability  
            $p^i = \left[ 1 + \exp (- \bm{\beta}^\top \bm{x}) \right]^{-1}.$
            \STATE Sample $u=\mathcal{U}(0,1)$
            \IF {$p^i \geq u$}
                \STATE $y^i = +1$
                \ELSE 
                \STATE $y^i = -1$
            \ENDIF
        \ENDFOR
    \end{algorithmic}
     \textbf{Output}: $(\bm{x}^i, y^i), \ i \in [N]$.
     \label{alg:artificial}
\end{algorithm}

\paragraph{Strength of the attack and importance of auxiliary data.} In the main paper we discussed how the strength of an attack determines whether using auxiliary data in ARO (\texttt{ARO+Aux}) or considering distributional ambiguity (\texttt{DRO+ARO}) is more effective, and observed that unifying them to obtain \texttt{DRO+ARO+Aux} yields the best results in all attack regimes. Now we focus on the methods that rely on auxiliary data, namely \texttt{ARO+Aux} and \texttt{DRO+ARO+Aux} and explore the importance of auxiliary data ${\widehat{\mathbb{P}}}_{\widehat{N}}$ in comparison to its empirical counterpart $\mb P_N$. Table \ref{table_CVd_values} shows the average values of $w$ for problem~\eqref{synth_literature} obtained via cross-validation. We see that the greater the attack strength is the more we should use the auxiliary data in \texttt{ARO+Aux}. The same relationship holds for the average of $\varepsilon / \widehat{\varepsilon}$ obtained via cross-validation in~\ref{synth}, which means that the relative size of the Wasserstein ball built around the empirical distribution gets larger compared to the same ball around the auxiliary data, that is, ambiguity around the auxiliary data is smaller than the ambiguity around the empirical data. We highlight as a possible future research direction exploring when a larger attack \textit{per se} implies the intersection 
will move towards the auxiliary data distribution.

\begin{table}
\caption{Mean $w$ in problem~\eqref{synth_literature} and $\varepsilon/\widehat{\varepsilon}$ in problem~\ref{synth} across 25 simulations of cross-validating $\omega$, $\varepsilon$, and $\widehat\varepsilon$.} \label{table_CVd_values}
\centering
\begin{tabular}{lcc}
\toprule
Attack & \texttt{ARO+Aux} (cross-validated $w$)  & \texttt{DRO+ARO+Aux} (cross-validated $\varepsilon/\widehat{\varepsilon}$)  \\
\midrule
$\alpha = 0\phantom{.00}$ & $0.002$ & $0.0120$ \\
$\alpha = 0.1\phantom{0}$ & $0.046$ & $0.172$ \\
            $\alpha = 0.25$ & $0.086$ & $0.232$\\
            $\alpha = 0.5\phantom{0}$ & $0.290$ & $0.241$\\
\bottomrule
\end{tabular}
\end{table}

\paragraph{More results on scalability.} We further simulate $25$ cases with an $\ell_2$-attack strength of $\alpha = 0.2$, $N = 200$ instances in the training dataset, $\widehat{N} = 200$ instances in the auxiliary dataset, and we vary the number of features $n$. We report the median ($50\% \pm 15\%$ quantiles shaded) runtimes of each method in Figure \ref{fig:appendix_runtimes}. 
The fastest methods are \texttt{ERM} and \texttt{ARO} among which the faster one depends on $n$ (as the adversarial loss includes a regularizer of $\bm{\beta}$), followed by \texttt{ARO+Aux}, \texttt{DRO+ARO}, and \texttt{DRO+ARO+Aux}, respectively. \texttt{DRO+ARO+Aux} is the slowest, which is expected given that \texttt{DRO+ARO} is its special for large $\widehat{\varepsilon}$. The runtime however scales graciously.
\begin{figure*}[!ht]
    \centering
    \includegraphics[width=0.495\textwidth]{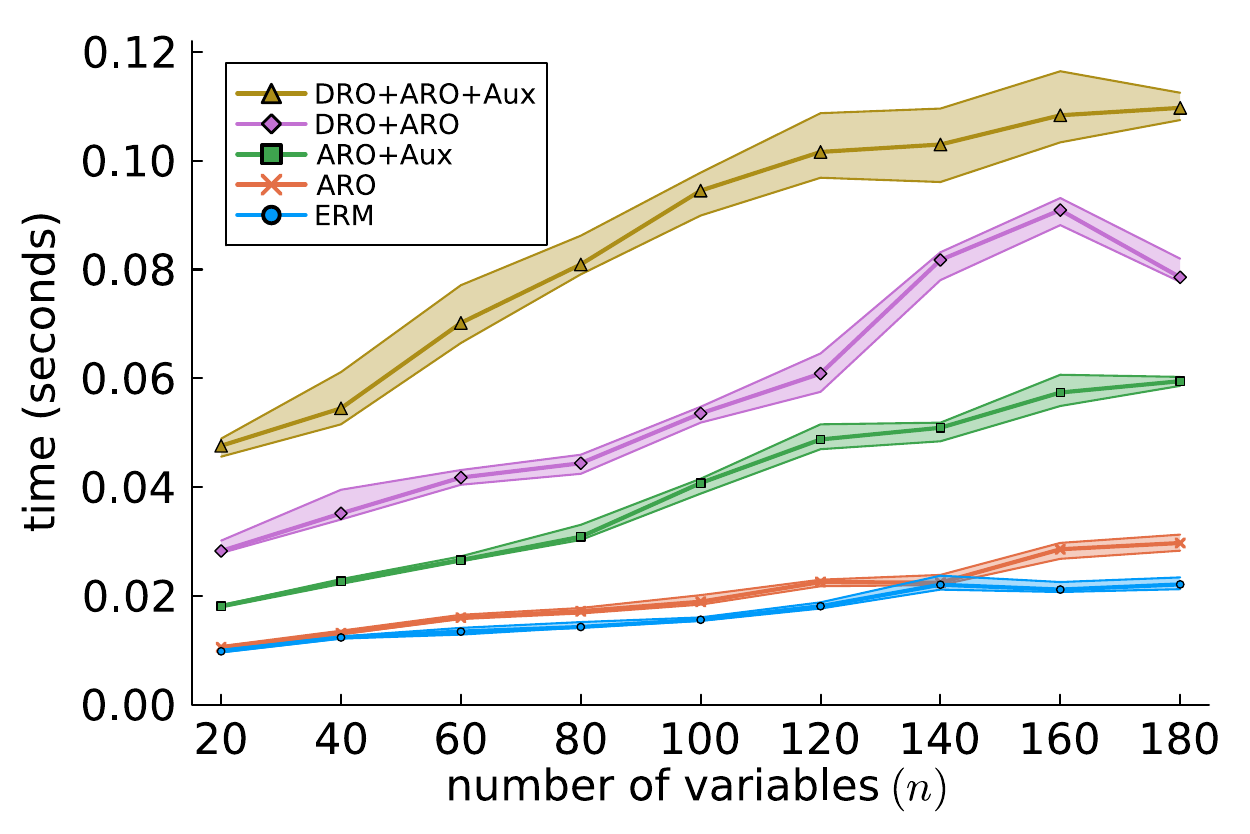} 
    \caption{\textit{Runtimes under a varying number of features in the artificially generated empirical and auxiliary datasets.}}
    \label{fig:appendix_runtimes}
\end{figure*}

Finally, we focus further on \texttt{DRO+ARO+Aux} which solves problem~\ref{synth} with $\mathcal{O}(n \cdot N \cdot \widehat{N})$ variables and exponential cone constraints. For $n = 1,000$ and $N = \widehat{N} = 10,000$, we observe that the runtimes vary between 134 to 232 seconds across 25 simulations.

\end{document}